\numberwithin{equation}{section}
\newcommand*{\rom}[1]{\expandafter\@slowromancap\romannumeral #1@}
\theoremstyle{plain}
\newtheorem{theorem}{Theorem}[section]
\newtheorem{Lemma}[theorem]{Lemma}
\newtheorem{Cor}[theorem]{Corollary}
\newtheorem{Prop}[theorem]{Proposition}
\theoremstyle{definition}
\newtheorem{Rem}[theorem]{Remark}
\newtheorem{?}[theorem]{Problem}
\newtheorem{exa}[theorem]{Example}
\begin{document}
	\title[Multivariate Archimedean copulas and their conditional copulas]{On convergence and singularity of conditional copulas of multivariate Archimedean copulas, and estimating conditional dependence}
	\author[]{Thimo M. Kasper}
	\address{Thimo M. Kasper \newline \hspace*{0.3cm} University of Salzburg \newline\hspace*{0.3cm} Department for Artificial Intelligence and Human Interfaces \newline\hspace*{0.3cm} Hellbrunnerstrasse 34, 5020 Salzburg}
	\email{thimo.kasper@plus.ac.at}
	
	\begin{abstract} 
	The present contribution derives an explicit expression for (a version of) every uni- and multivariate conditional distribution (i.e., Markov kernel) of Archimedean copulas and uses this representation to generalize a recently established result, saying that in the class of multivariate Archimedean copulas standard uniform convergence implies weak convergence of almost all univariate Markov kernels, to arbitrary multivariate Markov kernels.
	Moreover, we prove that an Archimedean copula is singular if, and only if, almost all uni- and multivariate Markov kernels are singular.
	These results are then applied to conditional Archimedean copulas which are reintroduced largely from a Markov kernel perspective and it is shown that convergence, singularity and conditional increasingness carry over from Archimedean copulas to their conditional copulas.
	As consequence the surprising fact is established that estimating (the generator of) an Archimedean copula directly yields an estimator of (the generator of) its conditional copula. Building upon that, we sketch the use and estimation of a conditional version of a recently introduced dependence measure as alternative to well-known conditional versions of association measures in order to study the dependence behaviour of Archimedean models when fixing covariate values.
	\end{abstract}
	
	\maketitle
%
	
	\section{Introduction}
	\label{sec:1:intro}
	
	Archimedean copulas pose a popular class of dependence structures. As the class includes various parametric families covering a wide range of dependence models, Archimedean copulas are applied in numerous fields such as hydrology (see, e.g., \cite{archimedeanHydro}), finance (see, e.g., \cite{archimedeanFinance}) or actuarial sciences (see, e.g., \cite{archimedeanActuar}) among many others. Archimedean copulas are also attractive from a theoretical point of view as they enjoy the property of being generated by a single univariate function $\psi$ which fully characterizes analytic and dependence aspects of Archimedean copulas (\cite{convArch,Principles, p21, wcc, convMultiArch,multiArchNeslehova, Nelsen}). The current paper is particularly concerned with convergence as well as singularity of multivariate Archimedean copulas and it turns out that studying them via their conditional distributions (i.e., Markov kernels) leads to simple, alternative derivations of important results in the field (see \cite{p21,wcc,multiArchMassKernel,convMultiArch}). In the two-dimensional case it is well-known that within the class of multivariate Archimedean copulas pointwise convergence of the copulas coincides with pointwise convergence of the corresponding generators as well as with weak convergence of almost all Markov kernels (\cite{wcc}) while in \cite{convMultiArch} it was recently shown that pointwise convergence of \emph{multivariate} Archimedean copulas implies convergence of almost all univariate Markov kernels. 
	
	In this contribution we derive an explicit expression for all uni- and multivariate Markov kernels of Archimedean copulas, hence allowing for an arbitrary number of conditioning coordinates. Utilizing this representation we prove that, in addition to several well-known characterizing properties, pointwise convergence of Archimedean copulas is equivalent to weak convergence of almost all $(d-1)$-dimensional Markov kernels and that any of these properties implies weak convergence of almost all uni- and multivariate Markov kernels. Extending \cite{convMultiArch} in another aspect, we show the surprising fact that an Archimedean copula is singular w.r.t. $\lambda_d$ if, and only if, almost all uni- and multivariate conditional distributions are singular w.r.t. the appropriate Lebesgue measure. 
	
	These results are then applied to conditional Archimedean copulas. In general, conditional copulas allow for a means to study the dependence of a random vector when influenced by a (vector of) covariate(s). In \cite{gijbels_partial_copulas,gijbels_conditionalCop}, for instance, the authors study the estimation of conditional copulas from data and define conditional versions of the concordance measures Kendall's tau and Spearman's rho. Conditional copulas of Archimedean copulas, in particular, are studied, e.g., in \cite{ConditionalCopula, conditionalCop_Sungur}. In \cite{ConditionalCopula} the authors study tail behaviour, conditional versions of tail dependence indices, positive dependence and show the surprising fact that the conditional copula of an Archimedean copula is again Archimedean (also called the truncation invariance property). In the present paper we will reintroduce conditional copulas largley from a Markov kernel perspective, provide an alternative proof of the afore-mentioned truncation invariance property and demonstrate that the Markov kernel representation allows for several beautiful relations concerning conditional Archimedean generators. Using our established results we will then show that convergence, singularity and conditional increasingness carry over from the original Archimedean copula to its conditional copula. As consequence we obtain the surprising fact that for estimating the conditional copula of an Archimedean copula $C$ it suffices to estimate (the generator of) $C$ which can be done using well-known methods (see, e.g., \cite{GenestInference, estimationArch_hering, estimationArch_knownMarginals} and the references therein). Going a step further and utilizing this observation we sketch the use and estimation of a conditional version of the multivariate dependence measure $\zeta_1$ (\cite{qmd}) as a new tool to investigate dependence behaviour of Archimedean data given covariate values, thus providing a potentially powerful alternative to the well-known conditional Kendall's tau (\cite{gijbels_conditionalCop}) or general tail concentration functions (see, e.g., \cite{Durante_tail_dep_diagonal_201522}).

	The remainder of this paper is organized as follows: Section~\ref{sec:2:preliminaries} gathers notation and preliminaries that are used throughout the text. Focussing on Archimedean copulas, Section~\ref{sec:kernel:archimedean} derives an explicit expression for (a version of) every uni- and multivariate Markov kernel. This representation is then used in Section~\ref{sec:convergence:D1} to characterize pointwise/uniform convergence of Archimedean copulas in terms of their $(d-1)$-dimensional Markov kernels and to show that, in this setting, uniform convergence even implies weak convergence of almost all uni- \emph{and} multivariate Markov kernels. The main result of Section~\ref{sec:singularity} is that singularity of Archimedean copulas is equivalent to singularity of almost all uni- and multivariate Markov kernels. Following up on an introduction to conditional copulas of Archimedean copulas via Markov kernels, in Section~\ref{sec:conditional:copulas} we prove that convergence as well as singularity transfers from Archimedean copulas to their conditional copulas, and in addition, sketch the use and estimation of a conditional version of the dependence measure $\zeta_1$ to obtain additional insights into the dependence behaviour of Archimedean data given covariate values. We conclude the paper by remarking that, at last, conditional increasingness carries over to the conditional Archimedean copulas, too. 
	
	\section{Notation and preliminaries}
	\label{sec:2:preliminaries}
	
	This paper considers the class $\mathcal{C}^d$ of all $d$-dimensional copulas for some integer $d\geq2$. Given $C\in\mathcal{C}^d$ the corresponding $d$-stochastic measure will be denoted by $\mu_C$, i.e., 
	$\mu_C([\mathbf{0},\mathbf{x}]) = C(\mathbf{x})$ for all $\mathbf{x} \in \mathbb{I}^d$, where $\mathbb{I} := [0,1]$ and $[\mathbf{0},\mathbf{x}] := [0,x_1] \times \ldots\times [0,x_d]$. 
	Considering $1\leq i < j \leq d$, the $i$-$j$-marginal $C^{ij}$ of $C$ is given by $C^{ij}(x_i, x_j) = C(1,\ldots, 1,x_i,1,\ldots,1,x_j,1,\ldots,1)$ and similarly for $1<\ell<d$ the marginal copula $C^{1:\ell}$ of the first $\ell$ coordinates we have $C^{1:\ell}(\mathbf{x}) = C(x_1,x_2,\ldots, x_\ell, 1,\ldots, 1)$ for every $\mathbf{x}\in\mathbb{I}^\ell$. The uniform metric on $\mathcal{C}^d$ is defined by
	\begin{align*}
		d_\infty(C_1,C_2) := \max_{\mathbf{x}\in\mathbb{I}^d}\vert C_1(\mathbf{x}) - C_2(\mathbf{x})\vert
	\end{align*} 
	and it is well-known that $(\mathcal{C}^d, d_\infty)$ is a compact metric space. For more background on multivariate copulas and $d$-stochastic probability measures we refer to \cite{Nelsen, Principles}. 
	
	For every metric space $(\mathbb{S},d_\mathbb{S})$ the Borel $\sigma$-field on $\mathbb{S}$ will be denoted by $\mathcal{B}(\mathbb{S})$ and we write $\lambda_{d}$ for the Lebesgue measure on $\mathcal{B}(\mathbb{I}^d)$. In the sequel Markov kernels will play the key role. For $1<\ell<d$ an $\ell$-\emph{Markov kernel} from $\mathbb{R}^\ell$ to $\mathbb{R}^{d-\ell}$ is a mapping $K: \mathbb{R}^\ell\times\mathcal{B}(\mathbb{R}^{d-\ell}) \rightarrow \mathbb{I}$ such that for every fixed $E\in\mathcal{B}(\mathbb{R}^{d-\ell})$ the mapping $\mathbf{x}\mapsto K(\mathbf{x},E)$ is $\mathcal{B}(\mathbb{R}^{\ell})$-$\mathcal{B}(\mathbb{R}^{d-\ell})$-measurable and for every fixed $\mathbf{x}\in\mathbb{R}^\ell$ the mapping $E\mapsto K(\mathbf{x},E)$ is a probability measure on $\mathcal{B}(\mathbb{R}^{d-\ell})$. Given a $(d-\ell)$-dimensional random vector $\mathbf{Y}$ and a
	$\ell$-dimensional random vector $\mathbf{X}$ on a probability space
	$(\Omega, \mathcal{A}, \mathbb{P})$
	we say that a Markov kernel $K$ is a regular conditional distribution of
	$\mathbf{Y}$ given $\mathbf{X}$ if $K(\mathbf{X}(\omega), E) = \mathbb{E}(\mathbf{1}_E \circ \mathbf{Y} | \mathbf{X})(\omega)$ holds $\mathbb{P}$-almost surely for every $E \in
	\mathcal{B}(\mathbb{R}^{d-\ell})$. It is well-known that for each pair of random vectors $(\mathbf{X}, \mathbf{Y})$ as above, a regular conditional distribution $K(\cdot, \cdot)$ of $\mathbf{Y}$ given $\mathbf{X}$ always exists and is unique for $\mathbb{P}^{\mathbf{X}}$-a.e. $\mathbf{x} \in \mathbb{R}^\ell$
	whereby $\mathbb{P}^{\mathbf{X}}$ denotes the push-forward of $\mathbb{P}$ via $\mathbf{X}$.
	In case $(\mathbf{X}, \mathbf{Y})$ has distribution function $H$ we let $K_{H}:\mathbb{R}^\ell \times \mathcal{B}(\mathbb{R}^{d-\ell}) \to \mathbb{I}$ denote (a version of) the regular conditional distribution of $\mathbf{Y}$ given $\mathbf{X}$ and refer to it as $\ell$-\emph{Markov kernel} of $H$.
	Defining the $\mathbf{x}$-section of a set $G \in\mathcal{B}(\mathbb{R}^d)$ w.r.t. the first $\ell$ coordinates by $G_{\mathbf{x}}:=\{\mathbf{y}
	\in \mathbb{R}^{d-\ell}: (\mathbf{x},\mathbf{y}) \in G\}\in\mathcal{B}(\mathbb{R}^{d-\ell})$ the so-called \emph{disintegration theorem} implies
	\begin{align*}
		\mu_H(G) = \int_{\mathbb{R}^\ell} K_{H}(\mathbf{x},G_{\mathbf{x}})
		\ \mathrm{d}\mu_{H^{1:\ell}}(\mathbf{x}).
	\end{align*}
	For more background on conditional expectation and disintegration we refer to \cite{Kallenberg} and \cite{Klenke}. 
	
	Markov kernels allow to define metrics stronger than $d_\infty$. Indeed, following \cite{p06} and defining for two copulas $A,B\in\mathcal{C}^d$
	\begin{align*}
		D_1(A,B) &:= \int_{\mathbb{I}^d}|K_A(x,[\mathbf{0},\mathbf{y}]) - K_B(x,[\mathbf{0},\mathbf{y}])| \ \mathrm{d}\lambda_d(x,\mathbf{y}), \\
		D_2^2(A,B) &:= \int_{\mathbb{I}^d}(K_A(x,[\mathbf{0},\mathbf{y}]) - K_B(x,[\mathbf{0},\mathbf{y}]))^2 \ \mathrm{d}\lambda_d(x,\mathbf{y}), \\
		D_\infty(A,B) &:= \sup_{\mathbf{y}\in\mathbb{I}^{d-1}} \int_{\mathbb{I}} |K_A(x,[\mathbf{0},\mathbf{y}]) - K_B(x,[\mathbf{0},\mathbf{y}])| \ \mathrm{d}\lambda_1(x)
	\end{align*}
	yield $1$-Markov kernel based metrics on $\mathcal{C}^d$ and it is well-known that convergence w.r.t. either of these metrics implies uniform convergence (and not vice versa).
	
	In this paper we will mainly consider the class $\mathcal{C}^d_\text{ar}$ of Archimedean copulas. Following \cite{multiArchNeslehova}, $C\in\mathcal{C}^d_\text{ar}$ if there exists a continuous, non-increasing function $\psi:[0,\infty)\to\mathbb{I}$ fulfilling $\psi(0)=1,\lim_{z\to\infty}\psi(z) = 0$ and being strictly decreasing on $[0,\inf\{z\in[0,\infty]: \psi(z) = 0\}]$ (where we set $\psi(\infty) = 0$) such that for every $\mathbf{x}\in\mathbb{I}^d$ we have
	\begin{align*}
		C(\mathbf{x}) = \psi(\varphi(x_1) + \cdots + \varphi(x_d)),
	\end{align*}
	where $\varphi:(0,1]\to[0,\infty)$ denotes the pseudoinverse of $\psi$. Setting $\varphi(0) = \inf\{z\in[0,\infty]: \psi(z) = 0\}$ the function $\varphi$ is strictly decreasing and satisfies $\varphi(1) = 0$. In case of $\varphi(0) = \infty$ we call $C$ strict and non-strict otherwise. Conversely, the function $\psi(\varphi(x_1) + \cdots + \varphi(x_d))$ for $\mathbf{x}\in\mathbb{I}^d$ returns a $d$-dimensional copula if, and only if, $\psi$ is $d$-monotone on $[0,\infty)$ meaning that $\psi$ is continuous on $[0,\infty)$, has derivatives up to order $(d-2)$ on $(0,\infty)$, fulfills $(-1)^m\psi^{(m)}(z) \geq 0$ for every $z\in(0,\infty)$, $m=0,1,\ldots,d-2$, and it holds that $(-1)^{d-2}\psi^{(d-2)}$ is non-increasing and convex on $(0,\infty)$. Thereby $\psi^{(m)}$ denotes the $m$-th derivative of $\psi$. Letting $D^-\psi$ ($D^+\psi$) denote the left (right) derivative of $\psi$ and considering convexity of $(-1)^{d-2}\psi^{(d-2)}$ we have $D^-\psi = D^+\psi$ almost everywhere on $(0,\infty)$. As direct consequence the set of discontinuity points $(0,\infty)\setminus$ Cont$(D^-\psi^{(d-2)})$ of $D^-\psi^{(d-2)}$ is at most countably infinite and it follows directly from the properties of $\psi$ that $\lim_{z\to\infty} D^-\psi^{(d-2)}(z) = 0$ holds (see \cite{convMultiArch}). As generators of Archimedean copulas are not unique, from now on we will assume that generators are normalized in the sense that $\varphi(\frac{1}{2}) = 1$, or equivalently, $\psi(1) = \frac{1}{2}$. 
	
	\section{Markov kernels of multivariate Archimedean copulas}
	\label{sec:kernel:archimedean}
	It is well-known (cf. \cite{multiArchNeslehova}) that in case of $C\in\mathcal{C}_\text{ar}^d$ being absolutely continuous (a version of) its density is given by
	\begin{align}\label{eq:arch:density}
		c(\mathbf{x}) = \mathds{1}_{(0,1)^d}(\mathbf{x}) \prod_{i=1}^{d}\varphi'(x_i) \cdot D^-\psi^{(d-1)}\big( \varphi(x_1)+\cdots +\varphi(x_d) \big)
	\end{align} 
	and that all lower dimensional marginals of Archimedean copulas are absolutely continuous. Combining the latter property with Theorem \ref{th:interkernel}, Lemma \ref{lem:partial:increasing} in \ref{sec:kernel:representation} and \cite[Theorem 3.1]{convMultiArch} allows to derive an explicit expression for (a version of) \emph{every} Markov kernel of an Archimedean copula. To that end, define the zero set $L_0$ of $C\in\mathcal{C}^d_\text{ar}$ as
	\begin{align*}
		L_0 &= \{(\mathbf{x},\mathbf{y})\in\mathbb{I}^\ell\times\mathbb{I}^{d-\ell}: C(\mathbf{x},\mathbf{y}) = 0\} \\ &=\left\{(\mathbf{x},\mathbf{y})\in\mathbb{I}^\ell\times\mathbb{I}^{d-\ell}: \sum_{i=1}^{\ell}\varphi(x_i) + \sum_{j=1}^{d-\ell}\varphi(y_i) \geq \varphi(0)\right\}.
	\end{align*}
	In what follows we will work with the zero level set of the $\ell$-dimensional marginal of $C$ which is defined analogously and denoted by $L_0^{1:\ell}$. Writing int$S$ for the interior of a set $S$, notice that $(\mathbf{x},\mathbf{y})\in$ int$L_0$ if, and only if, $\sum_{i=1}^{\ell}\varphi(x_i) + \sum_{j=1}^{d-\ell}\varphi(y_j) > \varphi(0)$ and that for strict $C\in\mathcal{C}^d_\text{ar}$ we have $\mathbf{x}\in L_0^{1:\ell}$ if, and only if, $\min(\mathbf{x}) = 0$
	
	\begin{theorem}\label{th:interkernel:arch}
		Suppose that $C_\psi$ is a $d$-dimensional Archimedean copula with generator $\psi$. Then for $\ell\in\{1,\ldots,d-1\}$ setting
		\begin{align}\label{eq:interkernel:arch}
			K_\psi(\mathbf{x}, [\mathbf{0},\mathbf{y}]) = \begin{cases}
				1 & \emph{ if } \min(\mathbf{x}) =1 \emph{ or } \mathbf{x}\in L_0^{1:\ell} \\
				0 & \emph{ if } \min(\mathbf{x})\in (0,1), \mathbf{x}\not\in L_0^{1:\ell}, (\mathbf{x}, \mathbf{y})\in\emph{int}L_0 \\
				\frac{\psi^{(\ell)}\left( \sum_{i=1}^\ell \varphi(x_i) + \sum_{j=1}^{d-\ell}\varphi(y_i) \right)}{\psi^{(\ell)}\left( \sum_{i=1}^\ell \varphi(x_i) \right)} & \emph{ if } \min(\mathbf{x})\in (0,1), \mathbf{x}\not\in L_0^{1:\ell}, (\mathbf{x}, \mathbf{y})\not\in\emph{int}L_0
			\end{cases}
		\end{align}
		yields (a version of) the $\ell$-Markov kernel of $C_\psi$.
	\end{theorem}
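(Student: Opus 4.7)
The plan is to identify the right-hand side of~(\ref{eq:interkernel:arch}) as the ratio of mixed partial derivatives provided by the general Markov-kernel formula (Theorem~\ref{th:interkernel}) applied to the pair $(C_\psi,C^{1:\ell})$. The key preparatory observation is that, since $\psi$ is $d$-monotone (in particular $\ell$-monotone), the marginal $C^{1:\ell}$ is itself Archimedean with the same generator $\psi$ and, by~(\ref{eq:arch:density}) in dimension $\ell$, absolutely continuous with density
\[
c^{1:\ell}(\mathbf{x})=\mathds{1}_{(0,1)^\ell}(\mathbf{x})\Bigl(\prod_{i=1}^{\ell}\varphi'(x_i)\Bigr)D^-\psi^{(\ell-1)}\!\Bigl(\sum_{i=1}^{\ell}\varphi(x_i)\Bigr).
\]
Hence Theorem~\ref{th:interkernel} applies and reduces the task to evaluating the almost-everywhere ratio of $\partial^\ell C_\psi/\partial x_1\cdots\partial x_\ell$ and $\partial^\ell C^{1:\ell}/\partial x_1\cdots\partial x_\ell$.

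The computation then splits along the three cases of~(\ref{eq:interkernel:arch}). For $\min(\mathbf{x})\in(0,1)$, $\mathbf{x}\notin L_0^{1:\ell}$ and $(\mathbf{x},\mathbf{y})\notin \mathrm{int}\,L_0$, iterated chain-rule differentiation yields $\psi^{(\ell)}(\sum_i\varphi(x_i)+\sum_j\varphi(y_j))\prod_i\varphi'(x_i)$ in the numerator and the same expression without the $\mathbf{y}$-sum in the denominator, giving the third case. For $(\mathbf{x},\mathbf{y})\in\mathrm{int}\,L_0$ with $\mathbf{x}\notin L_0^{1:\ell}$, openness of $\mathrm{int}\,L_0$ together with $C_\psi\equiv 0$ on $L_0$ force the numerator partial to vanish while the denominator is strictly positive, producing the second case. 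On $\{\mathbf{x}\in L_0^{1:\ell}\}\cup\{\min(\mathbf{x})=1\}$ the conditioning distribution $\mu_{C^{1:\ell}}$ assigns mass zero (the density factor $D^-\psi^{(\ell-1)}$ vanishes on $L_0^{1:\ell}$, and $\{(1,\ldots,1)\}$ is a single point), so the kernel may be defined arbitrarily there; the value $1$ is chosen so that $K_\psi(\mathbf{x},\cdot)$ is a genuine probability measure on $\mathbb{I}^{d-\ell}$ for every $\mathbf{x}\in\mathbb{I}^\ell$.

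To finish I would verify that the map defined in~(\ref{eq:interkernel:arch}) is indeed a Markov kernel. Joint Borel measurability in $\mathbf{x}$ follows from continuity of $\varphi$ and Borel measurability of $\psi^{(\ell)}$, the latter read as $D^-\psi^{(\ell-1)}$ when $\ell=d-1$. Monotonicity of $\mathbf{y}\mapsto K_\psi(\mathbf{x},[\mathbf{0},\mathbf{y}])$ and the correct normalization as $\mathbf{y}\to(1,\ldots,1)$ are supplied by Lemma~\ref{lem:partial:increasing} together with the limit $\lim_{z\to\infty}D^-\psi^{(d-2)}(z)=0$ recorded in Section~\ref{sec:2:preliminaries}.

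The main obstacle will be the borderline case $\ell=d-1$: there $\psi^{(d-1)}$ exists only almost everywhere, so the chain-rule computation cannot be carried out pointwise. I would resolve this by appealing to~\cite[Theorem~3.1]{convMultiArch}, which controls the null set of non-differentiability of $D^-\psi^{(d-2)}$ and legitimates replacing $\psi^{(d-1)}$ by $D^-\psi^{(d-2)}$ in the ratio, so that~(\ref{eq:interkernel:arch}) still yields a valid version of the $(d-1)$-Markov kernel $\mathbb{P}^{\mathbf{X}}$-almost surely.
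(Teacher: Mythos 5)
Your proposal is correct in spirit but follows a genuinely different route from the paper. You take Theorem~\ref{th:interkernel} as the primary engine: it identifies the kernel with the almost-everywhere ratio of $\ell$-th mixed partials, and your job is then to evaluate that ratio by the chain rule and to confirm that the resulting formula defines a bona fide Markov kernel. The paper, by contrast, does not lean on Theorem~\ref{th:interkernel} to identify the kernel; it instead (i) verifies directly that $\mathbf{y}\mapsto K_\psi(\mathbf{x},[\mathbf{0},\mathbf{y}])$ is a $(d-\ell)$-dimensional distribution function for \emph{every} $\mathbf{x}$ (Lemma~\ref{lem:partial:increasing} is used only to check $(d-\ell)$-increasingness case by case), then (ii) proves the disintegration identity $\int_{[\mathbf{0},\mathbf{x}]} K_\psi(\mathbf{s},[\mathbf{0},\mathbf{y}])\,\mathrm{d}\mu_{C^{1:\ell}_\psi}(\mathbf{s}) = C_\psi(\mathbf{x},\mathbf{y})$ by an explicit iterated integration and change of variables that collapses $\psi^{(\ell)}$ down to $\psi$. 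The paper's computation is longer but entirely self-contained and avoids a subtlety that your route glosses over: Theorem~\ref{th:interkernel} gives, for each \emph{fixed} $\mathbf{y}$, a full-measure set $\Gamma^{\mathbf{y}}$ on which the derivative formula holds, but to conclude that the formula you wrote is ``a version of the Markov kernel'' one must still pass from pointwise-in-$\mathbf{y}$ a.e.\ agreement to agreement of the two kernels as measures (intersect $\Gamma^{\mathbf{y}}$ over a countable dense set of $\mathbf{y}$, then use that both sides are probability measures); this bridge should be made explicit. You also correctly flag the borderline $\ell=d-1$ and the need to read $\psi^{(\ell)}$ as $D^-\psi^{(d-2)}$ there, which the paper handles in the same spirit by reference to \cite[Theorem~3.1]{convMultiArch}. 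One minor imprecision: $\mu_{C^{1:\ell}}(L_0^{1:\ell})=0$ follows because the zero set of an Archimedean copula is $\mu_{C^{1:\ell}}$-null, not because the density factor vanishes on all of $L_0^{1:\ell}$ (it need not vanish on the boundary); the conclusion is unaffected since the boundary is $\lambda_\ell$-null and $\mu_{C^{1:\ell}}$ is absolutely continuous.
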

	\begin{proof}
		For $\min(\mathbf{x})\in\{0,1\}$ or $\mathbf{x}\in$ $L_0^{1:\ell}$ the mapping $F_\mathbf{x}:$ $\mathbf{y}\mapsto K_\psi(\mathbf{x},[\mathbf{0},\mathbf{y}])$ is obviously a $(d-\ell)$-dimensional distribution function so fix $\mathbf{x}\not\in$ $L_0^{1:\ell}, \min(\mathbf{x})\in(0,1)$. First notice that right-continuity of $F_\mathbf{x}$ in every coordinate is a consequence of the continuity of $\psi^{(\ell)}$ (if $\ell=d-1$, $\psi^{(\ell)}$ is replaced be the left-hand derivative $D^-\psi^{(d-2)}$ whose left-continuity yields right-continuity of $F_\mathbf{x}$). Concerning $(d-\ell)$-increasingness suppose  $\mathbf{a},\mathbf{b}\in\mathbb{I}^{d-\ell}$ such that $b_j\geq a_j$ for $j=1,\ldots, d-\ell$. As in the previous lemma we have to verify that $V_{F_\mathbf{x}}((\mathbf{a},\mathbf{b}]) \geq 0$ holds.
		We distinguish the following cases:
		(i) $(\mathbf{x},\mathbf{a}), (\mathbf{x}, \mathbf{b}) \not\in$ int$L_0$. Then $F_\mathbf{x}(\mathbf{y}) = \frac{\partial^\ell C(\mathbf{x},\mathbf{y})}{\partial x_1\partial x_2\cdots \partial x_\ell} \cdot c^{1:\ell}(\mathbf{x})^{-1}$ and Lemma \ref{lem:partial:increasing} yields $(d-\ell)$-increasingness. (ii) $(\mathbf{x},\mathbf{b})\in$ int$L_0$. Then also $(\mathbf{x},\mathbf{a})\in$ int$L_0$ so that $V_{F_\mathbf{x}}((\mathbf{a},\mathbf{b}]) = 0$ holds. (iii) $(\mathbf{x},\mathbf{a})\in$ int$L_0$ and $(\mathbf{x},\mathbf{b})\not\in$ int$L_0$. Whenever a vertex $\mathbf{v}$ of $(\mathbf{a},\mathbf{b}]$ fulfills $\mathbf{v}\in$ int$L_0$ we have $\sum_{i=1}^{\ell}\varphi(x_i) + \sum_{j=1}^{d-\ell}\varphi(v_j) > \varphi(0)$ and hence $F_\mathbf{x}(\mathbf{v}) = 0$. 
		Moreover, we already know that $\psi^{(\ell)}\left(\sum_{i=1}^\ell \varphi(x_i) + \sum_{j=1}^{d-\ell}\varphi(v_j)\right) = 0$ holds too, so again we have
		$$ \setlength{\abovedisplayskip}{7pt}
			\setlength{\belowdisplayskip}{7pt}
			F_\mathbf{x}(\mathbf{v}) = 0 = \frac{\partial^\ell C(\mathbf{x},\mathbf{v})}{\partial x_1\partial x_2\cdots \partial x_\ell} \cdot c^{1:\ell}(\mathbf{x})^{-1}
		$$
		and Lemma \ref{lem:partial:increasing} yields the desired property.
	
		Altogether, for every fixed $\mathbf{x}\in\mathbb{I}^\ell$ the map $F_\mathbf{x}: \mathbf{y}\mapsto K_\psi(\mathbf{x},[\mathbf{0},\mathbf{y}])$ is a $(d-\ell)$-dimensional distribution function and it follows that $K_\psi(\mathbf{x},\cdot)$ is a probability measure on $\mathcal{B}(\mathbb{I}^{d-\ell})$.
		On the other hand, for every fixed $\mathbf{y}\in\mathbb{I}^{d-\ell}$ the mapping $\mathbf{x}\mapsto K_\psi(\mathbf{x},[\mathbf{0},\mathbf{y}])$ is measurable. Since the collection $\{ F\in\mathcal{B}(\mathbb{I}^{d-\ell}): \mathbf{x}\mapsto K_\psi(\mathbf{x},F) \text{ is measurable} \}$ is a Dynkin system and contains a generating class of $\mathcal{B}(\mathbb{I}^{d-\ell})$ it follows that $\mathbf{x}\mapsto K_\psi(\mathbf{x},E)$ is measurable for every $E\in\mathcal{B}(\mathbb{I}^{d-\ell})$. Consequently, $K_\psi(\cdot, \cdot)$ is a Markov kernel from $\mathbb{I}^\ell$ to $\mathbb{I}^{d-\ell}$ and it remains to show that $K_\psi(\cdot, \cdot)$ is an $\ell$-Markov kernel of $C_\psi$ which can be done in a similar manner as in \cite[Theorem 3.1]{convMultiArch} (included hereafter for the sake of completeness): We have to prove the identity
		$$
			\setlength{\abovedisplayskip}{7pt}
			\setlength{\belowdisplayskip}{7pt}
			\int_{[\mathbf{0},\mathbf{x}]} K_\psi(\mathbf{s},[\mathbf{0},\mathbf{y}]) \ \mathrm{d}\mu_{C^{1:\ell}_\psi}(\mathbf{s}) = C_\psi(\mathbf{x},\mathbf{y})
		$$
		for every $(\mathbf{x},\mathbf{y})\in\mathbb{I}^\ell\times\mathbb{I}^{d-\ell}$.
		If $\min(\mathbf{y}) = 0$ we have $(\mathbf{x},\mathbf{y})\in L_0$ whereas $\min(\mathbf{y}) = 1$ gives $
			\int_{[\mathbf{0},\mathbf{x}]} K_\psi(\mathbf{s},[\mathbf{0},\mathbf{1}]) \ \mathrm{d}\mu_{C^{1:\ell}_\psi}(\mathbf{s}) = \mu_{C^{1:\ell}}([\mathbf{0},\mathbf{x}]) = C_\psi(\mathbf{x},\mathbf{1})
		$
		so fix $\mathbf{y}\in(\mathbf{0},\mathbf{1})$. As $\mu_{C^{1:\ell}_\psi}(L_0^{1:\ell}) = 0$ and considering absolute continuity of $\mu_{C^{1:\ell}}$ we get
		\begin{align*}
			(\star) &:= \int_{[\mathbf{0},\mathbf{x}]} K_\psi(\mathbf{s},[\mathbf{0},\mathbf{y}]) \ \mathrm{d}\mu_{C^{1:\ell}_\psi}(\mathbf{s}) = \int_{[\mathbf{0},\mathbf{x}]\setminus L_0^{1:\ell}} K_\psi(\mathbf{s},[\mathbf{0},\mathbf{y}]) c^{1:\ell}(\mathbf{s}) \ \mathrm{d}\lambda_\ell(\mathbf{s}) \\
			&= \int_{[\mathbf{0},\mathbf{x}]\cap \{\mathbf{t}\in\mathbb{I}^\ell\setminus L_0^{1:\ell}: (\mathbf{t},\mathbf{y})\not\in\text{int}L_0\}} \prod_{i=1}^{\ell}\varphi'(s_i)\psi^{(\ell)}\left(\sum_{i=1}^{\ell}\varphi(s_i)+\sum_{j=1}^{d-\ell}\varphi(y_j)\right) \ \mathrm{d}\lambda_\ell(\mathbf{s}).
		\end{align*}
		If $\mathbf{s}\in L_0^{1:\ell}$ then $(\mathbf{s},\mathbf{y})\in L_0$ if, and only if, $\sum_{i=1}^{\ell}\varphi(x_i) + \sum_{j=1}^{d-\ell}\varphi(y_j) > \varphi(0)$ and hence $\psi^{(\ell)}\left(\sum_{i=1}^{\ell}\varphi(x_i) + \sum_{j=1}^{d-\ell}\varphi(y_j)\right) = 0$ and whenever $\mathbf{s}\not\in L_0^{1:\ell}$ we have $\psi^{(\ell)}\left(\sum_{i=1}^{\ell}\varphi(x_i) + \sum_{j=1}^{d-\ell}\varphi(y_j)\right) = 0$, too. As direct consequence,
		\begin{align*}
			(\star) &= \int_{(\mathbf{0},\mathbf{x}]} \prod_{i=1}^{\ell}\varphi'(s_i)\psi^{(\ell)}\left(\sum_{i=1}^{\ell}\varphi(s_i)+\sum_{j=1}^{d-\ell}\varphi(y_j)\right) \ \mathrm{d}\lambda_\ell(\mathbf{s}) \\
			&= \int_{(\mathbf{0},\mathbf{x}_{1:\ell-1}]} \prod_{i=1}^{\ell-1}\varphi'(s_i)
			\int_{(0,x_\ell]} \varphi'(s_\ell) 			\psi^{(\ell)}\left(\sum_{i=1}^{\ell}\varphi(s_i)+\sum_{j=1}^{d-\ell}\varphi(y_j)\right) \ \mathrm{d}\lambda(s_\ell)\mathrm{d}\lambda_{\ell-1}(\mathbf{s}_{1:\ell-1})
		\end{align*}
		Applying change of coordinates and considering that $\psi^{(r)}(\infty) := \psi^{(r)}(\lim_{z\to\infty}z) = 0$ for every $r\in\{1,\ldots,d-2\}$ yields
		\begin{align*}
			(\star) &= \int_{(\mathbf{0},\mathbf{x}_{1:\ell-1}]} \prod_{i=1}^{\ell}\varphi'(s_i) \lim_{\delta\to 0}\Biggl( \psi^{(\ell-1)}\left(\sum_{i=1}^{\ell-1}\varphi(s_i)+\varphi(x_\ell) + \sum_{j=1}^{d-\ell}\varphi(y_j)\right) \\ 
			&\qquad - \psi^{(\ell-1)}\left( \sum_{i=1}^{\ell-1}\varphi(s_i)+\varphi(\delta) + \sum_{j=1}^{d-\ell}\varphi(y_j)\right) \Biggr) \ \mathrm{d}\lambda_{\ell-1}(\mathbf{s}_{1:\ell-1}) \\
			&= \int_{(\mathbf{0},\mathbf{x}_{1:\ell-1}]} \prod_{i=1}^{\ell}\varphi'(s_i) \psi^{(\ell-1)}\left(\sum_{i=1}^{\ell-1}\varphi(s_i)+\varphi(x_\ell) + \sum_{j=1}^{d-\ell}\varphi(y_j)\right)\ \mathrm{d}\lambda_{\ell-1}(\mathbf{s}_{1:\ell-1}).
		\end{align*}		
		Iterating this procedure another $\ell-2$ times finally results in
		\begin{align*}
			(\star) &= \int_{(0,x_1]} \varphi'(s_1) \psi'\left(\varphi(s_1) + \sum_{i=2}^{\ell}\varphi(x_i) + \sum_{j=1}^{d-\ell}\varphi(y_j)\right) \\ &=\psi\left(\sum_{i=1}^{\ell}\varphi(x_i) + \sum_{j=1}^{d-\ell}\varphi(y_j)\right) = C_\psi(\mathbf{x},\mathbf{y})
		\end{align*}
		as required.
	\end{proof}

	\section{Convergence in $\mathcal{C}_\text{ar}^d$}
	\label{sec:convergence:D1}
	In this section we extend the results in \cite{wcc} characterizing uniform convergence in the space of two-dimensional Archimedean copulas to the multivariate setting. There already exist several characterizations in terms of the corresponding generators, inverse generators, generating probability measures, marginal copulas and marginal densities (\cite{convMultiArch}, also see \cite{convArch}). In what follows, we contribute the items of convergence w.r.t. the metric $D_1$ (and $D_2,D_\infty$) as well as $1$-weak conditional convergence to the list of characterizing properties. Thereby, given $d$-dimensional copulas $C, C_1, C_2,\ldots$ with corresponding $\ell$-Markov kernels $K_C, K_{C_1}, K_{C_2},\ldots$, where $\ell\in\{1,\ldots,d-1\}$, we say that $(C_n)_{n\in\mathbb{N}}$ converges $\ell$-\emph{weakly conditional} to $C$ if, and only if, there exists a set $\Lambda\in\mathcal{B}(\mathbb{I}^\ell)$ with $\mu_{C^{1:\ell}}(\Lambda) = 1$ such that for every $\mathbf{x}\in\Lambda$ the sequence $(K_{C_n}(\mathbf{x},\cdot))_{n\in\mathbb{N}}$ of probability measures on $\mathcal{B}(\mathbb{I}^{d-\ell})$ converges weakly to the probability measure $K_C(\mathbf{x},\cdot)$ (cf. \cite[Section 4]{qmd}). Considering $\ell>1$, however, this notion may fail to be reasonable in full generality as for arbitrary $A,B\in\mathcal{C}^d$ it is possible that $\mu_{A^{1:\ell}}$ and $\mu_{B^{1:\ell}}$ are singular with respect to each other, and being only defined uniquely $\mu_{A^{1:\ell}}$-almost everywhere, $\mu_{B^{1:\ell}}$-almost everywhere, respectively, comparing $K_A(\mathbf{x},\cdot)$ and $K_B(\mathbf{x},\cdot)$ is nonsensical. 
	As pointed out in \cite{convMultiArch}, however, for special classes of copulas such as the class of copulas with identical marginals or Archimedean copulas this notion does make sense. Since every Archimedean copula $C$ has absolutely continuous marginals and its $\ell$-marginal density $c^{1:\ell}$ fulfills $c^{1:\ell}(\mathbf{x})>0$ for every $\mathbf{x}\in\mathbb{I}^\ell\setminus L_0^{1:\ell}$, the marginal measures $\mu_{C^{1:\ell}_1}$, $\mu_{C^{1:\ell}_2}$ of two Archimedean copulas $C_1,C_2$ can not be singular with respect to each other.

	The following characterization of uniform convergence for multivariate Archimedean copulas is due to \cite{convMultiArch}:
	\begin{theorem}\label{th:multi:arch:paper}
		Suppose that $C,C_1,C_2,\ldots$ are $d$-dimensional Archimedean copulas with generators $\psi, \psi_1, \psi_2,\ldots$, respectively. Then the following assertions are equivalent:
		\begin{enumerate}
			\item $(C_n)_{n\in\mathbb{N}}$ converges uniformly to $C$,
			\item $(C_n^{i:j})_{n\in\mathbb{N}}$ converges uniformly to $C^{i:j}$ for any $1\leq i < j \leq d$,
			\item $(\varphi_n)_{n\in\mathbb{N}}$ converges pointwise to $\varphi$ on $(0,1]$,
			\item $(\psi_n)_{n\in\mathbb{N}}$ converges uniformly to $\psi$ on $[0,\infty)$,
			\item $(\psi_n^{(m)})_{n\in\mathbb{N}}$ converges pointwise to $\psi^{(m)}$ on $(0,\infty)$ for $m\in\{1,\ldots, d-2\}$ and $(D^-\psi_n^{(d-2)})_{n\in\mathbb{N}}$ converges pointwise to $D^-\psi^{(d-2)}$ on \emph{Cont}$(D^-\psi^{(d-2)})$,
			\item $(c_n^{1:m})_{n\in\mathbb{N}}$ converges to $c^{1:m}$ a.e. in $\mathbb{I}^{m}$, $m\in\{2,\ldots, d-1\}$.
		\end{enumerate}
	\end{theorem}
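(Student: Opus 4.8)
The plan is to establish the cycle of equivalences by proving a convenient chain of implications and closing the loop, leaning heavily on the explicit Markov kernel representation of Theorem~\ref{th:interkernel:arch} together with the density formula \eqref{eq:arch:density}. The natural skeleton is
$(4)\Rightarrow(5)\Rightarrow(6)\Rightarrow(1)\Rightarrow(2)$ and then $(2)\Rightarrow(3)\Rightarrow(4)$, with a couple of shortcuts where they are cleaner. First I would show $(4)\Rightarrow(5)$: uniform convergence of the $d$-monotone generators $\psi_n\to\psi$ on $[0,\infty)$, combined with convexity of $(-1)^{d-2}\psi_n^{(d-2)}$, forces convergence of the derivatives. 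This is the classical fact that a pointwise-convergent sequence of convex functions converges, together with convergence of their one-sided derivatives at every point of continuity of the limit's derivative; iterating this $d-2$ times (each $(-1)^m\psi_n^{(m)}$ is convex and monotone, hence its derivative up to the next order converges) yields pointwise convergence of $\psi_n^{(m)}\to\psi^{(m)}$ on $(0,\infty)$ for $m\le d-2$, and pointwise convergence of $D^-\psi_n^{(d-2)}\to D^-\psi^{(d-2)}$ on $\mathrm{Cont}(D^-\psi^{(d-2)})$. The normalization $\psi(1)=\tfrac12$ is what makes all of this work without a diverging subsequence.

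Next, $(5)\Rightarrow(6)$ is a direct consequence of the density formula \eqref{eq:arch:density}: since $\varphi_n\to\varphi$ and $\varphi_n'\to\varphi'$ a.e.\ (the latter follows from pointwise convergence of the strictly monotone $\psi_n$ via the inverse-function relation, again using convexity to upgrade to derivatives a.e.), the product $\prod_{i=1}^m \varphi_n'(x_i)\cdot D^-\psi_n^{(m-1)}\big(\sum \varphi_n(x_i)\big)$ converges to $c^{1:m}(\mathbf{x})$ for a.e.\ $\mathbf{x}\in\mathbb{I}^m$, because the argument $\sum\varphi_n(x_i)$ converges and lands in $\mathrm{Cont}(D^-\psi^{(m-1)})$ for a.e.\ $\mathbf{x}$ (the bad set is a countable union of level sets of a nondegenerate analytic-type constraint, hence $\lambda_m$-null). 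The implication $(6)\Rightarrow(1)$ uses Scheff\'e's lemma on $\mathbb{I}^{d-1}$ (or just on $\mathbb{I}^m$ for the relevant marginal): a.e.\ convergence of densities of probability measures plus the fact that they all integrate to $1$ gives convergence in total variation, hence weak convergence, hence (by compactness of $(\mathcal C^d,d_\infty)$ and pointwise evaluation) uniform convergence of the copulas themselves. The steps $(1)\Rightarrow(2)$ is trivial (marginals are continuous images), and $(2)\Rightarrow(3)\Rightarrow(4)$ is where one reconstructs the generator from a single bivariate margin: pointwise convergence of $C_n^{ij}(t,t)=\psi_n(2\varphi_n(t))$ on the diagonal, together with the normalization $\psi_n(1)=\tfrac12$, pins down $\varphi_n(t)\to\varphi(t)$ for each $t\in(0,1]$ (invert the strictly decreasing diagonal section), and then $\psi_n\to\psi$ pointwise on $[0,\infty)$ follows; uniformity of $\psi_n\to\psi$ is then upgraded by Dini-type reasoning since all $\psi_n$ and $\psi$ are monotone with common endpoint values $\psi_n(0)=1$, $\psi_n(\infty)=0$.

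I expect the main obstacle to be the careful handling of the derivative-convergence step $(4)\Leftrightarrow(5)$ at the top order $m=d-2$, where one only controls the left derivative $D^-\psi^{(d-2)}$ and only on its continuity set. The subtlety is that $\mathrm{Cont}(D^-\psi^{(d-2)})$ depends on the limit $\psi$, not on the $\psi_n$, so one must argue that the (countably many) jump points of the limit are genuinely negligible for the a.e.-statements downstream, and that convexity transfers convergence of a sequence of convex functions to convergence of their left derivatives at continuity points of the limiting left derivative — a standard but slightly delicate real-analysis fact that I would either cite from \cite{convMultiArch} or prove by a short monotonicity-plus-sandwiching argument. A secondary technical point is ensuring that the exceptional $\lambda_m$-null sets arising in $(5)\Rightarrow(6)$ and $(6)\Rightarrow(1)$ can be chosen uniformly in $n$ (they can: each is determined by $\varphi$ and the limit generator alone after passing to the limit), so that Scheff\'e's lemma applies on a single full-measure set. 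Everything else is bookkeeping with the explicit formulas already in hand, and since several of these implications are essentially recorded in \cite{convMultiArch}, I would reference that work wherever the argument is identical and only spell out the pieces needed to connect to the Markov-kernel viewpoint.
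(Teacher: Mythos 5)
The paper does not actually prove this statement: it is presented as Theorem~\ref{th:multi:arch:paper} with the remark that the characterization ``is due to \cite{convMultiArch}'', and no proof is given in the text. So there is no in-paper argument to compare against; what follows is an assessment of your sketch on its own terms, keeping in mind that the reference you cite is the one the paper itself points to.

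Your chain $(4)\Rightarrow(5)\Rightarrow(6)$ is fine: the convexity-plus-pointwise-convergence fact, iterated through the monotone/convex functions $(-1)^m\psi^{(m)}$ for $m\le d-2$, and the density formula \eqref{eq:arch:density} (applied to the $m$-marginal with $m\le d-1$, where $\psi^{(m)}$ is continuous except at the top order) do what you say. The gap is in $(6)\Rightarrow(1)$. Scheff\'e applied to $c_n^{1:m}\to c^{1:m}$ a.e.\ gives total-variation (hence uniform) convergence of the $m$-dimensional marginal copulas $C_n^{1:m}\to C^{1:m}$ for $m\le d-1$, but this is not yet uniform convergence of the full $d$-dimensional $C_n$ to $C$ — that implication fails for general copulas and needs the Archimedean structure. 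Concretely, one must route through a single bivariate marginal: $C_n^{1:2}\to C^{1:2}$ uniformly, then invoke the $d=2$ characterization (i.e.\ \cite{wcc}, together with the normalization $\psi_n(1)=\tfrac12$) to get $\varphi_n\to\varphi$ pointwise on $(0,1]$, then upgrade to uniform convergence of $\psi_n\to\psi$ on $[0,\infty)$ (monotonicity with common endpoints), and finally pass to $C_n(\mathbf{x})=\psi_n(\sum_i\varphi_n(x_i))\to\psi(\sum_i\varphi(x_i))=C(\mathbf{x})$, using compactness of $(\mathcal{C}^d,d_\infty)$ to turn pointwise into uniform convergence. In other words, the correct chain is $(6)\Rightarrow(2)\Rightarrow(3)\Rightarrow(4)\Rightarrow(1)\Rightarrow(2)$ rather than a direct $(6)\Rightarrow(1)$; the ingredients you use to prove $(2)\Rightarrow(3)\Rightarrow(4)$ later in the sketch are exactly the ones that have to be inserted here, so the fix is structural rather than substantive. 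One further point worth spelling out: in $(2)\Rightarrow(3)$, pointwise convergence of the diagonal sections $\psi_n(2\varphi_n(t))$ plus normalization does pin down $\varphi_n(t)\to\varphi(t)$, but the argument is a compactness-and-uniqueness one (any subsequential limit of $\varphi_n(t)$ produces the same diagonal and the same normalization, hence must equal $\varphi(t)$), and it requires the non-strict case to be handled separately; simply ``inverting the diagonal section'' is not enough on its own.

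Aside from that, the plan is sound, and the places where you defer to \cite{convMultiArch} and \cite{wcc} are precisely the places where this paper itself defers to the same references.
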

	
	As direct consequence of Theorem \ref{th:interkernel:arch} (a version of) the 1-Markov kernel of an Archimedean copula $C_\psi$ is given by
	\begin{align*}
		K_\psi(x_1, \bigtimes_{i=2}^d[0,x_i]) = \begin{cases}
			1 & \text{ if } x_1\in\{0,1\} \\
			0 & \text{ if } x_1\in(0,1), \mathbf{x}\in\text{int}L_0 \\
			\frac{\psi'\left( \sum_{i=1}^d \varphi(x_i) \right)}{\psi'\left( \varphi(x_1) \right)} & \text{ if } x_1\in(0,1), \mathbf{x}\not\in\text{int}L_0
		\end{cases}.
	\end{align*}
	
	\noindent The following lemma contributes a characterization of pointwise/uniform convergence of Archimedean copulas in terms of $1$-Markov kernels (i.e., $(d-1)$-dimensional conditional distributions):
	
	\begin{Lemma}\label{lem:1wcc:arch}
		Suppose that $C_\psi,C_1,C_2,\ldots$ are $d$-dimensional Archimedean copulas with generators $\psi, \psi_1, \psi_2,\ldots$ and 1-Markov kernels $K_\psi, K_1, K_2,\ldots$, respectively. If $(C_n)_{n\in\mathbb{N}}$ converges uniformly to $C$ then for every $x_1\in(0,1)$
		\begin{align*}
			\lim_{n\to\infty} K_n(x_1, \bigtimes_{i=2}^d[0,x_i]) = K_\psi(x_1, \bigtimes_{i=2}^d[0,x_i])
		\end{align*}
		holds for every $(x_2,\ldots,x_d)\in \mathbb{I}^{d-1}$.
	\end{Lemma}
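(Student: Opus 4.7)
The strategy is to substitute the explicit kernel formula from Theorem \ref{th:interkernel:arch} and pass to the limit using Theorem \ref{th:multi:arch:paper}. The latter supplies the three convergence ingredients needed: $\varphi_n\to\varphi$ pointwise on $(0,1]$, $\psi_n\to\psi$ uniformly on $[0,\infty)$, and $\psi_n^{(1)}\to\psi^{(1)}$ pointwise on $(0,\infty)$ (replaced by $D^-\psi$ and convergence at its continuity points when $d=2$). A useful preparatory observation is that $\psi^{(1)}(\varphi(x_1))<0$ for every $x_1\in(0,1)$: since $\psi^{(1)}$ is non-decreasing and non-positive, a zero at some $z\in(0,\varphi(0))$ would force $\psi^{(1)}\equiv 0$ on $[z,\varphi(0)]$, contradicting strict monotonicity of $\psi$ there. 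This guarantees that the denominator in the third branch of \eqref{eq:interkernel:arch} is well-behaved.

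Fix $x_1\in(0,1)$ and $\mathbf{y}=(x_2,\ldots,x_d)\in\mathbb{I}^{d-1}$, write $s:=\sum_{i=1}^d\varphi(x_i)$ and $s_n:=\sum_{i=1}^d\varphi_n(x_i)$, and split into cases. In the principal case $(x_1,\mathbf{y})\notin L_0$ (equivalently $s<\varphi(0)$) all coordinates are strictly positive, so $s_n\to s$ by Theorem \ref{th:multi:arch:paper}. Combining uniform convergence of $\psi_n$ with positivity of $\psi$ on $[0,\varphi(0))$ shows $s_n<\varphi_n(0)$ eventually, so both $K_n$ and $K_\psi$ are evaluated via the third branch of \eqref{eq:interkernel:arch}. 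Since the derivatives $\psi_n^{(1)}$ are monotone and converge pointwise to a limit that is continuous on $(0,\varphi(0))$, the convergence is in fact locally uniform (a P\'olya-type argument), so evaluation at the moving points $s_n$ and $\varphi_n(x_1)$ is legitimate and the ratio in \eqref{eq:interkernel:arch} passes to the limit. In the case $(x_1,\mathbf{y})\in\mathrm{int}\,L_0$ the target kernel equals $0$: for those $n$ with $s_n>\varphi_n(0)$, branch~2 yields $K_n=0$ directly, while for the remaining $n$ branch~3 applies and one uses that $\psi^{(1)}$ vanishes beyond $\varphi(0)$ (forcing the numerator to $0$) while the denominator stays bounded away from $0$. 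The coordinate cases $x_i=0$ with $i\geq 2$ are mild variants: $\varphi(0)=\infty$ reduces to the interior case, whereas $\varphi(0)<\infty$ falls under the same analysis.

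The main obstacle is the borderline situation $s=\varphi(0)$, which is only relevant when $C_\psi$ is non-strict: here $s_n$ may approach $\varphi(0)$ from either side, the two branches of \eqref{eq:interkernel:arch} alternate along $n$, and $K_\psi(x_1,[\mathbf{0},\mathbf{y}])$ itself need not vanish. This is resolved by combining left-continuity of $D^-\psi^{(d-2)}$ (which the kernel formula is precisely designed around) with the convergence $\varphi_n(0)\to\varphi(0)$, itself a direct consequence of uniform convergence of $\psi_n$ together with strict monotonicity of $\psi$ on $[0,\varphi(0)]$.
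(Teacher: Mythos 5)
Your proposal follows the same route as the paper's proof: substitute the explicit kernel formula \eqref{eq:interkernel:arch}, distinguish by the position of $(x_1,\mathbf{y})$ relative to $L_0$, and pass to the limit using the generator and derivative convergences supplied by Theorem~\ref{th:multi:arch:paper} combined with a P\'olya-type monotone argument (which the paper packages as the cited Lemmas~4.3 and~4.4 of \cite{convMultiArch}). Your explicit observation that $\psi^{(1)}(\varphi(x_1))<0$ and your separate treatment of coordinates $x_j=0$ are useful clarifications the paper leaves implicit.

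One slip worth flagging: in your borderline discussion you invoke left-continuity of $D^{-}\psi^{(d-2)}$, describing it as ``what the kernel formula is precisely designed around.'' That left-hand derivative is the object appearing in the \emph{$(d-1)$}-Markov kernel; the \emph{$1$}-Markov kernel of a $d$-dimensional Archimedean copula with $d\geq 3$ uses $\psi^{(1)}$, which is genuinely continuous on $(0,\infty)$ and vanishes at $\varphi(0)$, so $s=\varphi(0)$ is not a separate case there and the argument reduces to ordinary continuity. Only for $d=2$ do the two objects coincide ($D^{-}\psi^{(0)}=D^{-}\psi$), and only there can $K_\psi$ carry an atom at the boundary. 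That said, the paper's own second case parenthetically asserts $z<\varphi(0)$ where strictly one should allow $z\leq\varphi(0)$, so both arguments are a touch casual about the same point; your approach is otherwise the same as the paper's.
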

	\begin{proof}
		Suppose $C$ is non-strict and fix $x_1\in(0,1)$. First, consider the $x_1$-section of $L_0$ and let $(x_2,\ldots,x_d)\in(0,1)^{d-1}\cap (\text{int}L_0$)$_{x_1}$, i.e., $\sum_{i=1}^d \varphi(x_i) > \varphi(0)$. Defining $z_n := \sum_{i=1}^d \varphi_n(x_i)$, $z := \sum_{i=1}^d\varphi(x_i)$ and applying Theorem \ref{th:multi:arch:paper}, the sequence $\left(z_n\right)_{n\in\mathbb{N}}$ converges to $z > \varphi(0)$ and according to \cite[Lemma 4.3, Lemma 4.4]{convMultiArch} we have $\psi'_n\left(z_n\right) \to \psi'\left(z\right) = 0$ for $n\to\infty$. Additionally, $\psi'_n(\varphi_n(x_1))\to\psi'(\varphi(x_1))$ as $n\to\infty$ and taking together we get $K_n(x_1, \bigtimes_{i=2}^d[0,x_i]) \to K_\psi(x_1, \bigtimes_{i=2}^d[0,x_i])$ for $n\to\infty$. Second, consider $(x_2,x_3,\ldots,x_d)\in(0,1)^{d-1}\setminus(\text{int}L_0$)$_{x_1}$. Again using $z_n$, $z$ defined as above  (in this case we have $z < \varphi(0)$) and \cite[Lemma 4.3]{convMultiArch} we directly obtain $\psi_n'(z_n)\to\psi'(z)$ as well as $\psi'_n(\varphi_n(x_1))\to\psi'(\varphi(x_1))$ as $n\to\infty$ from which convergence of the $1$-Markov kernels follows. For strict limit copula $C$ we have int$L_0=\varnothing$ and the assertion results in the same way as for the second case above.
	\end{proof}
	
	\noindent Furthermore, according to \cite[Lemma 3, Lemma 5]{p15} $1$-weak conditional convergence implies convergence w.r.t. $D_1$, $D_2$ and w.r.t. $D_\infty$ which in turn all imply uniform convergence. Incorporating these facts directly yields the following extension of Theorem \ref{th:multi:arch:paper}:

	\begin{theorem}\label{th:main:res:arch}
		Suppose that $C,C_1,C_2,\ldots$ are $d$-dimensional Archimedean copulas. Then any of the statements in Theorem \ref{th:multi:arch:paper} is equivalent to any of the following:
		\begin{enumerate}
			\item $(C_n)_{n\in\mathbb{N}}$ converges to $C$ w.r.t. $D_i$, $i = 1,2$,
			\item $(C_n)_{n\in\mathbb{N}}$ converges to $C$ w.r.t. $D_\infty$,
			\item $(C_n)_{n\in\mathbb{N}}$ converges $1$-weakly conditional to $C$.
		\end{enumerate}
	\end{theorem}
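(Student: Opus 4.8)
Theorem~\ref{th:multi:arch:paper} already establishes that all of its items are mutually equivalent, and one of them, item~(1), is uniform ($d_\infty$-)convergence. Hence it suffices to weave the three new statements into a single cycle of implications anchored at uniform convergence: the plan is to prove that uniform convergence implies $1$-weak conditional convergence, that $1$-weak conditional convergence implies convergence with respect to each of $D_1$, $D_2$ and $D_\infty$, and that convergence with respect to any of $D_1$, $D_2$, $D_\infty$ implies uniform convergence. Chaining these implications then identifies items~(1)--(3) of the present statement with uniform convergence and, via Theorem~\ref{th:multi:arch:paper}, with all the remaining characterizations.

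\textbf{The main implication.} For ``uniform $\Rightarrow$ $1$-weak conditional'' I would argue as follows. Since the first univariate margin of any copula is uniform, $\mu_{C^{1:1}} = \lambda_1$, so $\Lambda := (0,1)$ has full $\mu_{C^{1:1}}$-measure. Fixing $x_1 \in \Lambda$ and using throughout the versions of the $1$-Markov kernels supplied by Theorem~\ref{th:interkernel:arch}, Lemma~\ref{lem:1wcc:arch} gives $K_n(x_1,[\mathbf{0},\mathbf{y}]) \to K_\psi(x_1,[\mathbf{0},\mathbf{y}])$ for \emph{every} $\mathbf{y} \in \mathbb{I}^{d-1}$. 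Since $\mathbf{y} \mapsto K_\psi(x_1,[\mathbf{0},\mathbf{y}])$ is the distribution function of the probability measure $K_\psi(x_1,\cdot)$ on $\mathcal{B}(\mathbb{I}^{d-1})$, pointwise convergence of the distribution functions at every point---in particular at every continuity point of the limit---yields weak convergence of $K_n(x_1,\cdot)$ to $K_\psi(x_1,\cdot)$. As $x_1 \in \Lambda$ was arbitrary, this is exactly $1$-weak conditional convergence of $(C_n)_{n\in\mathbb{N}}$ to $C$.

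\textbf{The easy implications.} The implication ``$1$-weak conditional $\Rightarrow$ convergence w.r.t.\ $D_1$, $D_2$, $D_\infty$'' is the content of \cite[Lemma~3, Lemma~5]{p15} specialized to $d$-dimensional copulas, while ``convergence w.r.t.\ $D_1$, $D_2$ or $D_\infty$ $\Rightarrow$ uniform convergence'' is recalled in Section~\ref{sec:2:preliminaries}. Together with the main implication this closes the cycle and, by Theorem~\ref{th:multi:arch:paper}, completes the proof.

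\textbf{Expected obstacle.} There is essentially no obstacle here: the analytic substance lives entirely in Lemma~\ref{lem:1wcc:arch}. The only step deserving a moment's attention is the transition from pointwise convergence of the conditional distribution functions to weak convergence of the conditional distributions, which is unproblematic precisely because Lemma~\ref{lem:1wcc:arch} provides convergence at \emph{all} points of $\mathbb{I}^{d-1}$, not merely at the continuity points of the limit. A minor point of bookkeeping is to make sure $D_2$ is covered; it enters the cycle through the same citation to \cite{p15} and exits through the $d_\infty$-domination recorded in Section~\ref{sec:2:preliminaries}.
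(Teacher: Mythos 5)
Your proposal matches the paper's proof almost verbatim: the paper also closes the cycle via uniform convergence $\Rightarrow$ $1$-weak conditional convergence (Lemma~\ref{lem:1wcc:arch}) $\Rightarrow$ convergence w.r.t.\ $D_1, D_2, D_\infty$ (\cite[Lemma~3, Lemma~5]{p15}) $\Rightarrow$ uniform convergence, with Theorem~\ref{th:multi:arch:paper} supplying the rest. Your additional remark that pointwise convergence of the conditional distribution functions at every point of $\mathbb{I}^{d-1}$ yields weak convergence of the kernels is a helpful but implicit step the paper leaves to the reader.
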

	
	Having all Markov kernels of Archimedean copulas at hand, the next corollary says that within the class of $d$-dimensional Archimedean copulas uniform convergence (and thus any of the equivalent conditions) implies $\ell$-weak conditional convergence for every $\ell\in\{1,\ldots,d-1\}$, constituting an additional way of extending \cite[Theorem 4.1]{convMultiArch}.
	
	\begin{Cor}\label{cor:implication:interkernel:conv}
		Suppose that $C,C_1,C_2,\ldots$ are $d$-dimensional Archimedean copulas with generators $\psi, \psi_1, \psi_2,\ldots$, respectively. If $(C_n)_{n\in\mathbb{N}}$ converges uniformly to $C$ then for any $\ell\in\{1,\ldots,d-1\}$ there exists a set $\Lambda\in\mathcal{B}(\mathbb{I}^\ell)$ with $\mu_{C^{1:\ell}}(\Lambda)=1$ such that for all $\mathbf{x}\in\Lambda$ we have
		\begin{align*}
			\lim_{n\to\infty}K_n(\mathbf{x}, [\mathbf{0},\mathbf{y}]) = K_\psi(\mathbf{x},[\mathbf{0},\mathbf{y}])
		\end{align*}
		for every $\mathbf{y}\in U^\mathbf{x}$, where $U^\mathbf{x}$ is dense in $\mathbb{I}^{d-\ell}$.
	\end{Cor}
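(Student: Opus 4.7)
The plan is to combine the explicit Markov-kernel formula \eqref{eq:interkernel:arch} with the generator-level convergence delivered by Theorem \ref{th:multi:arch:paper}, so that Corollary \ref{cor:implication:interkernel:conv} becomes the natural analogue of Lemma \ref{lem:1wcc:arch} for arbitrary $\ell\in\{1,\ldots,d-1\}$. Write $\Psi^{(\ell)}$ for $\psi^{(\ell)}$ if $\ell\leq d-2$ and for $D^-\psi^{(d-2)}$ if $\ell=d-1$; in either case the set of discontinuities of $\Psi^{(\ell)}$ in $(0,\infty)$ is at most countable. As full-measure set I would take
\begin{equation*}
\Lambda:=\Big\{\mathbf{x}\in(0,1)^\ell\setminus L_0^{1:\ell}:\ \textstyle\sum_{i=1}^\ell\varphi(x_i)\in\mathrm{Cont}(\Psi^{(\ell)})\Big\}.
\end{equation*}
Its complement in $\mathbb{I}^\ell$ is contained in $L_0^{1:\ell}\cup\partial\mathbb{I}^\ell\cup S$, where $S$ is the pre-image under the continuous, coordinatewise-strictly-monotone map $\mathbf{x}\mapsto\sum_{i=1}^\ell\varphi(x_i)$ of a countable set. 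Each of the three pieces has $\mu_{C^{1:\ell}}$-measure zero: $L_0^{1:\ell}$ by definition, $\partial\mathbb{I}^\ell$ and $S$ because the $\ell$-marginal of an Archimedean copula is absolutely continuous and each level set of the strictly monotone sum map is Lebesgue-null in $\mathbb{I}^\ell$.

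For each $\mathbf{x}\in\Lambda$ I would then set
\begin{equation*}
U^\mathbf{x}:=\Big\{\mathbf{y}\in(0,1)^{d-\ell}:\ \textstyle\sum_{i=1}^\ell\varphi(x_i)+\sum_{j=1}^{d-\ell}\varphi(y_j)\in\mathrm{Cont}(\Psi^{(\ell)})\Big\},
\end{equation*}
whose complement in $\mathbb{I}^{d-\ell}$ is a countable union of closed level sets of a coordinatewise-strictly-monotone continuous map together with the boundary of the cube, hence has empty interior; thus $U^\mathbf{x}$ is dense in $\mathbb{I}^{d-\ell}$. The pointwise convergence at $\mathbf{y}\in U^\mathbf{x}$ is then verified branch by branch using \eqref{eq:interkernel:arch}. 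If $(\mathbf{x},\mathbf{y})\in\mathrm{int}L_0$, then $K_\psi(\mathbf{x},[\mathbf{0},\mathbf{y}])=0$, and Theorem \ref{th:multi:arch:paper}(3) combined with \cite[Lemmas 4.3--4.4]{convMultiArch} yields $K_n(\mathbf{x},[\mathbf{0},\mathbf{y}])\to 0$ exactly as in the proof of Lemma \ref{lem:1wcc:arch}. Otherwise both arguments appearing in \eqref{eq:interkernel:arch} lie in $\mathrm{Cont}(\Psi^{(\ell)})\cap(0,\varphi(0))$, and the monotonicity of $(-1)^\ell\Psi^{(\ell)}$ together with Theorem \ref{th:multi:arch:paper}(5) upgrades pointwise convergence at continuity points to $\Psi_n^{(\ell)}(z_n)\to\Psi^{(\ell)}(z)$ whenever $z_n\to z$ (standard sandwich using $z_n\in[z-\varepsilon,z+\varepsilon]$ and letting $n\to\infty$ before $\varepsilon\to 0$). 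Dividing the two (nonzero) limits closes this case.

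The main obstacle is the possible mismatch in strictness between $C$ and the $C_n$: if $C$ is strict but some $C_n$ are not, or vice versa, then terms $\psi_n^{(\ell)}(z_n)$ with $z_n\to z>\varphi(0)$ are not forced to be eventually zero even though the target value is $0$. This is precisely the situation addressed by \cite[Lemmas 4.3--4.4]{convMultiArch}, already exploited in Lemma \ref{lem:1wcc:arch}, so the step from $\ell=1$ to general $\ell$ is essentially book-keeping: identifying the correct $\mu_{C^{1:\ell}}$-null exceptional set inside $\Lambda$ and the nowhere-dense set to excise from $U^\mathbf{x}$ to accommodate the at most countable discontinuities of $\Psi^{(\ell)}$ in the $\ell=d-1$ case.
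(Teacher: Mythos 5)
Your proof is correct and follows essentially the same approach as the paper: both rely on the explicit kernel formula of Theorem~\ref{th:interkernel:arch}, the generator-level convergence of Theorem~\ref{th:multi:arch:paper}, and the sandwich arguments in \cite[Lemmas~4.3--4.4]{convMultiArch}. The only difference is organizational --- the paper delegates $\ell=1$ and $\ell=d-1$ to Lemma~\ref{lem:1wcc:arch} and to \cite{convMultiArch}, and for $\ell\in\{2,\ldots,d-2\}$ simply uses continuity of $\psi^{(\ell)}$ with $\Lambda=\mathbb{I}^\ell\setminus L_0^{1:\ell}$, whereas you give a unified argument across all $\ell$ and spell out the dense set $U^\mathbf{x}$ that the paper leaves implicit.
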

	\begin{proof}
		The case $\ell = d-1$ can be found in \cite{convMultiArch} while Lemma \ref{lem:1wcc:arch} deals with the case $\ell = 1$. Suppose $\ell\in\{2,\ldots,d-2\}$ then considering that $\psi^{(\ell)}$ is continuous on $(0,\infty)$ and setting $\Lambda := \mathbb{I}^\ell\setminus$ $L_0^{1:\ell}$ yields $\mu_{C^{1:\ell}}(\Lambda) = 1$. Now fix $\mathbf{x}\in\Lambda$. Proceeding as in Lemma \ref{lem:1wcc:arch} and distinguishing the cases $\mathbf{y}\in (\text{int}L_0)_\mathbf{x}$ and $\mathbf{y}\in \mathbb{I}^{d-\ell}\setminus(\text{int}L_0)_\mathbf{x}$ the result follows.
	\end{proof}

	\section{Singularity in $\mathcal{C}_\text{ar}^d$}
	\label{sec:singularity}
	Turning to singularity aspects, the main goal of this short section is to show that a $d$-dimensional copula $C$ with absolutely continuous $\ell$-marginal is singular if, and only if, its $\ell$-Markov kernel is singular w.r.t. $\lambda_{d-\ell}$. As direct consequence we obtain the fact that an Archimedean copula is singular if, and only if, its $\ell$-Markov kernel is singular w.r.t. $\lambda_{d-\ell}$ for \emph{every} $\ell\in\{1,\ldots,d-1\}$. \\
	According to \cite{Lange} the $\ell$-Markov kernel $K_C(\mathbf{x},\cdot)$ of a $d$-dimensional copula $C$ can be decomposed into the sum of two substochastic kernels $K_C^\text{abs}(\mathbf{x},\cdot)$ and $K_C^\text{sing}(\mathbf{x},\cdot)$, i.e. for every $\mathbf{x}\in\mathbb{I}^\ell$ and $E\in\mathcal{B}(\mathbb{I}^{d-\ell})$ it holds that
	\begin{align*}
		K_C(\mathbf{x}, E) = K_C^\text{abs}(\mathbf{x}, E) + K_C^\text{sing}(\mathbf{x}, E),
	\end{align*}
	where the measure $K_C^\text{abs}(\mathbf{x}, \cdot)$ is absolutely continuous w.r.t. $\lambda_{d-\ell}$ and the measure $K_C^\text{sing}(\mathbf{x},\cdot)$ is singular w.r.t. $\lambda_{d-\ell}$. 
	
	The next theorem is this section's main result:
	
	\begin{theorem}\label{th:singular:kernels}
		Suppose that $C$ is a $d$-dimensional copula with absolutely continuous $\ell$-marginal for some $\ell\in\{2,\ldots,d-1\}$. Then $C$ is singular (w.r.t. to $\lambda_{d}$) if, and only if, there exists a set $\Lambda\in\mathcal{B}(\mathbb{I}^\ell)$ with $\mu_{C^{1:\ell}}(\Lambda) = 1$ such that for all $\mathbf{x}\in\Lambda$ we have $K_C(\mathbf{x}, \cdot)$ is singular w.r.t. $\lambda_{d-\ell}$.
	\end{theorem}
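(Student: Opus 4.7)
The plan is to pass from the kernel-level decomposition $K_C(\mathbf{x},\cdot) = K_C^{\text{abs}}(\mathbf{x},\cdot) + K_C^{\text{sing}}(\mathbf{x},\cdot)$ to a Lebesgue decomposition of $\mu_C$ with respect to $\lambda_d$ via disintegration, and then to read off the equivalence. Concretely, I would define two finite measures on $\mathcal{B}(\mathbb{I}^d)$ by
\begin{align*}
\nu^{\text{abs}}(G) &:= \int_{\mathbb{I}^\ell} K_C^{\text{abs}}(\mathbf{x}, G_\mathbf{x})\, \mathrm{d}\mu_{C^{1:\ell}}(\mathbf{x}), \\
\nu^{\text{sing}}(G) &:= \int_{\mathbb{I}^\ell} K_C^{\text{sing}}(\mathbf{x}, G_\mathbf{x})\, \mathrm{d}\mu_{C^{1:\ell}}(\mathbf{x}).
\end{align*}
By the disintegration theorem $\mu_C = \nu^{\text{abs}} + \nu^{\text{sing}}$, so the whole task reduces to verifying (a) $\nu^{\text{abs}} \ll \lambda_d$, (b) $\nu^{\text{sing}} \perp \lambda_d$, and then invoking uniqueness of the Lebesgue decomposition.

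For (a) I would exploit absolute continuity of $C^{1:\ell}$ to write $\mathrm{d}\mu_{C^{1:\ell}} = c^{1:\ell}\, \mathrm{d}\lambda_\ell$ and, for each $\mathbf{x}$, take the Radon-Nikodym density $k_\mathbf{x}(\cdot)$ of $K_C^{\text{abs}}(\mathbf{x},\cdot)$ with respect to $\lambda_{d-\ell}$, chosen jointly measurable in $(\mathbf{x},\mathbf{y})$ (which is part of Lange's kernel-decomposition statement). Fubini then produces $(\mathbf{x},\mathbf{y}) \mapsto k_\mathbf{x}(\mathbf{y})\, c^{1:\ell}(\mathbf{x})$ as a $\lambda_d$-density of $\nu^{\text{abs}}$. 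Step (b) is the main obstacle and requires a measurable selection of singular supports: I would again appeal to Lange's result, which yields, for every $\mathbf{x}$, a Borel set $N_\mathbf{x} \in \mathcal{B}(\mathbb{I}^{d-\ell})$ with $\lambda_{d-\ell}(N_\mathbf{x}) = 0$ carrying all the mass of $K_C^{\text{sing}}(\mathbf{x},\cdot)$, and chosen so that $N := \{(\mathbf{x},\mathbf{y}) \in \mathbb{I}^d : \mathbf{y} \in N_\mathbf{x}\}$ is Borel in $\mathbb{I}^d$. Fubini then gives $\lambda_d(N) = \int_{\mathbb{I}^\ell} \lambda_{d-\ell}(N_\mathbf{x})\, \mathrm{d}\lambda_\ell(\mathbf{x}) = 0$, while
\begin{align*}
\nu^{\text{sing}}(\mathbb{I}^d \setminus N) = \int_{\mathbb{I}^\ell} K_C^{\text{sing}}\bigl(\mathbf{x}, \mathbb{I}^{d-\ell} \setminus N_\mathbf{x}\bigr)\, \mathrm{d}\mu_{C^{1:\ell}}(\mathbf{x}) = 0,
\end{align*}
so $\nu^{\text{sing}}$ is concentrated on the $\lambda_d$-null set $N$.

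With (a) and (b) in place, uniqueness of the Lebesgue decomposition identifies $\nu^{\text{abs}}$ and $\nu^{\text{sing}}$ with the absolutely continuous and singular parts of $\mu_C$ relative to $\lambda_d$. Consequently, $C$ is singular with respect to $\lambda_d$ if and only if $\nu^{\text{abs}} = 0$, which by definition of $\nu^{\text{abs}}$ and non-negativity of the integrand is equivalent to $K_C^{\text{abs}}(\mathbf{x}, \mathbb{I}^{d-\ell}) = 0$ for $\mu_{C^{1:\ell}}$-almost every $\mathbf{x}$. Taking $\Lambda$ to be the (Borel) set of such $\mathbf{x}$ yields the required full-measure set on which $K_C(\mathbf{x},\cdot) = K_C^{\text{sing}}(\mathbf{x},\cdot)$ is singular with respect to $\lambda_{d-\ell}$, and the converse implication follows from the same identification by running the argument in reverse.
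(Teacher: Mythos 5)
Your proposal is correct and takes a genuinely different route from the paper's. The paper proves the two implications separately: for \textquotedblleft$C$ singular $\Rightarrow$ kernels singular\textquotedblright\ it disintegrates a single global $\lambda_d$-null set $E$ carrying $\mu_C$ and intersects the two resulting full-measure sets (using $\mu_{C^{1:\ell}}\ll\lambda_\ell$ to reconcile them), while for the converse it invokes Lemma~\ref{lem:abs:cts:kernel}, which identifies the explicit density $c^{1:d}/c^{1:\ell}$ of $K_C^{\text{abs}}(\mathbf{x},\cdot)$ and then integrates to show $\mu_C^{\text{abs}}(\mathbb{I}^d)=0$. You instead show directly that the two measures $\nu^{\text{abs}}$ and $\nu^{\text{sing}}$, obtained by integrating the substochastic kernels against $\mu_{C^{1:\ell}}$, \emph{are} the Lebesgue decomposition of $\mu_C$ with respect to $\lambda_d$; both implications then drop out simultaneously from $\mu_C^{\text{abs}}=\nu^{\text{abs}}$ and nonnegativity of the integrand. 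This is more symmetric and avoids the heavy machinery of Lemma~\ref{lem:abs:cts:kernel}.

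Two remarks on your steps. First, step (a) can be done without any joint measurability of the density $k_\mathbf{x}$: if $\lambda_d(G)=0$, then $\lambda_{d-\ell}(G_\mathbf{x})=0$ for $\lambda_\ell$-a.e.\ $\mathbf{x}$ by Fubini, hence for $\mu_{C^{1:\ell}}$-a.e.\ $\mathbf{x}$ by the assumed absolute continuity of $C^{1:\ell}$, and then $K_C^{\text{abs}}(\mathbf{x},G_\mathbf{x})=0$ for those $\mathbf{x}$, giving $\nu^{\text{abs}}(G)=0$. Second, step (b) is where the substance lies, and your appeal to a jointly Borel family $N_\mathbf{x}$ of null sets \textquotedblleft as part of Lange's result\textquotedblright\ is imprecise: Lange's decomposition supplies the substochastic kernels $K_C^{\text{abs}}$ and $K_C^{\text{sing}}$, not a measurable selection of supports for the singular parts. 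That measurable family does exist (e.g.\ via a Lebesgue differentiation argument producing a jointly measurable $(\mathbf{x},\mathbf{y})\mapsto\liminf_{r\downarrow 0} K_C^{\text{sing}}(\mathbf{x},B_r(\mathbf{y}))/\lambda_{d-\ell}(B_r(\mathbf{y}))$ and taking $N$ to be the set where it is infinite), or one can argue instead that any density of an absolutely continuous part of $\nu^{\text{sing}}$ would, by uniqueness of disintegration, force $K_C^{\text{sing}}(\mathbf{x},\cdot)$ to have a nonzero absolutely continuous component for a set of $\mathbf{x}$ of positive measure, contradicting the pointwise singularity. Either fix closes the gap; as written the step is a claim rather than a proof, but the mathematical content is sound and the overall strategy is a valid and arguably tidier alternative to the one in the paper.
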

	\begin{proof}
		Assuming singularity of $C$ there exists a set $E\in\mathcal{B}(\mathbb{I}^d)$ such that $\mu_C$ concentrates its mass on $E$ while $E$ is a $\lambda_d$-null set. 
		Using disintegration twice yields, on the one hand, the existence of a set $\Upsilon_1$ with $\mu_{C^{1:\ell}}(\Upsilon_1) = 1$ such that for all $\mathbf{x}\in\Upsilon_1$ we have $K_C(\mathbf{x},E_\mathbf{x}) = 1$ and, on the other hand, a set $\Upsilon_2$ with $\lambda_\ell(\Upsilon_2) = 1$ such that for all $\mathbf{x}\in\Upsilon_2$ we have $\lambda_{d-\ell}(E_\mathbf{x}) = 0.$
		
		Further, $
			\mu_{C^{1:\ell}}(\Upsilon_2) = \int_{\Upsilon_2} c^{1:\ell} \ \mathrm{d}\lambda_\ell = \int_{\mathbb{I}^\ell} c^{1:\ell} \ \mathrm{d}\lambda_\ell = 1.
		$
		Considering that $\mu_{C^{1:\ell}}(\Upsilon_1\cap \Upsilon_2) = 1$ the desired property follows.
		
		Assuming the converse, for every $\mathbf{x}\in\Lambda$ we have $K_C(\mathbf{x},\cdot)\perp\lambda_{d-\ell}$ and hence also $K_C^{\text{abs}}(\mathbf{x},\mathbb{I}^{d-\ell}) = 0$. We show that the absolutely continuous part $\mu_C^\text{abs}$ of the Lebesgue decomposition of $\mu_C$ vanishes. Again denoting by $c^{1:d}$ the Radon-Nikodym derivative of $\mu_C$ w.r.t. $\lambda_d$, Lemma \ref{lem:abs:cts:kernel} in \ref{sec:aux:copulas} directly yields
		\begin{align*}
			0 &= \int_\Lambda K_C^\text{abs}(\mathbf{x},\mathbb{I}^{d-\ell}) \ \mathrm{d}\mu_{C^{1:\ell}}(\mathbf{x}) = \int_\Lambda \int_{\mathbb{I}^{d-\ell}} \frac{c^{1:d}(\mathbf{x},\mathbf{y})}{c^{1:\ell}(\mathbf{x})} \ \mathrm{d}\lambda_{d-\ell}(\mathbf{y}) \mathrm{d}\mu_{C^{1:\ell}}(\mathbf{x}) \\
			&= \int_{\mathbb{I}^\ell} \int_{\mathbb{I}^{d-\ell}} \frac{c^{1:d}(\mathbf{x},\mathbf{y})}{c^{1:\ell}(\mathbf{x})} \cdot c^{1:\ell}(\mathbf{x}) \ \mathrm{d}\lambda_{d-\ell}(\mathbf{y}) \mathrm{d}\lambda_\ell(\mathbf{x}) = \int_{\mathbb{I}^d} c^{1:d}(\mathbf{x},\mathbf{y}) \ \mathrm{d}\lambda_d(\mathbf{x},\mathbf{y}) = \mu_C^\text{abs}(\mathbb{I}^d)
		\end{align*}
		completing the proof.
	\end{proof}
	The following corollary is immediate:
	\begin{Cor}\label{cor:arch:singular}
		A $d$-dimensional Archimedean copula $C$ is singular if, and only if, every uni- and multivariate Markov kernel is singular w.r.t. the appropriate Lebesgue measure.
	\end{Cor}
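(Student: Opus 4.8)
The claim is essentially a packaging of Theorem~\ref{th:singular:kernels} once one recalls, as noted in Section~\ref{sec:kernel:archimedean}, that every lower-dimensional marginal $C^{1:\ell}$ of an Archimedean copula $C$ is absolutely continuous. Here ``uni- and multivariate Markov kernels'' means precisely the $\ell$-Markov kernels for $\ell\in\{1,\dots,d-1\}$ (the kernel being a univariate distribution when $\ell=d-1$ and a higher-dimensional one otherwise), so it suffices to show, for each such $\ell$, that $C$ is singular w.r.t.\ $\lambda_d$ if, and only if, $K_C(\mathbf{x},\cdot)\perp\lambda_{d-\ell}$ for $\mu_{C^{1:\ell}}$-almost every $\mathbf{x}$.

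For $\ell\in\{2,\dots,d-1\}$ this is immediate: $C^{1:\ell}$ is absolutely continuous, so Theorem~\ref{th:singular:kernels} applies verbatim and gives exactly this equivalence. The only boundary case not literally covered is $\ell=1$; but there $C^{1:1}$ is the uniform distribution on $\mathbb{I}$, hence absolutely continuous with $\mu_{C^{1:1}}=\lambda_1$, and the proof of Theorem~\ref{th:singular:kernels} carries over word for word --- the forward implication by disintegrating $\mu_C$ over the first coordinate and using Fubini to control the first-coordinate sections of a carrying $\lambda_d$-null set, and the converse by integrating the identity $K_C^{\mathrm{abs}}(x,\mathbb{I}^{d-1})=0$ against $\lambda_1$ with the help of Lemma~\ref{lem:abs:cts:kernel}. (Alternatively, for $\ell=1$ one can read singularity of $K_\psi(x_1,\cdot)$ off the explicit formula recorded after Theorem~\ref{th:interkernel:arch}.)

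Combining the two items finishes the argument: since ``$C$ is singular'' is a statement independent of $\ell$, $C$ is singular if, and only if, for \emph{every} $\ell\in\{1,\dots,d-1\}$ the $\ell$-Markov kernel is $\mu_{C^{1:\ell}}$-a.e.\ singular w.r.t.\ $\lambda_{d-\ell}$, which is also equivalent to this holding for \emph{some} single $\ell$. I do not expect any real difficulty here; the only point deserving a line of care is that Theorem~\ref{th:singular:kernels} is stated for $\ell\ge2$, so the endpoint $\ell=1$ has to be disposed of by the same reasoning (which it admits without change).
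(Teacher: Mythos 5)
Your argument is correct and is essentially the paper's own, which simply declares the corollary ``immediate'' from Theorem~\ref{th:singular:kernels}; you are usefully more careful in observing that Theorem~\ref{th:singular:kernels} as stated only covers $\ell\in\{2,\dots,d-1\}$ and in supplying the (unchanged) argument for the endpoint $\ell=1$, where $\mu_{C^{1:1}}=\lambda_1$ and Lemma~\ref{lem:abs:cts:kernel} together with the disintegration/Fubini step go through verbatim.
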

	
	\begin{Rem}\label{rem:singular}
		The result in Corollary \ref{cor:arch:singular} is in strong contrast to the general case: Consider, for instance, the simple example of the $3$-dimensional copula $B(x,y,z) = z\cdot\min(x,y)$ with marginals $B^{12} = M, B^{13}=B^{23}=\Pi_2$ as in Figure \ref{fig:ex:singular}. Then for $\lambda_1$-a.e. $x\in\mathbb{I}$ the probability measure $K_B(x, \cdot)$ on $\mathcal{B}(\mathbb{I}^2)$ is singular w.r.t. $\lambda_2$ while for $\mu_M$-a.e. $(x,y)\in\mathbb{I}^2$ the probability measure $K_B(x,y,\cdot)$ on $\mathcal{B}(\mathbb{I})$ is absolutely continuous w.r.t. $\lambda_1$.
		\begin{figure}[htp!]
			\centering 	
			\includegraphics[width=0.35\textwidth]{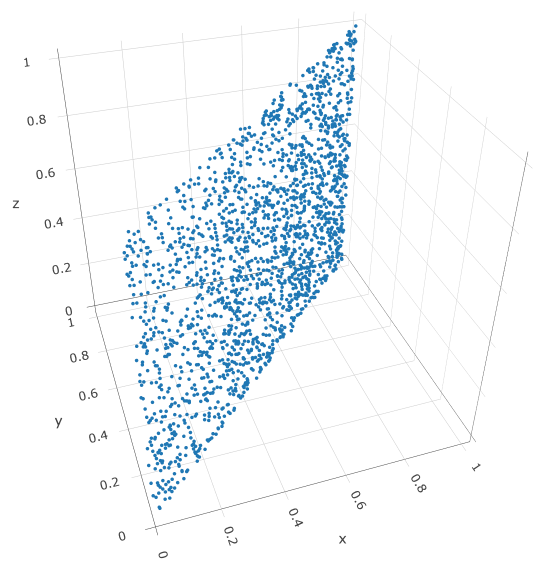}
			\caption{Sample of size $n=2000$ of the $3$-dimensional copula $B$ as considered in Remark \ref{rem:singular}.}
			\label{fig:ex:singular}
		\end{figure}
	\end{Rem}

	\section{Conditional Archimedean copulas from a Markov kernel perspective}\label{sec:conditional:copulas}
	As the dependence structure of a random vector may be heavily impacted by a (vector of) covariate(s) the introduction of conditional copulas is natural to determine the variation in dependence when changing the values of the covariate(s). 
	We particularly follow \cite{ConditionalCopula} where the authors study various properties of conditional Archimedean copulas. In contrast, however, we will introduce these conditional dependence structures largely from a Markov kernel perspective which leads to several beautiful relations, and present an alternative proof of the truncation invariance property while working with general strict $d$-monotone generators instead of requiring complete monotonicity. Furthermore, the results of the previous sections are used to show that convergence behaviour, singularity as well as conditional increasingness carry over from the original Archimedean copulas to their conditional copulas. Additionally, we demonstrate that in this scenario estimating (the generator of) the original copula suffices to get an estimator of its conditional copula. From there, we propose a  \textquoteleft conditional dependence' measure (in analogy to conditional Kendall's tau or conditional Spearman's rho, see, e.g., \cite{gijbels_partial_copulas, gijbels_conditionalCop}) as an alternative way to obtain additional information of the dependence in data from an Archimedean model given covariate values.

	We start with the following simple lemma where we conduct a slight abuse of notation and simply set 
	\begin{align*}
		K_C(\mathbf{x},[0,y]) = K_C(\mathbf{x},\mathbb{I}\times\ldots\times\mathbb{I}\times [0,y]\times\mathbb{I}\ldots\times\mathbb{I})
	\end{align*}
	for $\mathbf{x}\in\mathbb{I}^\ell,y\in\mathbb{I}$.
	
	\begin{Lemma}\label{lem:connection:kernels}
		Suppose that $C$ is a $d$-dimensional Archimedean copulas with strict generator $\psi$ and let $\ell\in\{1,\ldots,d-1\}$. Then
		\begin{align*}
			K_C(\mathbf{x}, [0,y_{\ell+1}]\times\ldots\times[0,y_d]) &= K_C(\mathbf{x},[0,C^{\ell+1:\ell+2}(y_{\ell+1},y_{\ell+2})] \times\ldots\times [0,C^{d-1:d}(y_{d-1}, y_d)]) \\ 
			&= \ldots \\
			&= K_C(\mathbf{x},[0,y_{\ell+1}]\times[0,C^{\ell+2:d}(y_{\ell+2}, \ldots, y_d)]) \\
			&= \ldots \\
			&= K_C(\mathbf{x},[0,C^{\ell+1:d}(y_{\ell+1}, \ldots, y_d)]).
		\end{align*}
		Therefore, consolidating the rectangles via the appropriate marginal copulas we can vary the dimension of the measured set of the Markov kernel.
	\end{Lemma}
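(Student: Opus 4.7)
The strategy is to reduce the chain of equalities to a routine unpacking of the closed-form Markov kernel derived in Theorem~\ref{th:interkernel:arch}, using the key identity
\begin{align*}
\varphi\bigl(C^{J}(\mathbf{y}_J)\bigr) = \sum_{j \in J} \varphi(y_j),
\end{align*}
which holds for every index set $J$ because the marginal $C^{J}$ is itself Archimedean with generator $\psi$ and, under strictness ($\varphi(0) = \infty$), $\varphi$ is a genuine inverse of $\psi$ on $\mathbb{I}$.

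First I would dispose of the trivial cases. For strict $C$ the zero-level set satisfies $L_0^{1:\ell} = \{\mathbf{x}: \min(\mathbf{x}) = 0\}$ and $\text{int}\,L_0 = \varnothing$, so whenever $\min(\mathbf{x}) \in \{0,1\}$ every term in the claimed chain equals $1$ directly from the kernel formula. For $\min(\mathbf{x}) \in (0,1)$ Theorem~\ref{th:interkernel:arch} yields
\begin{align*}
K_C(\mathbf{x},[\mathbf{0},\mathbf{y}]) = \frac{\psi^{(\ell)}\bigl(\sum_{i=1}^\ell \varphi(x_i) + \sum_{j=1}^{d-\ell}\varphi(y_j)\bigr)}{\psi^{(\ell)}\bigl(\sum_{i=1}^\ell \varphi(x_i)\bigr)},
\end{align*}
and, since $\varphi(1) = 0$, the abuse of notation pads missing coordinates by $\mathbb{I}$ at no cost: evaluating $K_C(\mathbf{x}, \cdot)$ on a padded rectangle $[0,u_1]\times\cdots\times[0,u_k]$ yields a value that depends on $(u_1,\ldots,u_k)$ only through the sum $\sum_{r=1}^k\varphi(u_r)$.

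The remainder is bookkeeping. For each block $J$ appearing in the statement, set $u_J := C^J(\mathbf{y}_J)$, which by the identity above gives $\varphi(u_J) = \sum_{j\in J}\varphi(y_j)$. Replacing the factors $\prod_{j\in J}[0,y_j]$ by the single consolidated factor $[0, u_J]$ therefore leaves the total sum $\sum_j \varphi(y_j)$ invariant, and hence preserves the kernel value. Applying this to the pairs $\{\ell{+}1,\ell{+}2\},\ldots,\{d{-}1,d\}$ produces the first equality in the chain, and iteratively consolidating larger blocks yields the remaining equalities down to the single-factor form $[0, C^{\ell+1:d}(y_{\ell+1},\ldots,y_d)]$. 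The boundary case $y_j = 0$ is handled by the conventions $\varphi(0) = \infty$ and $\psi^{(\ell)}(\infty) = 0$, which make both sides vanish simultaneously.

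I do not expect any real obstacle: once the closed form of the kernel is in hand and the abuse of notation is pinned down, the identity is essentially algebraic. The only subtlety requiring attention is to verify that the $\mathbb{I}$-padding interpretation makes each consolidation step unambiguous, so that comparing rectangles of different dimensions is well-defined.
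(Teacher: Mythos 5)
Your proof is correct and follows essentially the same route as the paper's: both invoke strictness to get $\varphi(\psi(z))=z$, plug the explicit kernel formula from Theorem~\ref{th:interkernel:arch} into the $\mathbb{I}$-padded rectangles, and observe that each consolidation $\varphi(C^J(\mathbf{y}_J))=\sum_{j\in J}\varphi(y_j)$ leaves the inner sum unchanged. You spell out the trivial cases $\min(\mathbf{x})\in\{0,1\}$ and the $y_j=0$ boundary more explicitly than the paper, but the underlying argument is identical.
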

	\begin{proof}
		Since $\psi$ is strict we have $\varphi(\psi(z)) = z$ for all $z\in[0,\infty)$ so is is straightforward to verify 
		\begin{align*}
			K_C(\mathbf{x}, [0,y_{\ell+1}]\times\ldots\times[0,y_d]) &= \frac{\psi^{(\ell)}\left( \sum_{i=1}^\ell\varphi(x_i) + \sum_{j=\ell+1}^d \varphi(y_j) \right)}{\psi^{(\ell)}\left( \sum_{i=1}^\ell\varphi(x_i) \right)} \\
			&= \frac{\psi^{(\ell)}\left( \sum_{i=1}^\ell\varphi(x_i) + \varphi\left(C^{\ell+1:d}(y_{\ell+1},\ldots,y_d)\right) \right)}{\psi^{(\ell)}\left( \sum_{i=1}^\ell\varphi(x_i) \right)} \\
			&= K_C(\mathbf{x},[0,C^{\ell+1:d}(y_{\ell+1}, \ldots, y_d)])
		\end{align*}
		and similarly for all other expressions. Note that for $\ell=d-1$ the same holds true by using the left-hand derivative $D^-\psi^{(d-2)}$ of the $(d-2)$-nd derivative of $\psi$.
	\end{proof}
	
	From now on we consider a strict $d$-dimensional Archimedean copula $C$ and let $\ell\in\{1,\ldots,d-2\}$ (notice that therefore $\psi^{(\ell)}$ is continuous and invertible). The $\ell$-Markov kernel of $C$ (as given in eq. \eqref{eq:interkernel:arch}) being a $(d-\ell)$-dimensional distribution function for any fixed $\mathbf{x}\in(0,1)^\ell$, Sklar's Theorem (see, e.g., \cite{Principles}) yields the existence of some copula $C^\mathbf{x}\in\mathcal{C}^{d-\ell}$ such that both,
	\begin{align*}
		&K_C(\mathbf{x},[\mathbf{0},\mathbf{y}]) = C^\mathbf{x}(K_C(\mathbf{x},[0,y_1]), \ldots, K_C(\mathbf{x},[0,y_{d-\ell}])), \\
		&C^\mathbf{x}(u_1,\ldots, u_{d-\ell}) = K_C(\mathbf{x}, [0,g_\mathbf{x}^{-1}(u_1)]\times\ldots\times[0,g_\mathbf{x}^{-1}(u_{d-\ell})]),
	\end{align*}
	where, again slightly abusing notation, $g_\mathbf{x}(y) = K_C(\mathbf{x},[0,y])$, $y\in[0,1]$, is the marginal distribution of $K_C(\mathbf{x},\cdot)$ and $g_\mathbf{x}^{-1}$ its inverse. Knowing the form of all Markov kernels, we compute  $g_\mathbf{x}^{-1}$ as
	\begin{align*}
		g_\mathbf{x}^{-1}(u) = \psi\left( \psi^{(\ell)^{-1}}\left( u\cdot \psi^{(\ell)}\left( \sum_{i=1}^\ell \varphi(x_i) \right) \right) - \sum_{i=1}^\ell\varphi(x_i) \right)
	\end{align*}
	for $u\in\mathbb{I}$ and 
	\begin{align*}
		C^\mathbf{x}(\mathbf{u}) = \frac{\psi^{(\ell)}\left( \sum_{i=1}^\ell\varphi(x_i) + \sum_{j=1}^{d-\ell}\varphi(g_\mathbf{x}^{-1}(u_j)) \right)}{\psi^{(\ell)}\left( \sum_{i=1}^\ell\varphi(x_i) \right)}
	\end{align*}
	for $\mathbf{u}\in\mathbb{I}^{d-\ell}$. Notice that according to Lemma \ref{lem:connection:kernels} we can additionally write
	\begin{align*}
		C^\mathbf{x}(\mathbf{u}) = K_C(\mathbf{x}, [0,C^{1:d-\ell}(g_\mathbf{x}^{-1}(u_1),\ldots, g_\mathbf{x}^{-1}(u_{d-\ell}))])
	\end{align*}
	which will turn out to be useful in the sequel.
	
	In \cite{ConditionalCopula} the surprising fact is stated that $C^\mathbf{x}$ is again of Archimedean type if $C$ is induced by a completely monotone generator (the so-called truncation invariance property). Not only will we get rid of the restriction to completely monotone generators but offer an alternative approach to deriving this assertion. We pick up the idea in \cite{juri_taildepdenceCopula} where the authors used the well-known result \cite[Theorem 4.1.6]{Nelsen}, saying that a \emph{bivariate} associative copula $B$ fulfilling $B(x,x)<x$ for all $x\in(0,1)$ is necessarily Archimedean, to prove that the so-called lower tail dependence copula of Archimedean copulas is again Archimedean. Similar conditions hold in the general case. In fact, according to \cite{multiAsso} a $d$-dimensional copula $C$ that is associative and satisfies $C(x,\ldots,x) < x$ for every $x\in(0,1)$ is necessarily Archimedean. Thereby, a $d$-variate function $f:\mathbb{I}^d\to\mathbb{I}$ is defined to be associative if, and only if, for all $x_1,x_2,\ldots, x_{2d-1}\in\mathbb{I}$ we have
	\begin{align*}
		f(f(x_1,\ldots,x_d),x_{d+1},\ldots, x_{2d-1}) &= f(x_1,f(x_2,\ldots,x_{d+1}),x_{d+2},\ldots, x_{2d-1}) \\
		&= \ldots \\
		&= f(x_1,\ldots,x_{d-1},f(x_d,\ldots,x_{2d-1})).
	\end{align*}
	We will now use this characterization to obtain a nice and elementary derivation of the fact that the conditional copula of an Archimedean copula is again Archimedean:
	\begin{Prop}
		Suppose $C$ is a strict Archimedean copula and $\ell\in\{1,\ldots,d-2\}$. For any $\mathbf{x}\in(0,1)^\ell$ the conditional copula $C^\mathbf{x}$ of $C$ is a $(d-\ell)$-dimensional Archimedean copula.
	\end{Prop}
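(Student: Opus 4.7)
The plan is to apply the characterization from \cite{multiAsso} highlighted in the preceding discussion: a $d$-dimensional copula is Archimedean if and only if it is associative and satisfies the strict diagonal inequality $D(u,\ldots,u) < u$ for every $u\in(0,1)$. Thus it suffices to verify these two properties for $C^{\mathbf{x}}$. The key structural observation driving both steps is that, combining the representation
$$C^{\mathbf{x}}(\mathbf{u}) = K_C\bigl(\mathbf{x}, [0, C^{1:d-\ell}(g_{\mathbf{x}}^{-1}(u_1), \ldots, g_{\mathbf{x}}^{-1}(u_{d-\ell}))]\bigr)$$
noted immediately after the definition of $C^{\mathbf{x}}$ with the identity $K_C(\mathbf{x},[0,y]) = g_{\mathbf{x}}(y)$, one obtains the compact conjugation formula $C^{\mathbf{x}} = g_{\mathbf{x}} \circ C^{1:d-\ell} \circ (g_{\mathbf{x}}^{-1},\ldots,g_{\mathbf{x}}^{-1})$. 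Moreover, setting the last $\ell$ arguments of $C$ equal to $1$ gives $C^{1:d-\ell}(\mathbf{y}) = \psi\bigl(\sum_{j=1}^{d-\ell}\varphi(y_j)\bigr)$, so $C^{1:d-\ell}$ is itself a strict $(d-\ell)$-dimensional Archimedean copula with generator $\psi$.

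For associativity I would apply $C^{\mathbf{x}}$ twice, cancel every interior pair $g_{\mathbf{x}}^{-1}\circ g_{\mathbf{x}} = \mathrm{id}$, and invoke associativity of the Archimedean copula $C^{1:d-\ell}$ (which is immediate from $\varphi\circ\psi = \mathrm{id}$ in the strict case); the outer $g_{\mathbf{x}}$ and innermost $g_{\mathbf{x}}^{-1}$'s then transport this associativity to $C^{\mathbf{x}}$. For the diagonal, specializing the conjugation formula to $u_1 = \cdots = u_{d-\ell} = u \in (0,1)$ yields
$$C^{\mathbf{x}}(u,\ldots,u) = g_{\mathbf{x}}\bigl(\psi((d-\ell)\,\varphi(g_{\mathbf{x}}^{-1}(u)))\bigr).$$
Writing $v := g_{\mathbf{x}}^{-1}(u) \in (0,1)$, strictness of $\psi$ combined with $d-\ell \geq 2$ gives $(d-\ell)\varphi(v) > \varphi(v) > 0$ and hence $\psi((d-\ell)\varphi(v)) < \psi(\varphi(v)) = v$; applying the strictly increasing $g_{\mathbf{x}}$ preserves this strict inequality, so $C^{\mathbf{x}}(u,\ldots,u) < u$. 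The characterization from \cite{multiAsso} then forces $C^{\mathbf{x}}$ to be Archimedean.

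The main technical point I expect to have to justify carefully is that $g_{\mathbf{x}}:[0,1]\to[0,1]$ is genuinely a strictly increasing continuous bijection, so that $g_{\mathbf{x}}^{-1}$ is well defined with $g_{\mathbf{x}}^{-1}\circ g_{\mathbf{x}} = \mathrm{id}$ and none of the strict inequalities above collapses. Continuity and the boundary values $g_{\mathbf{x}}(0)=0$ and $g_{\mathbf{x}}(1)=1$ are immediate from the explicit formula $g_{\mathbf{x}}(y) = \psi^{(\ell)}(A+\varphi(y))/\psi^{(\ell)}(A)$ with $A = \sum_{i=1}^\ell \varphi(x_i) \in (0,\infty)$ together with $\psi^{(\ell)}(\infty) = 0$ (ensured by strictness), while strict monotonicity reduces to strict monotonicity of $(-1)^\ell \psi^{(\ell)}$ on $[A,\infty)$, which can be extracted from the properties of the generator in the strict case. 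Once this regularity is in place, the two steps above together with the \cite{multiAsso} characterization close the argument.
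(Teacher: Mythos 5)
Your proposal is correct and follows essentially the same route as the paper: both verify associativity and the strict diagonal inequality and then invoke the Stup\v{n}anov\'{a}--Koles\'{a}rov\'{a} characterization from \cite{multiAsso}. Your conjugation formula $C^{\mathbf{x}} = g_{\mathbf{x}}\circ C^{1:d-\ell}\circ(g_{\mathbf{x}}^{-1},\ldots,g_{\mathbf{x}}^{-1})$ is a clean way to organize exactly the cancellation $\varphi(g_{\mathbf{x}}^{-1}(C^{\mathbf{x}}(\mathbf{u}))) = \sum_j \varphi(g_{\mathbf{x}}^{-1}(u_j))$ that the paper computes directly, and your diagonal argument (via $\psi((d-\ell)\varphi(v)) < v$ for $v \in (0,1)$, $d-\ell\ge 2$) matches the paper's check; the one point you flag as needing care --- that $g_{\mathbf{x}}$ is a strictly increasing bijection, i.e.\ $(-1)^\ell\psi^{(\ell)}$ is strictly decreasing for strict $\psi$ --- is indeed used implicitly in the paper as well (the text asserts $\psi^{(\ell)}$ is ``continuous and invertible'') at the same level of rigor.
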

	\begin{proof}
		We start with associativity and suppose $u_1,\ldots,u_{2(d-\ell)-1}\in\mathbb{I}$. Plugging in and cancelling gives 
		\begin{align*}
			\varphi(g_\mathbf{x}^{-1}&(C^\mathbf{x}(u_1,\ldots,u_{d-\ell}))) \\
			&=  \varphi\left(\psi\left( \psi^{(\ell)^{-1}}\left( \tfrac{\psi^{(\ell)}\left( \sum_{i=1}^\ell\varphi(x_i) + \sum_{j=1}^{d-\ell}\varphi(g_\mathbf{x}^{-1}(u_j)) \right)}{\psi^{(\ell)}\left( \sum_{i=1}^\ell\varphi(x_i) \right)} \cdot \psi^{(\ell)}\left( \sum_{i=1}^\ell \varphi(x_i) \right) \right)  - \sum_{i=1}^\ell\varphi(x_i) \right) \right) \\
			&= \sum_{j=1}^{d-\ell}\varphi(g_\mathbf{x}^{-1}(u_j)).
		\end{align*}
		Thus, it follows that 
		\begin{align*}
			C^\mathbf{x}(C^\mathbf{x}(u_1,\ldots,u_{d-\ell}),& u_{d-\ell+1},\ldots,u_{2(d-\ell)-1}) \\
			&= \tfrac{\psi^{(\ell)}\left( \sum_{i=1}^\ell\varphi(x_i) + \sum_{j=1}^{d-\ell}\varphi(g_\mathbf{x}^{-1}(u_j)) + \sum_{r=d-\ell+1}^{2(d-\ell)-1}\varphi(g_\mathbf{x}^{-1}(u_r)) \right)}{\varphi^{(\ell)}\left(\sum_{i=1}^\ell \varphi(x_i)\right)} \\
			&= \tfrac{\psi^{(\ell)}\left( \sum_{i=1}^\ell\varphi(x_i) + \sum_{j=1}^{2(d-\ell)-1}\varphi(g_\mathbf{x}^{-1}(u_j)) \right)}{\varphi^{(\ell)}\left(\sum_{i=1}^\ell \varphi(x_i)\right)} \\
			&= C^\mathbf{x}(u_1,C^\mathbf{x}(u_2,\ldots,u_{d-\ell+1}), u_{d-\ell+2},\ldots, u_{2(d-\ell)-1}) \\
			&= \ldots \\
			&= C^\mathbf{x}(u_1,\ldots,u_{d-\ell-1}, C^\mathbf{x}(u_{d-\ell}, \ldots, u_{2(d-\ell)-1}))
		\end{align*}
		and $C^\mathbf{x}$ is associative. For the diagonal property let $u\in(0,1)$ and assume $\ell$ is even, so that $\psi^{(\ell)}\geq 0$ and decreasing (the odd case follows analogously). Then it is straightforward to verify that $C^\mathbf{x}(u,\ldots, u) < u$ holds if, and only if,
		\begin{align*}
			\psi^{(\ell)}&\left( \sum_{i=1}^\ell\varphi(x_i) + (d-\ell)\psi^{{(\ell)}^{-1}}\left( u \cdot \psi^{(\ell)}\left(\sum_{i=1}^\ell\varphi(x_i)\right) \right) - (d-\ell)\sum_{i=1}^\ell\varphi(x_i) \right) \\
			&< u\cdot \psi^{(\ell)}\left(\sum_{i=1}^\ell\varphi(x_i)\right)
		\end{align*}
		which is the case if, and only if, $u < 1$ as desired. Now, as direct consequence of \cite[Theorem 4.2]{multiAsso} we have $C^\mathbf{x}\in\mathcal{C}^{d-\ell}_\text{ar}$.
	\end{proof}
	
	In contrast to \cite{ConditionalCopula}, we use a multivariate analogue of Nelsen's method \cite[Theorem 4.3.8]{Nelsen} to determine the (inverse) generator of $C^\mathbf{x}$ and give a representation in terms of the Markov kernel which will lead to nice interrelations between (conditional) Archimedean generators:
	
	\begin{Lemma}
		Suppose $C$ is a strict $d$-dimensional Archimedean copula with generator $\psi$ and let $\ell\in\{1,\ldots,d-2\}$. For any $\mathbf{x}\in(0,1)^\ell$ the Archimedean generator of the conditional copula $C^\mathbf{x}$ of $C$ is given by
		\begin{align*}
			\psi^\mathbf{x}(z) = K_C(\mathbf{x},[0,\psi(z)])
		\end{align*}
		for every $z\in[0,\infty)$.
	\end{Lemma}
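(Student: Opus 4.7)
The plan is to directly evaluate $K_C(\mathbf{x}, [0, \psi(z)])$, observe that the closed form obtained is an Archimedean generator in dimension $d-\ell$, and then verify via its pseudo-inverse that it indeed generates $C^\mathbf{x}$.

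First, I apply Lemma \ref{lem:connection:kernels} with all but one $y$-coordinate set to $1$, identifying the one-dimensional box $[0,\psi(z)]$ with $[0,\psi(z)]\times\mathbb{I}^{d-\ell-1}$; combined with Theorem \ref{th:interkernel:arch} this yields
\begin{align*}
K_C(\mathbf{x}, [0, \psi(z)]) = \frac{\psi^{(\ell)}\left( s + \varphi(\psi(z)) \right)}{\psi^{(\ell)}(s)}, \qquad s := \sum_{i=1}^\ell \varphi(x_i).
\end{align*}
Strictness of $\psi$ gives $\varphi\circ\psi = \mathrm{id}_{[0,\infty)}$, so the candidate generator takes the closed form $\psi^\mathbf{x}(z) = \psi^{(\ell)}(s+z)/\psi^{(\ell)}(s)$.

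Next I check that $\psi^\mathbf{x}$ fulfils the definition of a $(d-\ell)$-Archimedean generator from Section \ref{sec:2:preliminaries}. Since $\ell \leq d-2$ the derivative $\psi^{(\ell)}$ is continuous on $(0,\infty)$ with the correct alternating sign and tends to zero at infinity; from this $\psi^\mathbf{x}$ inherits continuity, the normalization $\psi^\mathbf{x}(0)=1$, the limit $\lim_{z\to\infty}\psi^\mathbf{x}(z) = 0$, and strict decreasingness on $[0,\infty)$. The $(d-\ell)$-monotonicity reduces to the $d$-monotonicity of $\psi$ via the identity $(\psi^\mathbf{x})^{(k)}(z) = \psi^{(\ell+k)}(s+z)/\psi^{(\ell)}(s)$ for $k=0,\ldots,d-\ell-2$, since the alternating-sign, non-increasing and convex conditions on the $(d-2)$-nd derivative of $\psi$ are preserved by the translation $z\mapsto s+z$ and the positive rescaling by $\psi^{(\ell)}(s)^{-1}$.

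Finally, I verify that $\psi^\mathbf{x}$ generates $C^\mathbf{x}$. Its pseudo-inverse $\varphi^\mathbf{x}(u) = \psi^{(\ell)^{-1}}(u \cdot \psi^{(\ell)}(s)) - s$ coincides with $\varphi\circ g_\mathbf{x}^{-1}$ by the explicit formula for $g_\mathbf{x}^{-1}$ displayed just before the statement, so
\begin{align*}
\psi^\mathbf{x}\left( \sum_{j=1}^{d-\ell} \varphi^\mathbf{x}(u_j) \right) = \frac{\psi^{(\ell)}\left( s + \sum_{j=1}^{d-\ell}\varphi(g_\mathbf{x}^{-1}(u_j)) \right)}{\psi^{(\ell)}(s)} = C^\mathbf{x}(\mathbf{u})
\end{align*}
by the explicit formula for $C^\mathbf{x}$ derived above. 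The main obstacle is the monotonicity and limit verification in the second step; once the closed form is in hand, the remaining work consists of cancellation of $\psi^{(\ell)}(s)$ and the strict-generator identity $\varphi\circ\psi = \mathrm{id}$.
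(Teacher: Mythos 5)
Your proof is correct, and it takes a slightly different route than the paper's. The paper argues in the style of Nelsen's Theorem 4.3.8: it presupposes (from the preceding Proposition) that $C^\mathbf{x}$ is Archimedean with some generator $\psi^\mathbf{x}$, forms the ratio $(\varphi^\mathbf{x}(u_1))'/(\varphi^\mathbf{x}(u_2))'$, integrates to get $\varphi^\mathbf{x} = \alpha_\mathbf{x}\,\varphi\circ g_\mathbf{x}^{-1}$, normalizes $\alpha_\mathbf{x}=1$, and then carries out exactly the same verifications you do (generator properties, identity $\psi^\mathbf{x}(\sum\varphi^\mathbf{x}(u_j)) = C^\mathbf{x}(\mathbf{u})$ via Lemma \ref{lem:connection:kernels}, and $(d-\ell)$-monotonicity via \cite[Proposition 2.3]{multiArchNeslehova}). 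You instead skip the ratio/integration heuristic and go straight to the closed form $\psi^\mathbf{x}(z) = \psi^{(\ell)}(s+z)/\psi^{(\ell)}(s)$ by evaluating the kernel at $[0,\psi(z)]$ and using $\varphi\circ\psi = \mathrm{id}$, then verify it is a $(d-\ell)$-monotone generator and that $\varphi^\mathbf{x} = \varphi\circ g_\mathbf{x}^{-1}$, which yields the generating identity. Your version is actually more self-contained, as it does not rely on the earlier Proposition for the a priori existence of a generator — the verification step re-establishes Archimedeanity of $C^\mathbf{x}$ on its own. One small imprecision: you describe division by $\psi^{(\ell)}(s)$ as a "positive rescaling", but $\psi^{(\ell)}(s)$ has sign $(-1)^\ell$; the sign bookkeeping still works out since $(\psi^\mathbf{x})^{(k)}(z) = \psi^{(\ell+k)}(s+z)/\psi^{(\ell)}(s)$ has sign $(-1)^{\ell+k}/(-1)^\ell = (-1)^k$, so this is harmless but worth stating carefully.
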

	\begin{proof}
		Similarly to \cite[Theorem 4.3.8]{Nelsen} it holds that for $\mathbf{u}\in\mathbb{I}^{d-\ell}$ we have
		\begin{align*}
			\frac{(\varphi^\mathbf{x}(u_1))'}{(\varphi^\mathbf{x}(u_2))'} 
			= \frac{\varphi'(g_\mathbf{x}^{-1}(u_1))\cdot (g_\mathbf{x}^{-1})'(u_1)}{ \varphi'(g_\mathbf{x}^{-1}(u_2))\cdot (g_\mathbf{x}^{-1})'(u_2)}
		\end{align*}
		for $\lambda$-almost all $u_1,u_2\in(0,1)$. Hence, $(\varphi^{\mathbf{x}})'(s) = \alpha_\mathbf{x} \varphi'(g_\mathbf{x}^{-1}(s)) (g_\mathbf{x}^{-1})'(s)$ where $\alpha_\mathbf{x}$ is a constant only depending on $\mathbf{x}$. Integrating and substituting $t = g_\mathbf{x}^{-1}(s)$ yields
		\begin{align*}
			\varphi^\mathbf{x}(s) = -\int_{[s,1]}(\varphi^\mathbf{x})'(t) \ \mathrm{d}\lambda(t) = \alpha_\mathbf{x}\int_{[g_\mathbf{x}^{-1}(s), 1]} \varphi'(t) \ \mathrm{d}\lambda(t) = \alpha_\mathbf{x} \varphi(g_\mathbf{x}^{-1}(s)).
		\end{align*}
		As Archimedean generators are only unique up to a multiplicative constant (see, e.g., \cite{Nelsen}) we set $\alpha_\mathbf{x} = 1$ and arrive at 
		\begin{align*}
			\psi^\mathbf{x}(z) = K_C(\mathbf{x},[0,\psi(z)]).
		\end{align*} 
		It follows from the properties of $K_C(\mathbf{x},\cdot)$ and $\psi^{(\ell)}$ that $\psi^\mathbf{x}$ is strictly decreasing and continuous on $[0,\infty)$, fulfills $\psi^\mathbf{x}(0) = 1$ as well as $\lim_{z\to\infty}\psi^\mathbf{x}(z) = 0$. Thus, for $\mathbf{u}\in\mathbb{I}^{d-\ell}$ Lemma \ref{lem:connection:kernels} gives
		\begin{align*}
			\psi^\mathbf{x}\left(\sum_{i=1}^{d-\ell}\varphi^\mathbf{x}(u_i)\right) &= K_C\left(\mathbf{x}, \left[0, \psi\left( \sum_{j=1}^{d-\ell}\varphi(g_\mathbf{x}^{-1}(u_j)) \right)\right]\right) \\
			&= K_C\left( \mathbf{x}, \left[ 0, C^{1:d-\ell}(g_\mathbf{x}^{-1}(u_1), \ldots, g_\mathbf{x}^{-1}(u_{d-\ell})) \right] \right) \\
			&= K_C(\mathbf{x}, [0,g_\mathbf{x}^{-1}(u_1)]\times\ldots\times[0,g_\mathbf{x}^{-1}(z_{d-\ell})]) = C^\mathbf{x}(\mathbf{u}).
		\end{align*}
		Observe that $(d-\ell)$-monotonicity of $\psi^\mathbf{x}$ is also inherited from $\psi$: In fact, according to \cite[Proposition 2.3]{multiArchNeslehova} it suffices to check that $(-1)^{d-\ell-2}(\psi^\mathbf{x})^{(d-\ell-2)}$ exists on $(0,\infty)$, is non-negative, decreasing and convex. Writing
		\begin{align*}
			(-1)^{d-\ell-2}(\psi^\mathbf{x})^{(d-\ell-2)}(z) &= (-1)^{d-\ell-2}\frac{\psi^{(d-2)}\left( \sum_{i=1}^\ell\varphi(x_i) + z \right)}{\psi^{(\ell)}\left( \sum_{i=1}^\ell\varphi(x_i) \right)} \\
			&= \frac{(-1)^{d-2}\psi^{(d-2)}\left( \sum_{i=1}^\ell\varphi(x_i) + z \right)}{(-1)^{\ell}\psi^{(\ell)}\left( \sum_{i=1}^\ell\varphi(x_i) \right)}
		\end{align*}
		all required properties follow directly.
	\end{proof}
	
	\begin{exa}\label{ex:cond:gen:sample}
		We consider the $3$-dimensional Gumbel copula with parameter $\vartheta$ which is generated by $\tilde{\psi}(z) = \exp(-z^{1/\vartheta})$. Thus, a normalized generator is given by $\psi_\vartheta(z) = \exp\left(-(z\cdot (-\log(1/2))^\vartheta)^{1/\vartheta}\right) = 2^{-z^{1/\vartheta}}$. Fixing $\vartheta = 3$ we calculated $\psi^x_5$ for several $x\in(0,1)$ and illustrated the results in Figure \ref{fig:ex:cond:gen:sample}. 
		\begin{figure}[htp!]
			\centering 	
			\includegraphics[width=\textwidth]{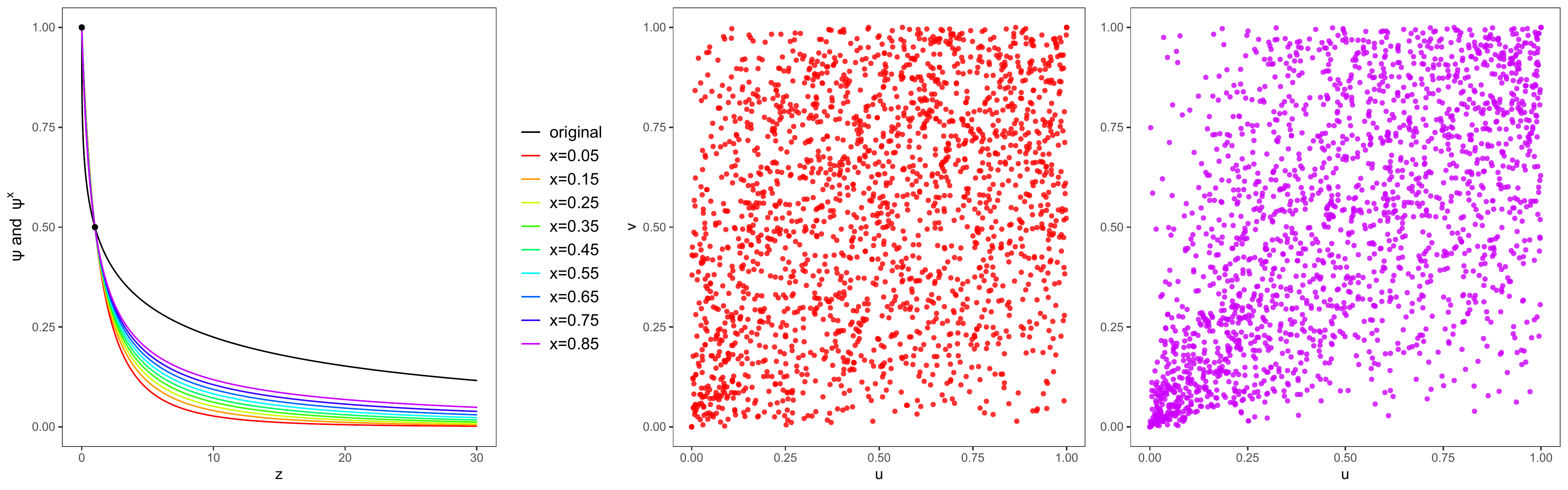}
			\caption{Archimedean generators of the original Gumbel copula $C_3$ and conditional generators $\psi_3^x$ for $x\in\{0.05,0.15,\ldots,0.85\}$ (left panel) and samples of size $n=2000$ of the conditional copulas $C_3^{0.05}$ and $C_3^{0.85}$ (middle and right panel) as considered in Example \ref{ex:cond:gen:sample}.}
			\label{fig:ex:cond:gen:sample}
		\end{figure}
		The left panel shows the original as well as the normalized conditional generators while the middle and right panels depict samples of $C^{0.05}_3$ and $C^{0.95}_3$, respectively.  
	\end{exa}

	\begin{Rem}
		Having the Markov kernel representation, the following connection between generator and conditional generator now is an immediate consequence of disintegration (cf. \cite[Section 4]{ConditionalCopula} for the $3$-dimensional case):
		\begin{align}\label{eq:connection:psi:psix}
			\psi(z) = \int_{\mathbb{I}^\ell} \psi^\mathbf{x}(z) \ \mathrm{d}\mu_{C^{1:\ell}}(\mathbf{x}).
		\end{align}
		The authors in \cite{ConditionalCopula} state that the conditional copula w.r.t.  any conditioning event $\mathbf{X}\in A\subseteq\mathbb{I}^\ell$ of an Archimedean copula is again Archimedean and, in particular, show that the corresponding inverse generator $\varphi^{[\mathbf{0},\mathbf{x}]}$ of the conditional copula $C^{[\mathbf{0},\mathbf{x}]}$ w.r.t. the conditioning $\mathbf{X} \leq \mathbf{x}$ is given by
		\begin{align*}
			\varphi^{[\mathbf{0},\mathbf{x}]}(z) = \varphi\left( z\cdot\psi\left( \sum_{i=1}^\ell \varphi(x_i) \right)  \right) - \varphi\left(\psi\left(\sum_{i=1}^\ell\varphi(x_i)\right)\right).
		\end{align*}
		Building upon that and rewriting $\varphi^{[\mathbf{0},\mathbf{x}]}$ we get $\varphi^{[\mathbf{0},\mathbf{x}]}(z) = \varphi\left( z\cdot C^{1:\ell}(\mathbf{x}) \right) - \varphi(C^{1:\ell}(\mathbf{x}))$ and hence
		\begin{align*}
			\psi^{[\mathbf{0},\mathbf{x}]} &= \frac{\psi(z+\varphi(C^{1:\ell}(\mathbf{x})))}{C^{1:\ell}(\mathbf{x})} = \frac{C^{1:\ell+1}(\mathbf{x}, \psi(z))}{C^{1:\ell}(\mathbf{x})}.		
		\end{align*}
		In analogy to eq. \eqref{eq:connection:psi:psix} disintegration now also yields the nice relation
		\begin{align*}
			\psi^{[\mathbf{0},\mathbf{x}]}(z) = \frac{1}{C^{1:\ell}(\mathbf{x})} \int_{[\mathbf{0},\mathbf{x}]} \psi^\mathbf{s}(z) \ \mathrm{d}\mu_{C^{1:\ell}}(\mathbf{s}).
		\end{align*}
	\end{Rem}
	
	\vspace*{0.3cm}
	
	It is well-known that imposing some regularity assumption (regular variation at infinity) on the generator $\psi$ of the Archimedean copula $C$, the conditional generator $\psi^\mathbf{x}$ converges to the Clayton copula with parameter $\rho/ (1+\rho\cdot \ell)$ as $\mathbf{x}\to\mathbf{0}$. From a different angle, given a sequence $(C_n)_{n\in\mathbb{N}}$ converging uniformly to $C$ within $\mathcal{C}^d_\text{ar}$ we now have the tools to answer the natural question whether the corresponding conditional copulas converge in that sense, too.
	
	\begin{theorem}\label{th:conv:conditional:cop}
		Suppose that $C_\psi,C_1,C_2,\ldots$ are $d$-dimensional Archimedean copulas with strict, normalized generators $\psi,\psi_1,\psi_2,\ldots$ and Markov kernels $K_\psi, K_1, K_2,\ldots$, respectively. If $(C_n)_{n\in\mathbb{N}}$ converges pointwise to $C$ then for $\ell\in\{1,\ldots,d-2\}$ and any fixed $\mathbf{x}\in(0,1)^\ell$ we have that for every $\mathbf{u}\in\mathbb{I}^{d-\ell}$ it holds
		\begin{align*}
			\lim_{n\to\infty} C^\mathbf{x}_n(\mathbf{u}) = C^\mathbf{x}_\psi(\mathbf{u}).
		\end{align*}
		
	\end{theorem}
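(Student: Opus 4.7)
The plan is to reduce the problem to pointwise convergence of the conditional Archimedean generators and then to assemble this into pointwise convergence of the conditional copulas via their closed-form representation. Concretely, the preceding Proposition shows that $C_n^\mathbf{x}$ and $C_\psi^\mathbf{x}$ are strict $(d-\ell)$-dimensional Archimedean copulas, and from the preceding Lemma combined with strictness their generators are
$$\psi_n^\mathbf{x}(z) = \frac{\psi_n^{(\ell)}\bigl(\sum_{i=1}^{\ell}\varphi_n(x_i)+z\bigr)}{\psi_n^{(\ell)}\bigl(\sum_{i=1}^{\ell}\varphi_n(x_i)\bigr)}, \quad \psi^\mathbf{x}(z) = \frac{\psi^{(\ell)}\bigl(\sum_{i=1}^{\ell}\varphi(x_i)+z\bigr)}{\psi^{(\ell)}\bigl(\sum_{i=1}^{\ell}\varphi(x_i)\bigr)}.$$
Since the representation $C_n^\mathbf{x}(\mathbf{u})=\psi_n^\mathbf{x}\bigl(\sum_{j=1}^{d-\ell}\varphi_n^\mathbf{x}(u_j)\bigr)$ is available, it suffices to prove pointwise convergence $\psi_n^\mathbf{x}\to\psi^\mathbf{x}$ on $[0,\infty)$ and $\varphi_n^\mathbf{x}\to\varphi^\mathbf{x}$ on $(0,1]$, and then to pass to the limit inside the composition.

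\textbf{Key steps.} Writing $S_n:=\sum_{i=1}^\ell\varphi_n(x_i)$ and $S:=\sum_{i=1}^\ell\varphi(x_i)$, Theorem \ref{th:multi:arch:paper} applied to the hypothesis $C_n\to C_\psi$ yields $S_n\to S\in(0,\infty)$ (since $\mathbf{x}\in(0,1)^\ell$) as well as pointwise convergence $\psi_n^{(m)}\to\psi^{(m)}$ on $(0,\infty)$ for every $m\in\{1,\ldots,d-2\}$. As $\psi_n^{(\ell)}$ is monotone (in sign $(-1)^\ell$) and $\psi^{(\ell)}$ is continuous on $(0,\infty)$, the monotone-convergence argument of \cite[Lemma~4.3]{convMultiArch}, already exploited in the proof of Lemma \ref{lem:1wcc:arch}, yields $\psi_n^{(\ell)}(t_n)\to\psi^{(\ell)}(t)$ for every sequence $t_n\to t\in(0,\infty)$. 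Specialising to $t_n=S_n+z$ and $t_n=S_n$ and using $\psi^{(\ell)}(S)\neq 0$ (otherwise monotonicity together with $\lim_{z\to\infty}\psi^{(\ell)}(z)=0$ would force $\psi^{(\ell)}\equiv 0$ on $[S,\infty)$, contradicting the form of the conditional copula) gives $\psi_n^\mathbf{x}(z)\to\psi^\mathbf{x}(z)$ for every $z\geq 0$. The analogous argument applied to $\psi_n^{(\ell)^{-1}}$, whose locally uniform convergence on the range of $\psi^{(\ell)}$ likewise follows from monotonicity and continuity, provides $\varphi_n^\mathbf{x}(u)\to\varphi^\mathbf{x}(u)$ for every $u\in(0,1]$.

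\textbf{Assembly.} Fixing $\mathbf{u}\in\mathbb{I}^{d-\ell}$, the previous step yields $\sum_{j=1}^{d-\ell}\varphi_n^\mathbf{x}(u_j)\to\sum_{j=1}^{d-\ell}\varphi^\mathbf{x}(u_j)\in[0,\infty)$, and a final appeal to the same monotone-convergence principle (applied to the monotone maps $\psi_n^\mathbf{x}$ with continuous limit $\psi^\mathbf{x}$) produces $C_n^\mathbf{x}(\mathbf{u})\to C_\psi^\mathbf{x}(\mathbf{u})$. Boundary cases $u_j\in\{0,1\}$ are covered by the boundary behaviour of $(d-\ell)$-dimensional copulas.

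\textbf{Main obstacle.} The technical heart of the argument is transferring pointwise convergence of the derivatives $\psi_n^{(\ell)}$ to convergence with a moving argument $t_n\to t$, and the analogous statement for their inverses $\psi_n^{(\ell)^{-1}}$; both rely on the classical fact that a pointwise convergent sequence of monotone functions with continuous monotone limit converges locally uniformly. A minor subtlety is that the conditional generators $\psi_n^\mathbf{x}$ need not be normalised in the sense $\psi^\mathbf{x}(1)=1/2$, but this is immaterial as the argument proceeds directly through the explicit Markov-kernel representation without invoking Theorem \ref{th:multi:arch:paper} at the level of conditional copulas.
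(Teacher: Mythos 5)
Your proposal is correct, and its core — establishing pointwise convergence $\psi_n^\mathbf{x}\to\psi^\mathbf{x}$ via the explicit ratio formula and the monotone-limit argument for moving arguments — is the same engine the paper uses, though the paper reaches that formula through the Markov-kernel representation $\psi_n^\mathbf{x}(z)=K_n(\mathbf{x},[0,\psi_n(z)])$ together with Corollary~\ref{cor:implication:interkernel:conv}, whereas you substitute directly into Theorem~\ref{th:multi:arch:paper}(5). The genuine divergence is in the closing step. The paper upgrades pointwise convergence of the conditional generators to uniform convergence and then invokes Theorem~\ref{th:main:res:arch} (i.e.\ the equivalence ``generator convergence $\Leftrightarrow$ copula convergence'') applied to the conditional copulas; this implicitly requires that the equivalence in Theorem~\ref{th:multi:arch:paper} hold for the unnormalized representatives $\psi_n^\mathbf{x}$, which is true but is not flagged. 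You instead push through the composition $C_n^\mathbf{x}(\mathbf{u})=\psi_n^\mathbf{x}\bigl(\sum_j\varphi_n^\mathbf{x}(u_j)\bigr)$ directly, at the cost of the extra step of proving $\varphi_n^\mathbf{x}\to\varphi^\mathbf{x}$ on $(0,1]$ via convergence of the inverses $(\psi_n^{(\ell)})^{-1}$. Your route is slightly more work but is more self-contained, sidesteps the normalization subtlety (as you explicitly note), and does not require citing the equivalence theorem at the level of conditional copulas. Both arguments are sound.
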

	\begin{proof}
		Recall that according to Corollary \ref{cor:implication:interkernel:conv} we already have
		\begin{align*}
			\lim_{n\to\infty} K_n(\mathbf{x},[0,y]) = K_\psi(\mathbf{x},[0,y])
		\end{align*}
		for every $y\in\mathbb{I}$. 
		Considering that $\psi^{(\ell)}$ is continuous on $(0,\infty)$ and applying \cite[Lemma 4.4]{convMultiArch} it directly follows that
		\begin{align*}
			\lim_{n\to\infty}\psi_n^\mathbf{x}(z) = \lim_{n\to\infty} K_n(\mathbf{x},[0,\psi_n(z)]) = K_\psi(\mathbf{x},[0,\psi(z)]) = \psi^\mathbf{x}(z)
		\end{align*}
		for every $z\in[0,\infty)$. As continuous monotone functions the convergence is even uniform (see \cite{multiArchMassKernel}) and applying Theorem \ref{th:main:res:arch} completes the proof.
	\end{proof}
	
	Triggered by \cite{gijbels_partial_copulas} where the authors study a conditional version of Kendall's tau, we propose a conditional version of the multivariate, Markov kernel based dependence measure $\zeta_1 = \zeta_1^d$ introduced in \cite{qmd}. For $(\mathbf{X},Y)\sim C\in\mathcal{C}^d$ this dependence measure is defined by
	\begin{align*}
		\zeta_1^d(C) := 3\int_\mathbb{I}\int_{\mathbb{I}^{d-1}} \left| K_C(\mathbf{s},[0,y]) - y \right| \ \mathrm{d}\mu_{C^{1:d-1}}(\mathbf{s})\mathrm{d}\lambda(y).
	\end{align*}
	Considering a random vector $(\mathbf{X},\mathbf{Y})$, where the subvector $\mathbf{X}$ of covariates is $\ell$-dimensional, and setting 
	\begin{align*}
		\zeta_1^\mathbf{x}(C) = \zeta_1^{d-\ell}(C^\mathbf{x})
	\end{align*}
	yields a conditional multivariate dependence measure quantifying the degree of dependence of the conditional copula $C^\mathbf{x}$ of $C$ when fixing the values of the covariate $\mathbf{X}$. 
	
	Together with Lemma \ref{lem:conv:cond:zeta} in \ref{sec:aux:copulas} we now have the tools to show the remarkable fact that for fixed $\mathbf{x}\in(0,1)^\ell$ uniform convergence of a sequence $(C_n)_{n\in\mathbb{N}}$ of Archimedean copulas converging to $C\in\mathcal{C}^d_\text{ar}$ already implies convergence of the conditional dependence measures $(\zeta_1^\mathbf{x}(C_n))_{n\in\mathbb{N}}$ to $\zeta_1^\mathbf{x}(C)$. 
	
	\begin{Cor}\label{cor:conv:zeta1}
		Suppose that $C,C_1,C_2,\ldots$ are $d$-dimensional Archimedean copulas. If $(C_n)_{n\in\mathbb{N}}$ converges uniformly to $C$ then for every fixed $\mathbf{x}\in(0,1)^\ell$ the sequence of conditional copulas $(C_n^\mathbf{x})_{n\in\mathbb{N}}$ converges $(d-\ell-1)$-weakly conditional to $C^\mathbf{x}$. Furthermore we have
		\begin{align}\label{eq:conditional:zeta:conv}
			\lim_{n\to\infty}|\zeta_1^\mathbf{x}(C_n) - \zeta_1^\mathbf{x}(C)| = 0.
		\end{align}
	\end{Cor}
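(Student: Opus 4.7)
The plan is to chain together results from the preceding sections. The starting point is Theorem \ref{th:conv:conditional:cop}: uniform convergence of $(C_n)_{n\in\mathbb{N}}$ to $C$ implies the pointwise convergence $\lim_{n\to\infty}C_n^\mathbf{x}(\mathbf{u})=C^\mathbf{x}(\mathbf{u})$ for every $\mathbf{u}\in\mathbb{I}^{d-\ell}$ and every fixed $\mathbf{x}\in(0,1)^\ell$. Since, by the preceding Proposition, each $C_n^\mathbf{x}$ and $C^\mathbf{x}$ belong to $\mathcal{C}^{d-\ell}_\text{ar}$, and since pointwise convergence of copulas coincides with uniform convergence, the sequence $(C_n^\mathbf{x})_{n\in\mathbb{N}}$ in fact converges uniformly to $C^\mathbf{x}$ within $\mathcal{C}^{d-\ell}_\text{ar}$.

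Next, I invoke Corollary \ref{cor:implication:interkernel:conv} for the Archimedean sequence $(C_n^\mathbf{x})_{n\in\mathbb{N}}$ in dimension $d-\ell$, with conditioning dimension $d-\ell-1$. This produces a set $\Lambda\in\mathcal{B}(\mathbb{I}^{d-\ell-1})$ of full $\mu_{(C^\mathbf{x})^{1:d-\ell-1}}$-measure such that for every $\mathbf{s}\in\Lambda$ the univariate distribution function $y\mapsto K_{C_n^\mathbf{x}}(\mathbf{s},[0,y])$ converges to $y\mapsto K_{C^\mathbf{x}}(\mathbf{s},[0,y])$ on a dense subset of $\mathbb{I}$. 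The limit is continuous, since the first marginal of the Archimedean copula $C^\mathbf{x}$ is absolutely continuous, so this convergence on a dense set extends to all of $\mathbb{I}$ and thus to weak convergence of the associated probability measures on $\mathcal{B}(\mathbb{I})$. This is precisely $(d-\ell-1)$-weak conditional convergence of $(C_n^\mathbf{x})_{n\in\mathbb{N}}$ to $C^\mathbf{x}$, establishing the first assertion.

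For the convergence \eqref{eq:conditional:zeta:conv}, I apply Lemma \ref{lem:conv:cond:zeta}, which promotes $(d'-1)$-weak conditional convergence of Archimedean copulas in dimension $d'$ to convergence of the dependence measure $\zeta_1^{d'}$. Taking $d'=d-\ell$ together with the first assertion gives $\zeta_1^{d-\ell}(C_n^\mathbf{x})\to\zeta_1^{d-\ell}(C^\mathbf{x})$, which is exactly \eqref{eq:conditional:zeta:conv} by the definition of $\zeta_1^\mathbf{x}$. No step is expected to pose a serious obstacle: each reduces to a direct invocation of an earlier result, the only care being to verify that the hypotheses apply in the reduced dimension $d-\ell$, which is ensured by the truncation invariance property established for conditional Archimedean copulas.
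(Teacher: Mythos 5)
Your overall architecture mirrors the paper's: chain Theorem~\ref{th:conv:conditional:cop} (pointwise convergence of conditional copulas), the Archimedean convergence machinery, and Lemma~\ref{lem:conv:cond:zeta}. The paper's own proof is a one-line appeal to Theorems~\ref{th:main:res:arch}, \ref{th:conv:conditional:cop} and Lemma~\ref{lem:conv:cond:zeta}, so in spirit you and the paper do the same thing.

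However, there is a genuine gap in your final step. You describe Lemma~\ref{lem:conv:cond:zeta} as ``promoting $(d'-1)$-weak conditional convergence \dots to convergence of the dependence measure $\zeta_1^{d'}$'', but the lemma has a \emph{second} hypothesis: $(\mu_{C_n^{1:d'-1}})_{n\in\mathbb{N}}$ must converge in total variation to $\mu_{C^{1:d'-1}}$. You never verify this for the family $(C_n^{\mathbf{x}})_{n\in\mathbb{N}}$. The hypothesis does hold here, but it needs an argument: since you have already shown that $(C_n^{\mathbf{x}})_{n\in\mathbb{N}}$ converges uniformly to $C^{\mathbf{x}}$ \emph{within} $\mathcal{C}^{d-\ell}_{\text{ar}}$, Theorem~\ref{th:multi:arch:paper}~(6) (applied in dimension $d-\ell$) gives a.e.\ convergence of the marginal densities $(c_n^{\mathbf{x}})^{1:d-\ell-1}\to (c^{\mathbf{x}})^{1:d-\ell-1}$, and then Scheff\'e's lemma (equivalently, the remark preceding Lemma~\ref{lem:conv:cond:zeta}) upgrades this to total variation convergence; when $d-\ell=2$ the $1$-marginal is uniform so the requirement is trivially met. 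Without this check, your invocation of Lemma~\ref{lem:conv:cond:zeta} is not licensed as written.

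A smaller imprecision: you justify passing from convergence on a dense subset $U^{\mathbf{x}}$ to weak convergence by appealing to continuity of ``the first marginal of $C^{\mathbf{x}}$''. What actually matters is the conditional CDF $y\mapsto K_{C^{\mathbf{x}}}(\mathbf{s},[0,y])$, and for a $(d'-1)$-Markov kernel this involves $D^-\psi^{(d'-2)}$, which may have countably many jumps. The passage from dense convergence to weak convergence is still valid because a monotone sequence of CDFs converging on a dense set converges at every continuity point of the limit, which is precisely what weak convergence requires; but the reason you give is not the right one.
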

	\begin{proof}
		Combining Theorems \ref{th:main:res:arch}, \ref{th:conv:conditional:cop} as well as Lemma \ref{lem:conv:cond:zeta} yields both, $(d-\ell-1)$-weak conditional convergence of the conditional copulas and convergence of the conditional dependence measures.
	\end{proof}
	
	The practical relevance of our study is twofold and entailed by Theorem \ref{th:conv:conditional:cop} and Corollary \ref{cor:conv:zeta1}. First, in strong contrast to the estimation of general conditional copulas (see, e.g., \cite{gijbels_estimationConditionalCop, gijbels_conditionalCop}) in case of an Archimedean copula $C$ it suffices to estimate (the generator of) $C$ in order to obtain an estimator of (the generator of) its conditional copula $C^\mathbf{x}$. Additionally, an estimator of (the generator of) $C$ also gives rise to an estimation of the conditional dependence measure $\zeta_1^\mathbf{x}(C)$ which might be a powerful alternative to conditional association measures (as studied, e.g., in \cite{gijbels_partial_copulas,gijbels_conditionalCop}). \\
	Regarding the estimation of multivariate Archimedean copulas we refer to \cite{estimationArch_hering, estimationArch_knownMarginals}, and the references therein, for parametric methodologies and to \cite{GNZ} for a non-parametric procedure. It is worth remarking, however, that the approach from \cite{GNZ} is only proven to be valid in dimensions $2,3$ and $4$ whereas, despite strong evidence, the general case remains an open problem.
	
	The following example provides a small simulation in the $3$-dimensional case illustrating the estimation of $\zeta_1^x$ from data of an Archimedean parametric family for several choices of conditioning and different sample sizes.
	\begin{exa}\label{ex:est:zeta1x}
		Consider the $3$-dimensional Gumbel copula $C_\vartheta$ with generator $\psi(z) = \exp(-z^{1/\vartheta})$ and parameter $\vartheta = 5$. Working with the R-package \textquoteleft copula' (\cite{copulaPackage}) we generate samples of $C_5$ of size $n\in\{50,100,500,1000\}$ based on which we estimate its parameter to obtain an estimated generator $\hat{\psi}_n$ of $\psi$. Fixing covariate values $x\in\{0.05, 0.15, \ldots,0.95\}$ we directly compute $\hat{\psi}^x_n$ and consequently $\zeta_1^x(\hat{C}_n) = \zeta_1(\hat{C}_n^x)$. We repeated this process $R=300$ times and compared the real values of $\zeta_1^x(C_5)$ to the average values of $\zeta_1^x(\hat{C}_n)$ over all runs. The results are plotted in Figure \ref{fig:ex:est:zeta1x}. One can observe that for increasing $x$ the dependence increases and that the estimation works very well, with small deviations for higher values of $x$ which get better for higher sample sizes. 
		\begin{figure}[htp!]
			\centering 	
			\includegraphics[width=\textwidth]{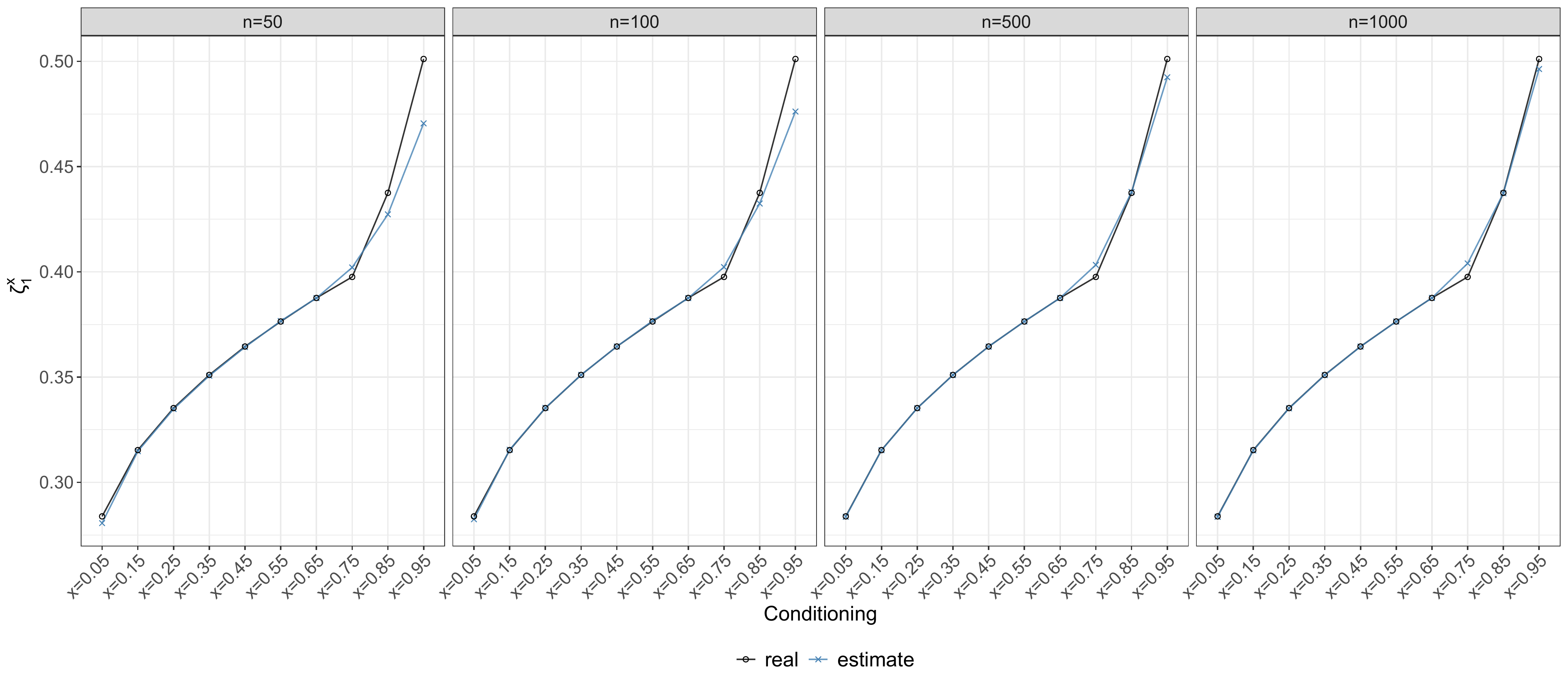}
			\caption{Values of $\zeta_1^x(C)$ and averages of $\zeta_1^x(\hat{C}_n)$ over $R=30=$ runs for $x\in\{0.05,0.15,\ldots,0.85\}$ and sample sizes $n=\{50,100,500,1000\}$ as considered in Example \ref{ex:est:zeta1x}.}
			\label{fig:ex:est:zeta1x}
		\end{figure} 
		The quality of the estimation stems from our choice of simulating data from a parametric family and using parametric estimation. The more general non-parametric estimation is more difficult even in dimension $3$ and left for future work on the topic.
	\end{exa}

	We conculde the paper by showing that, in addition to convergence, singularity as well as conditional increasingness of an Archimedean copula carry over to its conditional copulas, too.

	\begin{theorem}\label{th:singularity:conditional:copula}
		Suppose that $C$ is a $d$-dimensional Archimedean copula with strict generator $\psi$. If $C$ is singular w.r.t. $\lambda_d$ then for every  $\ell\in\{1,\ldots,d-2\}$ there exists a set $\Lambda\in\mathcal{B}(\mathbb{I}^\ell)$ with $\lambda_\ell(\Lambda) = 1$ such that for all $\mathbf{x}\in\Lambda$ the conditional copula $C^\mathbf{x}$ is singular w.r.t. $\lambda_{d-\ell}$.
	\end{theorem}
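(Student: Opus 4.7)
The plan is to combine Theorem~\ref{th:singular:kernels} applied to $C$ at level $\ell$ with the Markov-kernel representation of $C^{\mathbf{x}}$ from Section~\ref{sec:conditional:copulas}, transferring singularity across the resulting coordinatewise change of variable. Theorem~\ref{th:singular:kernels} (whose hypothesis holds because every marginal of a $d$-dimensional Archimedean copula is absolutely continuous) produces a set $\Lambda_0\in\mathcal{B}(\mathbb{I}^\ell)$ with $\mu_{C^{1:\ell}}(\Lambda_0)=1$ on which $K_C(\mathbf{x},\cdot)\perp\lambda_{d-\ell}$. Because $C$ is strict, formula~\eqref{eq:arch:density} applied to $C^{1:\ell}$ gives $c^{1:\ell}(\mathbf{x})>0$ on $(0,1)^\ell$, and since every copula has uniform univariate marginals, $\mu_{C^{1:\ell}}$ assigns no mass to $\mathbb{I}^\ell\setminus(0,1)^\ell$. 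Hence $\mu_{C^{1:\ell}}$ and $\lambda_\ell$ share the same null sets, so setting $\Lambda:=\Lambda_0\cap(0,1)^\ell$ yields $\lambda_\ell(\Lambda)=1$ together with singularity of $K_C(\mathbf{x},\cdot)$ for every $\mathbf{x}\in\Lambda$.

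Now fix $\mathbf{x}\in\Lambda$. The representation $C^{\mathbf{x}}(\mathbf{u})=K_C(\mathbf{x},[0,g_{\mathbf{x}}^{-1}(u_1)]\times\cdots\times[0,g_{\mathbf{x}}^{-1}(u_{d-\ell})])$ derived in Section~\ref{sec:conditional:copulas} amounts to the identity $\mu_{C^{\mathbf{x}}}=T_{*}K_C(\mathbf{x},\cdot)$, where $T(\mathbf{y}):=(g_{\mathbf{x}}(y_1),\ldots,g_{\mathbf{x}}(y_{d-\ell}))$ is the product homeomorphism of $\mathbb{I}^{d-\ell}$ induced by the continuous strictly increasing bijection $g_{\mathbf{x}}:[0,1]\to[0,1]$. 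Picking $E\in\mathcal{B}(\mathbb{I}^{d-\ell})$ with $K_C(\mathbf{x},E)=1$ and $\lambda_{d-\ell}(E)=0$, the image $T(E)$ is Borel and satisfies $\mu_{C^{\mathbf{x}}}(T(E))=1$, so the singularity of $C^{\mathbf{x}}$ reduces to the claim $\lambda_{d-\ell}(T(E))=0$. By iterated application of the one-dimensional change-of-variable formula (equivalently, coordinatewise Fubini together with the one-dimensional Luzin (N) property), this in turn follows once $g_{\mathbf{x}}$ is known to be absolutely continuous on $[0,1]$.

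This absolute continuity is the sole technical ingredient and is handed to us by the Archimedean structure: the $1$-dimensional marginal of $K_C(\mathbf{x},\cdot)$ admits the density $y\mapsto c^{1:\ell+1}(\mathbf{x},y)/c^{1:\ell}(\mathbf{x})$ with respect to $\lambda_1$, which is finite on $(0,1)$ because the Archimedean density formula~\eqref{eq:arch:density} applied to $C^{1:\ell+1}$ and $C^{1:\ell}$ produces continuous non-negative factors (with $D^{-}\psi^{(d-2)}$ replacing $\psi^{(d-1)}$ in the boundary case $\ell=d-2$, where it is the left derivative of a locally bounded convex function). Being the distribution function of this absolutely continuous marginal, $g_{\mathbf{x}}$ is itself absolutely continuous on $[0,1]$, has Luzin (N), and thus pushes the $\lambda_{d-\ell}$-null set $E$ to a $\lambda_{d-\ell}$-null set $T(E)$. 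Plugging back gives $\mu_{C^{\mathbf{x}}}\perp\lambda_{d-\ell}$ for every $\mathbf{x}\in\Lambda$, as required. The main obstacle is conceptual — recognising that singularity is preserved under the coordinatewise pushforward $T_{*}$ — rather than computational, and strictness of $\psi$ is precisely what supplies the conditional marginal densities that drive the argument.
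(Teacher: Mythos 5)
Your proposal is correct, but it takes a genuinely different route to the paper's argument; both start from Theorem~\ref{th:singular:kernels}, but diverge in how they transfer singularity from $K_C(\mathbf{x},\cdot)$ to $C^{\mathbf{x}}$. The paper differentiates: it computes the $(d-\ell)$-th mixed partial derivative of $C^{\mathbf{x}}$, factors out the corresponding derivative of $\mathbf{u}\mapsto K_C(\mathbf{x},[\mathbf{0},\mathbf{u}])$, and then invokes the criterion from \cite[Section~3.2]{Principles} that a copula whose density (i.e. the a.e.-existing mixed partial) vanishes $\lambda$-a.e. is singular. You instead keep things measure-theoretic: you identify $\mu_{C^{\mathbf{x}}}$ as the pushforward of $K_C(\mathbf{x},\cdot)$ under the coordinatewise homeomorphism $T=g_{\mathbf{x}}^{\otimes(d-\ell)}$, observe that $g_{\mathbf{x}}=K_{C^{1:\ell+1}}(\mathbf{x},[0,\cdot])$ is absolutely continuous because the $(\ell+1)$-marginal of an Archimedean copula is absolutely continuous, and use Luzin~(N) together with the one-dimensional change-of-variable formula and Fubini to conclude that $T$ maps $\lambda_{d-\ell}$-null sets to $\lambda_{d-\ell}$-null sets. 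Your approach has the small advantage of not having to manipulate $\psi^{(d)}$ (which for a merely $d$-monotone $\psi$ only exists as the a.e.\ derivative of $D^-\psi^{(d-2)}$) and of making the ``singularity is a pushforward-invariant under null-preserving maps'' mechanism explicit; the paper's is more computational but exhibits the conditional density directly. The passage from $\mu_{C^{1:\ell}}$-null to $\lambda_\ell$-null via strict positivity of $c^{1:\ell}$ on $(0,1)^\ell$ is handled the same way (the paper does it implicitly when it quotes Theorem~\ref{th:singular:kernels} with $\lambda_\ell(\Lambda)=1$). Both proofs are valid.
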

	\begin{proof}
		According to \cite[Section 3.2]{Principles} it suffices to show that the $(d-\ell)$-th derivative vanishes $\lambda_\ell$-almost everywhere.
		Straightforward calculations yield
		\begin{align*}
			&\frac{\partial^{d-\ell}}{\partial u_1 \cdots \partial u_{d-\ell}} C^\mathbf{x}(\mathbf{u}) = \tfrac{\psi^{(d)}\left( \sum_{i=1}^\ell \varphi(x_i) + \sum_{j=1}^{d-\ell} \varphi(g_\mathbf{x}^{-1}(u_j)) \right)}{\psi^{(\ell)}\left( \sum_{i=1}^\ell\varphi(x_i) \right)}  \prod_{j=1}^{d-\ell} \varphi'(g_\mathbf{x}^{-1}(u_j))  (g_\mathbf{x}^{-1})'(u_j) \\
			&= \underbrace{\tfrac{\psi^{(d)}\left( \sum_{i=1}^\ell\varphi(x_i) + \sum_{j=1}^{d-\ell}\varphi(u_j) \right)}{\psi^{(\ell)}\left( \sum_{i=1}^\ell\varphi(x_i) \right)}}_{= \frac{\partial^{d-\ell}}{\partial u_1\cdots \partial u_{d-\ell}}K_C(\mathbf{x},[\mathbf{0},\mathbf{u}])\prod_{j=1}^{d-\ell}\frac{1}{\varphi'(u_j)}}
			\cdot \tfrac{\psi^{(d)}\left( \sum_{i=1}^\ell \varphi(x_i) + \sum_{j=1}^{d-\ell} \varphi(g_\mathbf{x}^{-1}(u_j)) \right)}{\psi^{(d)}\left( \sum_{i=1}^\ell\varphi(x_i) + \sum_{j=1}^{d-\ell}\varphi(u_j) \right)} \prod_{j=1}^{d-\ell} \varphi'(g_\mathbf{x}^{-1}(u_j))  (g_\mathbf{x}^{-1})'(u_j).
		\end{align*}
		Since $C$ is singular, however, Theorem \ref{th:singular:kernels} yields a set $\Lambda\in\mathcal{B}(\mathbb{I}^\ell)$ with $\lambda_\ell(\Lambda) = 1$ such that for all $\mathbf{x}\in\Lambda$ we have that $K_C(\mathbf{x},\cdot)$ is singular w.r.t. $\lambda_{d-\ell}$. As direct consequence, $\frac{\partial^{d-\ell}}{\partial u_1\cdots \partial u_{d-\ell}}K_C(\mathbf{x},[\mathbf{0},\mathbf{u}]) = 0$ for all $\mathbf{x}\in\Lambda$ so the above expression evaluates to $0$ as well and singularity of $C^\mathbf{x}$ w.r.t. $\lambda_{d-\ell}$ follows.
	\end{proof}

	\vspace*{0.3cm}
	
	Following \cite{mueller_scarsini}, $C$ is conditionally increasing if the random vector $\mathbf{X}\sim C$ and $X_i$ is stochastically increasing in $(X_p, p\in J)$ for all $i\not\in J$ and $J\subset\{1,\ldots, d\}$. 
	Translating to the Markov kernel setup, $C$ is conditionally increasing if for every $\ell\in\{1,\ldots, d-1\}$ we have that $\mathbf{x}\mapsto K_C(\mathbf{x},[0,y])$ is decreasing in $\mathbf{x}$ for all $y\in\mathbb{I}$ (w.l.o.g., we take the first $\ell$ coordinates).

	\begin{Prop}
		Suppose that $C$ is a $d$-dimensional Archimedean copula with generator $\psi$ and let $\ell\in\{1,\ldots, d-2\}$. If $C$ is conditionally increasing, then so is $C^\mathbf{x}$ for any $\mathbf{x}\in(0,1)^\ell$.
		\begin{proof}
			According to \cite[Theorem 2.8]{mueller_scarsini} we have to prove that $(-1)^{d-\ell-1}(\psi^\mathbf{x})^{(d-\ell-1)}$ is log-convex and writing
			\begin{align*}
				(-1)^{d-\ell-1}(\psi^\mathbf{x})^{(d-\ell-1)}(z) &= \frac{(-1)^{d-1}D^-\psi^{(d-2)}\left( \sum_{i=1}^\ell\varphi(x_i) + z \right)}{(-1)^{\ell}\psi^{(\ell)}\left( \sum_{i=1}^\ell\varphi(x_i) \right)}
			\end{align*}
			we directly observe that log-convexity follows immediately from log-convexity of $(-1)^{d-1} D^-\psi^{(d-2)}$.
		\end{proof}
	\end{Prop}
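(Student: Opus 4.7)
The plan is to reduce the statement to the characterization of conditional increasingness for Archimedean copulas due to \cite[Theorem~2.8]{mueller_scarsini}: an Archimedean copula of dimension $k$ generated by some $\widetilde\psi$ is conditionally increasing if, and only if, $(-1)^{k-1}D^-\widetilde\psi^{(k-2)}$ is log-convex on $(0,\infty)$. Since it has been established earlier in this section that $C^\mathbf{x}$ is Archimedean of dimension $d-\ell$ with (strict) generator $\psi^\mathbf{x}(z)=K_C(\mathbf{x},[0,\psi(z)])$, it suffices to verify this log-convexity condition for $\psi^\mathbf{x}$ at the reduced dimension $d-\ell$.

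Second, I would make the functional form of $\psi^\mathbf{x}$ fully explicit. Combining the identity above with the Markov-kernel formula from Theorem~\ref{th:interkernel:arch} and the strict-generator identity $\varphi(\psi(z))=z$ gives
\[
\psi^\mathbf{x}(z) \;=\; \frac{\psi^{(\ell)}\bigl(\sum_{i=1}^\ell \varphi(x_i) + z\bigr)}{\psi^{(\ell)}\bigl(\sum_{i=1}^\ell\varphi(x_i)\bigr)}.
\]
Differentiating $d-\ell-2$ times in $z$, taking the left derivative, and multiplying numerator and denominator by $(-1)^{d-1}$ and $(-1)^{\ell}$ respectively to produce non-negative quantities yields
\[
(-1)^{d-\ell-1}\,D^-(\psi^\mathbf{x})^{(d-\ell-2)}(z) \;=\; \frac{(-1)^{d-1}\,D^-\psi^{(d-2)}\bigl(\sum_{i=1}^\ell\varphi(x_i)+z\bigr)}{(-1)^{\ell}\,\psi^{(\ell)}\bigl(\sum_{i=1}^\ell\varphi(x_i)\bigr)}.
\]

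The last step is the observation that the denominator is a strictly positive constant in $z$, while the numerator is a translation of $z\mapsto(-1)^{d-1}D^-\psi^{(d-2)}(z)$, which is log-convex on $(0,\infty)$ precisely because $C$ itself is conditionally increasing (again by \cite[Theorem~2.8]{mueller_scarsini}). Since log-convexity is preserved both under horizontal translation by a non-negative constant and under division by a positive constant, the right-hand side is log-convex in $z$, and the Müller--Scarsini criterion applied to $\psi^\mathbf{x}$ at dimension $d-\ell$ yields conditional increasingness of $C^\mathbf{x}$. I do not foresee a genuine obstacle here; the only points demanding care are matching the correct order of differentiation and sign convention so that the function one asserts to be log-convex is indeed non-negative, and remembering to apply the Müller--Scarsini characterization at the reduced dimension $d-\ell$ rather than at $d$.
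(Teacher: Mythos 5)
Your proof is correct and follows essentially the same route as the paper: both invoke the Müller--Scarsini criterion at the reduced dimension $d-\ell$, compute the explicit form of the appropriate derivative of $\psi^\mathbf{x}$ as a translate of $(-1)^{d-1}D^-\psi^{(d-2)}$ divided by the positive constant $(-1)^\ell\psi^{(\ell)}\bigl(\sum_{i=1}^\ell\varphi(x_i)\bigr)$, and observe that log-convexity is preserved under translation and positive scaling. If anything, your write-up is slightly more careful than the paper's in using the one-sided derivative $D^-(\psi^\mathbf{x})^{(d-\ell-2)}$ where the paper loosely writes $(\psi^\mathbf{x})^{(d-\ell-1)}$.
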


	\section*{Acknowledgments}
	The author gratefully acknowledges the financial support from Porsche Holding Austria and 
	Land Salzburg within the WISS 2025 project \textquoteleft KFZ' (P1900123).
	
	\appendix
	
	\section{Multivariate conditional distributions}
	\label{sec:kernel:representation}
	This section derives several results concerning the connection of derivatives and Markov kernels of general multivariate distribution functions with absolutely continuous marginals which are applied to our setting of Archimedean copulas in the main text. 
	Suppose $H$ is a $d$-dimensional distribution function and let $\ell\in\{1,\ldots,d-1\}$. Considering the disintegration theorem, (a version of) the $\ell$-Markov kernel $K_H$ is given by the Radon-Nikodym derivative of $\mu_H$ w.r.t. the marginal measure $\mu_{H^{1:\ell}}$, i.e., $K_H = \frac{\mathrm{d}\mu_H}{\mathrm{d}\mu_{H^{1:\ell}}}$,
	whence deriving it is generally highly non-trivial.

	For some special cases, however, it is well-known that more can be said. For instance, if $H$ is absolutely continuous then there exists a density $h^{1:d}$ of $H$ and disintegration directly yields
	\begin{align*}
		H(\mathbf{x},\mathbf{y}) = \int_{(\mathbf{-\infty},\mathbf{x}]} K_H(\mathbf{s}, (\mathbf{-\infty},\mathbf{y}]) \ \mathrm{d}\mu_{H^{1:\ell}}(\mathbf{s}) = \int_{(\mathbf{-\infty},\mathbf{x}]} \int_{(\mathbf{-\infty},\mathbf{y}]} h^{1:d}(\mathbf{s},\mathbf{t}) \ \mathrm{d}\lambda_{d-\ell}(\mathbf{t})\mathrm{d}\lambda_\ell(\mathbf{s}).
	\end{align*}
	Considering that $\mu_{H^{1:\ell}}$ is also absolutely continuous, there exists $\Lambda\in\mathcal{B}(\mathbb{R}^\ell)$ with $\lambda_\ell(\Lambda) = 1$ such that for all $\mathbf{x}\in\Lambda$ we have
	\begin{align*}
		K_H(\mathbf{x},(\mathbf{-\infty}, \mathbf{y}]) = \int_{(\mathbf{-\infty},\mathbf{y}]} \frac{h^{1:d}(\mathbf{x},\mathbf{t})}{h^{1:\ell}(\mathbf{x})} \ \mathrm{d}\lambda_{d-\ell}(\mathbf{t}) = \int_{(\mathbf{-\infty},\mathbf{y}]} h^{1:d|1:\ell}(\mathbf{t}|\mathbf{x}) \ \mathrm{d}\lambda_{d-\ell}(\mathbf{t}),
	\end{align*}
	where $h^{1:d|1:\ell}(\cdot|\mathbf{x})$ is the conditional density of $H$ given $\mathbf{x}$ (cf. \cite[Section 8.3]{Klenke} for the two dimensional case).
	
	Furthermore, in dimension two it is well-known that for every $y\in\mathbb{R}$ there exists a set $\Gamma^y$ with $\lambda(\Gamma^y) = 1$ such that for every $x\in\Gamma^y$ it holds that
	\begin{align*}
		K_H(x,(-\infty,y]) = \frac{\partial H(x,y)}{\partial x}
	\end{align*}
	regardless of $H$ being purely absolutely continuous, singular or mixed (see, e.g. \cite[Theorem 3.4.4]{Principles} for a proof in the copula setting). The next theorem extends this assertion to multivariate disitribution functions with absolutely continuous marginals:
	
	\begin{theorem}\label{th:interkernel}
		Suppose that $H$ is a $d$-dimensional distribution function with absolutely continuous $\ell$-marginal for some $\ell\in\{2,\ldots,d-1\}$. Then for every $\mathbf{y}\in\mathbb{R}^{d-\ell}$ there exists $\Gamma^\mathbf{y}\in\mathcal{B}(\mathbb{R}^\ell)$ with $\lambda_\ell(\Gamma^\mathbf{y}) = 1$ such that
		\begin{align}
			K_H(\mathbf{x},(\mathbf{-\infty}, \mathbf{y}]) = \frac{\partial^\ell H(\mathbf{x},\mathbf{y})}{\partial x_1 \partial x_2 \cdots \partial x_\ell}  \cdot h^{1:\ell}(\mathbf{x})^{-1}
		\end{align}
		holds for all $\mathbf{x}\in\Gamma^\mathbf{y}$.
	\end{theorem}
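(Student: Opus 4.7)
My plan is to reinterpret the map $\mathbf{x}\mapsto H(\mathbf{x},\mathbf{y})$, for fixed $\mathbf{y}$, as an $\ell$-dimensional indefinite Lebesgue integral, and then apply a multivariate version of the fundamental theorem of calculus. Fix $\mathbf{y}\in\mathbb{R}^{d-\ell}$ and define $F_\mathbf{y}(\mathbf{x}):=H(\mathbf{x},\mathbf{y})$ together with the non-negative measurable integrand $g_\mathbf{y}(\mathbf{s}):=K_H(\mathbf{s},(\mathbf{-\infty},\mathbf{y}])\cdot h^{1:\ell}(\mathbf{s})$. The disintegration theorem, applied to the box $(\mathbf{-\infty},\mathbf{x}]\times(\mathbf{-\infty},\mathbf{y}]$, combined with the assumed absolute continuity of $H^{1:\ell}$ (so that $\mathrm{d}\mu_{H^{1:\ell}}=h^{1:\ell}\,\mathrm{d}\lambda_\ell$), gives
$$F_\mathbf{y}(\mathbf{x}) \;=\; \int_{(\mathbf{-\infty},\mathbf{x}]} K_H(\mathbf{s},(\mathbf{-\infty},\mathbf{y}])\,\mathrm{d}\mu_{H^{1:\ell}}(\mathbf{s}) \;=\; \int_{(\mathbf{-\infty},\mathbf{x}]} g_\mathbf{y}(\mathbf{s})\,\mathrm{d}\lambda_\ell(\mathbf{s}),$$
so that $F_\mathbf{y}$ is literally the $\ell$-dimensional indefinite integral of the $L^1(\lambda_\ell)$ function $g_\mathbf{y}$ (its total mass is bounded by $1$).

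The second step is the multivariate fundamental theorem of calculus: for any $g\in L^1(\lambda_\ell)$, the mixed partial derivative of the indefinite integral $\mathbf{x}\mapsto \int_{(\mathbf{-\infty},\mathbf{x}]}g\,\mathrm{d}\lambda_\ell$ exists and equals $g(\mathbf{x})$ for $\lambda_\ell$-almost every $\mathbf{x}$. I would prove this by induction on $\ell$: using Fubini's theorem one writes
$$F_\mathbf{y}(\mathbf{x}) \;=\; \int_{-\infty}^{x_1}\!\Biggl(\int_{(\mathbf{-\infty},\mathbf{x}_{2:\ell}]} g_\mathbf{y}(s_1,\mathbf{s}_{2:\ell})\,\mathrm{d}\lambda_{\ell-1}(\mathbf{s}_{2:\ell})\Biggr)\mathrm{d}\lambda(s_1),$$
differentiates in $x_1$ using the classical one-dimensional Lebesgue differentiation theorem (for $\lambda_1$-a.e.\ $s_1$, once the remaining variables are temporarily fixed on a Fubini-good set), and then applies the inductive hypothesis in $\mathbf{x}_{2:\ell}$ to the inner integral. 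A careful Fubini bookkeeping of exceptional sets collapses all of them into a single $\lambda_\ell$-null set independent of the order of differentiation; this is the multivariate analogue of \cite[Theorem 3.4.4]{Principles} already invoked in dimension two.

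Applying this to our particular $F_\mathbf{y}$ and $g_\mathbf{y}$ produces a set $\Gamma_0^\mathbf{y}\in\mathcal{B}(\mathbb{R}^\ell)$ of full $\lambda_\ell$-measure on which
$$\frac{\partial^\ell H(\mathbf{x},\mathbf{y})}{\partial x_1\cdots\partial x_\ell} \;=\; K_H(\mathbf{x},(\mathbf{-\infty},\mathbf{y}])\cdot h^{1:\ell}(\mathbf{x}).$$
Intersecting $\Gamma_0^\mathbf{y}$ with $\{h^{1:\ell}>0\}$, on whose complement $K_H(\mathbf{x},\cdot)$ is only defined up to $\mu_{H^{1:\ell}}$-null modifications anyway, and dividing through by $h^{1:\ell}(\mathbf{x})$ yields the claimed formula on the resulting set $\Gamma^\mathbf{y}$. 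The main technical obstacle throughout is precisely the FTC step: the Lebesgue differentiation theorem in $\mathbb{R}^\ell$ delivers the symmetric derivative (limits over shrinking cubes), and one must upgrade this to the existence and a.e.\ value of the iterated/mixed partial derivative $\partial^\ell/(\partial x_1\cdots\partial x_\ell)$, which is what forces the inductive-plus-Fubini argument above.
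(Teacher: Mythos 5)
Your reduction via disintegration and absolute continuity of the $\ell$-marginal to the assertion that $F_\mathbf{y}(\mathbf{x}) = \int_{(\mathbf{-\infty},\mathbf{x}]} g_\mathbf{y}\,\mathrm{d}\lambda_\ell$ with $g_\mathbf{y} = K_H(\cdot,(\mathbf{-\infty},\mathbf{y}])\,h^{1:\ell}\in L^1(\lambda_\ell)$ is exactly the right first step and matches the strategy of the supplementary proof in \cite{qmd} that the paper cites. The genuine gap is in your multivariate FTC step. After differentiating once in $x_1$ you only know $\partial_{x_1}F_\mathbf{y}=G$ on a set of full $\lambda_\ell$-measure, where the exceptional $x_1$-set depends on $\mathbf{x}_{2:\ell}$; a.e.\ equality of two functions does not pass to a.e.\ equality of their subsequent pointwise partial derivatives, since differentiation is not a well-defined operation on a.e.-equivalence classes. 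Fubini only tells you that for a.e.\ $\mathbf{x}_{2:\ell}$ the bad $x_1$-slice is $\lambda_1$-null; it does not give a product-structured exceptional set, which is what you would need to legitimately feed $\partial_{x_1}F_\mathbf{y}$ (rather than its a.e.-equal replacement $G$) into the inductive hypothesis. So "a careful Fubini bookkeeping collapses all exceptional sets" is precisely the claim that needs a proof and does not follow from what you wrote.

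The clean way to close the gap is to avoid iterating altogether: read $\partial^\ell/\partial x_1\cdots\partial x_\ell$ as the limit of the mixed difference quotient with a uniform increment, exactly as the paper itself handles this derivative in the proof of Lemma~\ref{lem:partial:increasing}, and observe that
\begin{equation*}
\frac{1}{h^\ell}\,\Delta^\ell_{x_\ell,x_\ell+h}\cdots\Delta^1_{x_1,x_1+h}F_\mathbf{y}(\mathbf{x}) \;=\; \frac{1}{h^\ell}\int_{(\mathbf{x},\mathbf{x}+h\mathbf{1}]} g_\mathbf{y}\,\mathrm{d}\lambda_\ell,
\end{equation*}
an average of $g_\mathbf{y}$ over the one-sided cube $(\mathbf{x},\mathbf{x}+h\mathbf{1}]$. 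Since these cubes shrink regularly to $\mathbf{x}$, the Lebesgue differentiation theorem yields convergence to $g_\mathbf{y}(\mathbf{x})$ at every Lebesgue point of $g_\mathbf{y}$, i.e.\ $\lambda_\ell$-a.e., in one stroke and with a single exceptional set. (Two smaller caveats: measurability of your "bad set" would itself require an argument, which the Lebesgue-point route avoids; and $\{h^{1:\ell}>0\}$ need not have full $\lambda_\ell$-measure, so the final division must be understood there — an issue already latent in the theorem's statement and harmless in the Archimedean application where $c^{1:\ell}>0$ off the zero set.)
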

	The proof of this theorem is essentially carried out in \cite[Supplementary]{qmd} where the authors show that for a $d$-dimensional copula $C$ with only independent $(d-1)$-marginals, i.e. $C^{1:d-1} = \Pi_{d-1}$, it holds that for every $y\in\mathbb{I}$ we have
	\begin{align*}
		K_C(x_1,x_2,\ldots x_{d-1}, [0,y]) =\frac{\partial^{d-1} C(\mathbf{x},y)}{\partial x_1 \partial x_2 \cdots \partial x_{d-1}}
	\end{align*}
	for $\lambda_{d-1}$-almost every $\mathbf{x}\in\mathbb{I}^{d-1}$. The proof can be readily adapted to our setting.

	Following \cite{Principles} a $d$-dimensional distribution function $G$ is $d$-increasing if for $\mathbf{a}, \mathbf{b}\in\mathbb{R}^d$ such that $b_i \geq a_i$ for $i=1,2,\ldots,d$ it holds that
	\begin{align*}
		V_{G}((\mathbf{a},\mathbf{b}]) := \sum_{\mathbf{v}\in\text{ver}((\mathbf{a},\mathbf{b}])}\text{sign}(\mathbf{v}) G(\mathbf{v}) \geq 0,
	\end{align*}
	where sign$(\mathbf{v}) = 1$ if $v_j=a_j$ for an even number of indices and sign$(\mathbf{v}) =-1$ otherwise and ver$((\mathbf{a},\mathbf{b}]) = \{a_1,b_1\} \times \ldots \times \{a_{d}, b_{d}\}$. Using the finite difference operator $\Delta_{a_i,b_i}^j$ given by
	\begin{align*}
		\Delta_{a_i,b_i}^j G(\mathbf{s}) := G(s_1,\ldots,s_{j-1},b_j,s_{j+1},\ldots, s_d) - G(s_1,\ldots,s_{j-1},a_j,s_{j+1},\ldots, s_d)
	\end{align*}
	allows to alternatively compute the $G$-volume $V_G$ of $(\mathbf{a},\mathbf{b}]$ via
	\begin{align*}
		V_{G}((\mathbf{a},\mathbf{b}]) = \Delta^d_{a_d,b_d}\Delta^{d-1}_{a_{d-1},b_{d-1}} \cdots \Delta^1_{a_i,b_1} G(\mathbf{s}).
	\end{align*}
	This expression will be useful in the next lemma saying that if it exists, the $\ell$-times iterated partial derivatives of a $d$-dimensional distribution function is $(d-\ell)$-increasing.
	
	\begin{Lemma}\label{lem:partial:increasing}
		Suppose that $H$ is a $d$-dimensional distribution function and $\ell\in\{1,\ldots,d-1\}$. If the $\ell$-th order partial derivatives exist and are continuous then the function
		\begin{align}\label{eq:lth:derivative}
			F_\mathbf{x}(\mathbf{y}) = \frac{\partial^\ell H(\mathbf{x},\mathbf{y})}{\partial x_1 \partial x_2 \cdots \partial x_\ell}
		\end{align}
		is $(d-\ell)$-increasing.
	\end{Lemma}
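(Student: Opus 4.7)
The plan is to exhibit $V_{F_\mathbf{x}}((\mathbf{c},\mathbf{d}])$ as the limit of $d$-volumes of $H$, each of which is already non-negative by $d$-increasingness of $H$. Fix $\mathbf{c},\mathbf{d} \in \mathbb{R}^{d-\ell}$ with $c_j \leq d_j$ and fix $\mathbf{x} \in \mathbb{R}^\ell$. For a small increment vector $\mathbf{h}=(h_1,\ldots,h_\ell)$ with $h_i>0$, consider the rectangle $R_\mathbf{h} := (x_1,x_1+h_1]\times\cdots\times(x_\ell,x_\ell+h_\ell]\times(\mathbf{c},\mathbf{d}]\subset \mathbb{R}^d$. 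Since $H$ is $d$-increasing we have $V_H(R_\mathbf{h})\geq 0$.

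Using the finite-difference representation of the volume and the fact that the operators $\Delta^j_{x_j,x_j+h_j}$ (for $j\leq\ell$) and $\Delta^k_{c_{k-\ell},d_{k-\ell}}$ (for $k>\ell$) act on disjoint coordinate blocks and therefore commute, I would write
\begin{align*}
V_H(R_\mathbf{h}) = \Delta^\ell_{x_\ell,x_\ell+h_\ell}\cdots\Delta^1_{x_1,x_1+h_1}\,G_{\mathbf{c},\mathbf{d}}(\mathbf{s}),
\end{align*}
where $G_{\mathbf{c},\mathbf{d}}(\mathbf{s}) := \Delta^d_{c_{d-\ell},d_{d-\ell}}\cdots\Delta^{\ell+1}_{c_1,d_1}H(\mathbf{s},\mathbf{t}) = \sum_{\mathbf{v}\in\mathrm{ver}((\mathbf{c},\mathbf{d}])}\mathrm{sign}(\mathbf{v})H(\mathbf{s},\mathbf{v})$.

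Next I would divide by $h_1\cdots h_\ell>0$ and let $h_i\to 0^+$ one coordinate at a time. By continuity of the $\ell$-th order mixed partial derivatives of $H$ with respect to $x_1,\ldots,x_\ell$, an iterated mean-value/Taylor argument (applied inside each of the finitely many summands defining $G_{\mathbf{c},\mathbf{d}}$) gives
\begin{align*}
\lim_{\mathbf{h}\to\mathbf{0}^+}\frac{V_H(R_\mathbf{h})}{h_1\cdots h_\ell}
=\frac{\partial^\ell G_{\mathbf{c},\mathbf{d}}(\mathbf{x})}{\partial x_1\cdots\partial x_\ell}
=\sum_{\mathbf{v}\in\mathrm{ver}((\mathbf{c},\mathbf{d}])}\mathrm{sign}(\mathbf{v})\,\frac{\partial^\ell H(\mathbf{x},\mathbf{v})}{\partial x_1\cdots\partial x_\ell}
= V_{F_\mathbf{x}}((\mathbf{c},\mathbf{d}]).
\end{align*}
Since the left-hand side is a limit of non-negative quantities, we conclude $V_{F_\mathbf{x}}((\mathbf{c},\mathbf{d}])\geq 0$, establishing $(d-\ell)$-increasingness.

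The only subtle step is justifying that the limit of the normalized difference quotients equals the $\ell$-fold partial derivative applied term-by-term inside the alternating sum $G_{\mathbf{c},\mathbf{d}}$. This is where the hypothesis of continuous $\ell$-th order partial derivatives is essential: it lets one apply the mean-value theorem $\ell$ times successively to each of the $2^{d-\ell}$ summands, with the intermediate evaluation points converging to $\mathbf{x}$ and the partial derivatives remaining continuous. I expect no other obstacles, as everything else is bookkeeping with the finite-difference operator.
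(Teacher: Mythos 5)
Your proposal is correct and uses essentially the same argument as the paper: both realize $V_{F_\mathbf{x}}((\mathbf{c},\mathbf{d}])$ as the limit, as the increments tend to $0^+$, of the normalized $d$-volume $V_H(R_\mathbf{h})/(h_1\cdots h_\ell)$ of a shrinking $d$-box, and then invoke $d$-increasingness of $H$ together with commutativity of the finite-difference operators. The paper is slightly terser (it uses a single increment $h$ for all $\ell$ coordinates and commutes the $\Delta$-operators with the limit without comment), whereas you use componentwise increments and spell out the mean-value/continuity justification, but the underlying idea is identical.
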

	\begin{proof}
		Applying the notation $\mathbf{s}_{i:j} = (s_i, s_{i+1},\ldots, s_j)$ for $i,j\in\mathbb{N}$ with $i<j$ we fix $\mathbf{a}_{\ell+1:d}, \mathbf{b}_{\ell+1:d}$ such that $b_i\geq a_i$ for $i\in\{\ell+1,\ldots, d\}$. 
		
		Using the expression in terms of the finite difference operator and its relation to partial derivatives we get
		\begin{align*}
			V_{F_\mathbf{x}}((\mathbf{a}_{\ell+1:d},\mathbf{b}_{\ell+1:d}]) &= \Delta^d_{a_d,b_d} \cdots \Delta^{\ell+1}_{a_{\ell+1},b_{\ell+1}} F_\mathbf{x}(\mathbf{s}) \\
			&= \Delta^d_{a_d,b_d} \cdots \Delta^{\ell+1}_{a_{\ell+1},b_{\ell+1}} \lim_{h\downarrow 0} \frac{1}{h^\ell} \Delta^\ell_{x_\ell + h, x_\ell} \cdots \Delta^1_{x_1+h,x_1} H(\mathbf{x},\mathbf{s}) \\
			&= \lim_{h\downarrow 0} \frac{1}{h^\ell} \underbrace{V_H((\mathbf{a},\mathbf{b}])}_{\geq 0} \geq 0,
		\end{align*}
		where for $h>0$ we set $\mathbf{a} = (x_1,\ldots,x_\ell, a_{\ell+1},a_{\ell+2},\ldots, a_d)$ and $\mathbf{b} = (x_1+h,\ldots,x_\ell+h, b_{\ell+1},b_{\ell+2},\ldots, b_d)$.	
	\end{proof}
	
	\section{Auxiliary results for multivariate copulas}\label{sec:aux:copulas}
	
	Instead of general multivariate distributions, this sections considers $d$-dimensional copulas and provides useful lemmata which are relevant for our main results.

	\begin{Lemma}\label{lem:abs:cts:kernel}
		Suppose $C$ is a $d$-dimensional copula and denote by $c^{1:d}$ ($c^{1:\ell}$) the Radon-Nikodym derivative of $\mu_C$ w.r.t. $\lambda_d$ ($\mu_{C^{1:\ell}}$ w.r.t. $\lambda_\ell$). Then there exists a set $\Lambda\in\mathcal{B}(\mathbb{I}^\ell)$ with $\lambda_\ell(\Lambda) = 1$ such that for all $\mathbf{x}\in\Lambda$ we have
		\begin{align*}
			K_C^\text{abs}(\mathbf{x}, F) = \int_F \frac{c^{1:d}(\mathbf{x},\mathbf{y})}{c^{1:\ell}(\mathbf{x})} \ \mathrm{d}\lambda_{d-\ell}(\mathbf{y}).
		\end{align*}
	\end{Lemma}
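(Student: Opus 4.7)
My approach centers on the uniqueness of the Lebesgue decomposition, applied simultaneously to the joint measure $\mu_C$ on $\mathbb{I}^d$ and to the fibrewise Lebesgue decomposition $K_C(\mathbf{x},\cdot) = K_C^{\text{abs}}(\mathbf{x},\cdot) + K_C^{\text{sing}}(\mathbf{x},\cdot)$ from \cite{Lange}. The idea is to integrate this fibrewise decomposition against the absolutely continuous marginal $\mu_{C^{1:\ell}} = c^{1:\ell}\lambda_\ell$, show that the two resulting summands are themselves the absolutely continuous and singular parts of $\mu_C$ w.r.t.\ $\lambda_d$, and then read off the density of $K_C^{\text{abs}}(\mathbf{x},\cdot)$ by comparison with $c^{1:d}$.

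Concretely, invoking \cite{Lange} I would fix a jointly measurable $k^{\text{abs}}:\mathbb{I}^d\to[0,\infty)$ satisfying $K_C^{\text{abs}}(\mathbf{x},F) = \int_F k^{\text{abs}}(\mathbf{x},\mathbf{y})\,\mathrm{d}\lambda_{d-\ell}(\mathbf{y})$ together with a jointly measurable set $N\in\mathcal{B}(\mathbb{I}^d)$ such that $\lambda_{d-\ell}(N_\mathbf{x}) = 0$ and $K_C^{\text{sing}}(\mathbf{x},\mathbb{I}^{d-\ell}\setminus N_\mathbf{x}) = 0$ for $\mu_{C^{1:\ell}}$-a.e.\ $\mathbf{x}$. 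Disintegration and Fubini--Tonelli applied to the absolutely continuous piece yield, for every $A\in\mathcal{B}(\mathbb{I}^d)$,
\[
\mu_C(A) = \int_A k^{\text{abs}}(\mathbf{x},\mathbf{y})\,c^{1:\ell}(\mathbf{x})\,\mathrm{d}\lambda_d(\mathbf{x},\mathbf{y}) + \int_{\mathbb{I}^\ell} K_C^{\text{sing}}(\mathbf{x}, A_\mathbf{x})\,c^{1:\ell}(\mathbf{x})\,\mathrm{d}\lambda_\ell(\mathbf{x}).
\]
The first summand is absolutely continuous w.r.t.\ $\lambda_d$ by inspection, while the second concentrates on the set $N$ which satisfies $\lambda_d(N)=0$ by Fubini, hence is singular w.r.t.\ $\lambda_d$.

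Uniqueness of the Lebesgue decomposition of $\mu_C$ then forces $c^{1:d}(\mathbf{x},\mathbf{y}) = k^{\text{abs}}(\mathbf{x},\mathbf{y})\,c^{1:\ell}(\mathbf{x})$ for $\lambda_d$-a.e.\ $(\mathbf{x},\mathbf{y})$. A final Fubini argument extracts a set $\Lambda\in\mathcal{B}(\mathbb{I}^\ell)$ of full $\lambda_\ell$-measure on which this identity persists for $\lambda_{d-\ell}$-a.e.\ $\mathbf{y}$; after discarding the $\lambda_\ell$-null set where $c^{1:\ell}$ vanishes (and using the convention $0/0=0$) and integrating over $F$, the claim follows. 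The main technical obstacle is the \emph{joint} measurability of the fibrewise densities and of the supports of the singular parts --- without it, the individual $\lambda_{d-\ell}$-null sets supporting the various $K_C^{\text{sing}}(\mathbf{x},\cdot)$ could not be assembled into a single $\lambda_d$-null set, and the singularity of the second summand above would fail. This is precisely what the fibrewise Lebesgue decomposition of \cite{Lange} delivers.
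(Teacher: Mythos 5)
Your proposal is correct in spirit and takes a genuinely different route from the paper's. You invoke the fibrewise Lebesgue decomposition of \cite{Lange} to obtain a jointly measurable density $k^{\text{abs}}$ and a jointly measurable carrier set $N$ for the singular parts, then apply Fubini twice: once to see that $\int_A k^{\text{abs}}(\mathbf{x},\mathbf{y})c^{1:\ell}(\mathbf{x})\,\mathrm{d}\lambda_d$ is absolutely continuous and once to see that $\lambda_d(N)=0$, whence the remaining summand is singular, and uniqueness of the Lebesgue decomposition closes the argument. The paper's proof takes a more roundabout path precisely to avoid needing the jointly measurable carrier set: it decomposes $\mu_{C^{1:\ell}}$ into absolutely continuous and singular parts, examines four cross-terms (a)--(d), and for the problematic term $\Gamma_c(E\times F)=\int_E K_C^{\text{sing}}(\mathbf{x},F)\,c^{1:\ell}(\mathbf{x})\,\mathrm{d}\lambda_\ell$ shows singularity not by producing a $\lambda_d$-null carrier, but by disintegrating $\Gamma_c$ a second time, identifying its Markov kernel with the normalization of $K_C^{\text{sing}}$, and comparing against its own Lebesgue decomposition to force the density $g^{1:d}$ to vanish. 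Your argument is shorter, but it shifts the weight onto the existence of the jointly measurable null set $N$: \cite{Lange} guarantees that $K_C^{\text{abs}}$ and $K_C^{\text{sing}}$ are subkernels (measurable in $\mathbf{x}$ for each fixed $E$), and joint measurability of $k^{\text{abs}}$ then follows by standard arguments, but the jointly measurable carrier $N$ for the singular parts is a further step --- it can be manufactured on $\mathbb{R}^{d-\ell}$ via the Besicovitch differentiation theorem by setting $N=\{(\mathbf{x},\mathbf{y}):\liminf_{r\downarrow 0}K_C(\mathbf{x},B_r(\mathbf{y}))/\lambda_{d-\ell}(B_r(\mathbf{y}))=\infty\}$, but you should say so explicitly rather than attributing it to \cite{Lange} outright, since the paper's author evidently did not want to rely on it. Two smaller points: your carrier-set property is stated \emph{for $\mu_{C^{1:\ell}}$-a.e.\ $\mathbf{x}$}, but the Fubini step $\lambda_d(N)=0$ requires $\lambda_{d-\ell}(N_\mathbf{x})=0$ for $\lambda_\ell$-a.e.\ (indeed, all) $\mathbf{x}$, which the Lange/Besicovitch construction does supply, so the qualifier should be tightened; and you tacitly assume $\mu_{C^{1:\ell}}=c^{1:\ell}\lambda_\ell$, an assumption the paper also effectively makes (and which is harmless in the Archimedean application) but which the paper's four-term decomposition at least nominally accommodates more generally.
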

	\begin{proof}
		Using Lebesgue decomposition of $\mu_C$, disintegration and absolute continuity, on the one hand we have
		\begin{align}\label{eq:abs:kernel:1}
			\mu_C(E\times F) &= \mu_C^\text{abs}(E\times F) + \mu_C^\text{sing}(E\times F) \\
			&= \int_E\int_F c^{1:d}(\mathbf{x},\mathbf{y}) \ \mathrm{d}\lambda_{d-\ell}(\mathbf{y})\mathrm{d}\lambda_\ell(\mathbf{x}) + \int_E K_{C^\text{sing}}(\mathbf{x},F) \ \mathrm{d}(\mu_C^\text{sing})^{1:\ell} \nonumber
		\end{align}
		and, on the other hand, 
		\begin{align}\label{eq:abs:kernel:2}
			\mu_C(E\times F) &= \int_E K_C(\mathbf{x}, F) \ \mathrm{d}\mu_{C^{1:\ell}}(\mathbf{x}) \\
			&= \int_E K_C^\text{abs}(\mathbf{x}, F) \ \mathrm{d}\mu_{C^{1:\ell}}(\mathbf{x}) + \int_E K_C^\text{sing}(\mathbf{x}, F) \ \mathrm{d}\mu_{C^{1:\ell}}(\mathbf{x}) \nonumber  \\
			&= \int_E K_C^\text{abs}(\mathbf{x}, F) \ \mathrm{d}(\mu_{C^{1:\ell}})^\text{abs}(\mathbf{x}) + \int_E K_C^\text{abs}(\mathbf{x}, F) \ \mathrm{d}(\mu_{C^{1:\ell}})^\text{sing}(\mathbf{x}) \nonumber \\ 
			&\qquad + \int_E K_C^\text{sing}(\mathbf{x}, F) \ \mathrm{d}(\mu_{C^{1:\ell}})^\text{abs}(\mathbf{x}) + \int_E K_C^\text{sing}(\mathbf{x}, F) \ \mathrm{d}(\mu_{C^{1:\ell}})^\text{sing}(\mathbf{x}) \nonumber\\
			&= \underbrace{\int_E K_C^\text{abs}(\mathbf{x}, F) c^{1:\ell}(\mathbf{x}) \ \mathrm{d}\lambda_\ell(\mathbf{x})}_{(a)} + \underbrace{\int_E K_C^\text{abs}(\mathbf{x}, F) \ \mathrm{d}(\mu_{C^{1:\ell}})^\text{sing}(\mathbf{x})}_{(b)} \nonumber\\ 
			&\qquad + \underbrace{\int_E K_C^\text{sing}(\mathbf{x}, F)c^{1:\ell}(\mathbf{x}) \ \mathrm{d}\lambda_\ell(\mathbf{x})}_{(c)} + \underbrace{\int_E K_C^\text{sing}(\mathbf{x}, F) \ \mathrm{d}(\mu_{C^{1:\ell}})^\text{sing}(\mathbf{x})}_{(d)} \nonumber
		\end{align}
		via disintegration, decomposition of the Markov kernel and then Lebesgue decomposition of the marginal measure $\mu_{C^{1:\ell}}$. Notice additionally that we have both
		\begin{align*}
			\mu_C(E\times\mathbb{I}^{d-\ell}) &= \mu_{C^{1:\ell}}(E) = \mu_{C^{1:\ell}}^\text{abs}(E) + \mu_{C^{1:\ell}}^\text{sing}(E), \\
			\mu_C(E\times\mathbb{I}^{d-\ell}) &= \mu_C^\text{abs}(E\times\mathbb{I}^{d-\ell}) + \mu_C^\text{sing}(E\times\mathbb{I}^{d-\ell}) = (\mu_C^\text{abs})^{1:\ell}(E) + (\mu_C^\text{sing})^{1:\ell}(E)
		\end{align*}
		and therefore $\mu_{C^{1:\ell}}^\text{abs}(E) = (\mu_C^\text{abs})^{1:\ell}(E)$ as well as $\mu_{C^{1:\ell}}^\text{sing}(E) = (\mu_C^\text{sing})^{1:\ell}(E)$. \\
		(a): The measure $\Gamma_\text{a}(E\times F) := \int_E K_C^\text{abs}(\mathbf{x}, F) c^{1:\ell}(\mathbf{x}) \ \mathrm{d}\lambda_\ell(\mathbf{x})$ is absolutely continuous w.r.t. $\lambda_{d-\ell}$ as every $\lambda_{d-\ell}$-null set is also a $\Gamma_\text{a}$-null set. \\ (b): Defining $\Gamma_\text{b}(E\times F) := \int_E K_C^\text{abs}(\mathbf{x}, F) \ \mathrm{d}(\mu_{C^{1:\ell}})^\text{sing}(\mathbf{x})$ there exists a set $S\in\mathcal{B}(\mathbb{I}^\ell)$ such that $\mu_{C^{1:\ell}}^\text{sing}(S) = 1$ while $\lambda_\ell(S)=0$. Considering that $K_C^\text{abs}$ is a subkernel (i.e., $K_C^\text{abs}(\mathbf{x}, \mathbb{I}^{d-\ell}) \leq 1$) we have
		\begin{align*}
			\Gamma_\text{b}(\mathbb{I}^d) &= \int_{\mathbb{I}^\ell} K_C^\text{abs}(\mathbf{x}, \mathbb{I}^{d-\ell}) \ \mathrm{d}\mu_{C^{1:\ell}}^\text{sing}(\mathbf{x}) = \int_{S} K_C^\text{abs}(\mathbf{x}, \mathbb{I}^{d-\ell}) \ \mathrm{d}\mu_{C^{1:\ell}}^\text{sing}(\mathbf{x}) = \parallel \Gamma_\text{b}\parallel = \Gamma_\text{b}(S\times\mathbb{I}^{d-\ell})
		\end{align*}
		whereas
		\begin{align*}
			\lambda_d(S\times\mathbb{I}^{d-\ell}) = \int_S \lambda_{d-\ell}(\mathbb{I}^{d-\ell}) \ \mathrm{d}\lambda_\ell = 0.
		\end{align*}
		The same argument yields that the measure defined according to (d) is singular w.r.t. $\lambda_{d}$, too. \\ 
		For part (c) we start by defining $\Gamma_\text{c}(E\times F) := \int_E K_C^\text{sing}(\mathbf{x}, F) c^{1:\ell}(\mathbf{x}) \ \mathrm{d}\lambda_\ell(\mathbf{x})$. Observe that the marginal $\Gamma_\text{c}^{1:\ell}$ is absolutely continuous w.r.t. $\lambda_\ell$. In fact,
		\begin{align*}
			\Gamma_\text{c}^{1:\ell}(E) = \Gamma_\text{c}(E\times\mathbb{I}^{d-\ell}) = \int_E K_C^\text{sing}(\mathbf{x}, \mathbb{I}^{d-\ell}) c^{1:\ell}(\mathbf{x}) \ \mathrm{d}\lambda_\ell(\mathbf{x})
		\end{align*}
		so disintegration yields
		\begin{align*}
			\Gamma_\text{c}(E\times F) = \int_E K_{\Gamma_\text{c}}(\mathbf{x}, F) \ \mathrm{d}\Gamma_\text{c}^{1:\ell}(\mathbf{x}) = \int_E  K_{\Gamma_\text{c}}(\mathbf{x}, F) K_C^\text{sing}(\mathbf{x},\mathbb{I}^{d-\ell}) c^{1:\ell}(\mathbf{x}) \ \mathrm{d}\lambda_\ell(\mathbf{x}).
		\end{align*}
		Consequently, there exists a set $\Upsilon_1\in\mathcal{B}(\mathbb{I}^\ell)$ with $\lambda_\ell(\Upsilon_1) = 1$ such that for all $\mathbf{x}\in\Upsilon_1$ we have
		\begin{align*}
			K_C^\text{sing}(\mathbf{x}, F) c^{1:\ell}(\mathbf{x}) = K_{\Gamma_\text{c}}(\mathbf{x}, F) K_C^\text{sing}(\mathbf{x},\mathbb{I}^{d-\ell}) c^{1:\ell}(\mathbf{x})
		\end{align*}
		and hence
		\begin{align*}
			K_{\Gamma_\text{c}}(\mathbf{x}, F) = \frac{K_C^\text{sing}(\mathbf{x}, F)}{K_C^\text{sing}(\mathbf{x},\mathbb{I}^{d-\ell})}
		\end{align*}
		which is the normalization of the subkernel. It directly follows that $K_{\Gamma_\text{c}}(\mathbf{x},\cdot)$ is singular w.r.t. $\lambda_{d-\ell}$ for all $\mathbf{x}\in\Upsilon_1$. We now proceed as before and decompose $\Gamma_\text{c}$ twice:
		\begin{align*}
			\Gamma_\text{c}(E\times F) &= \Gamma_\text{c}^\text{abs}(E\times F) + \Gamma_\text{c}^\text{sing}(E\times F) \\
			&= \int_E \int_F g^{1:d}(\mathbf{x},\mathbf{y}) \ \mathrm{d}\lambda_{d-\ell}(\mathbf{y})\mathrm{d}\lambda_\ell(\mathbf{x}) + \int_E K_{\Gamma_\text{c}^\text{sing}}(\mathbf{x}, F) \ \mathrm{d}\Gamma_\text{c}^{1:\ell}(\mathbf{x}) \\
			&= \int_E \int_F g^{1:d}(\mathbf{x},\mathbf{y}) \ \mathrm{d}\lambda_{d-\ell}(\mathbf{y})\mathrm{d}\lambda_\ell(\mathbf{x}) \\
			&\qquad + \int_E K_{\Gamma_\text{c}^\text{sing}}(\mathbf{x}, F)  K_C^\text{sing}(\mathbf{x},\mathbb{I}^{d-\ell})c^{1:\ell}(\mathbf{x}) \ \mathrm{d}\lambda_\ell(\mathbf{x}),
		\end{align*}
		where $g^{1:d}$ denotes the Radon-Nikodym density of $\Gamma_\text{c}^\text{abs}$ w.r.t. $\lambda_d$. Additionally,
		\begin{align*}
			\Gamma_\text{c}(E\times F) &= \int_E K_{\Gamma_\text{c}}(\mathbf{x}, F) \ \mathrm{d}\Gamma_\text{c}^{1:\ell}(\mathbf{x}) \\
			&= \int_E \frac{K_C^\text{sing}(\mathbf{x}, F)}{K_C^\text{sing}(\mathbf{x},\mathbb{I}^{d-\ell})} \cdot K_C^\text{sing}(\mathbf{x},\mathbb{I}^{d-\ell})c^{1:\ell}(\mathbf{x}) \ \mathrm{d}\lambda_\ell(\mathbf{x}).
		\end{align*}
		Consequently, there exists a set $\Upsilon_2\in\mathcal{B}(\mathbb{I}^\ell)$ with $\lambda_\ell(\Upsilon_2) = 1$ such that for all $\mathbf{x}\in\Upsilon_2$ we have
		\begin{align*}
			& K_C^\text{sing}(\mathbf{x}, F)c^{1:\ell}(\mathbf{x}) = \int_F g^{1:d}(\mathbf{x},\mathbf{y}) \ \mathrm{d}\lambda_{d-\ell}(\mathbf{y}) + K_{\Gamma_\text{c}^\text{sing}}(\mathbf{x}, F) \cdot K_C^\text{sing}(\mathbf{x},\mathbb{I}^{d-\ell})c^{1:\ell}(\mathbf{x}) 	
		\end{align*}
		and considering that $\{\mathbf{x}\in\mathbb{I}^\ell: c^{1:\ell}(\mathbf{x})>0\}$ is of full $\mu_{C^{1:\ell}}$-measure and by absolute continuity of full $\lambda_\ell$-measure too, yields the existence of a set $\Upsilon_3\in\mathcal{B}(\mathbb{I}^\ell)$ with $\lambda_\ell(\Upsilon_3) = 1$ such that for all $\mathbf{x}\in\Upsilon_3$ we have
		\begin{align*}
			\frac{K_C^\text{sing}(\mathbf{x}, F)}{K_C^\text{sing}(\mathbf{x},\mathbb{I}^{d-\ell})} = \int_F \frac{g^{1:d}(\mathbf{x},\mathbf{y})}{c^{1:\ell}(\mathbf{x})K_C^\text{sing}(\mathbf{x},\mathbb{I}^{d-\ell})} \ \mathrm{d}\lambda_{d-\ell}(\mathbf{y}) + K_{\Gamma_\text{c}^\text{sing}}(\mathbf{x}, F).
		\end{align*}
		However, we already know that $K_{\Gamma_\text{c}}^\text{sing}(\mathbf{x}, F) = K_{\Gamma_\text{c}}(\mathbf{x},F) = \frac{K_C^\text{sing}(\mathbf{x}, F)}{K_C^\text{sing}(\mathbf{x},\mathbb{I}^{d-\ell})}$ so that
		\begin{align*}
			\int_F \frac{g^{1:d}(\mathbf{x},\mathbf{y})}{c^{1:\ell}(\mathbf{x})K_C^\text{sing}(\mathbf{x},\mathbb{I}^{d-\ell})} \ \mathrm{d}\lambda_{d-\ell}(\mathbf{y}) = 0
		\end{align*}
		and therefore $\Gamma_\text{c}^\text{abs}(E\times F) = 0$ holds which implies that $\Gamma_\text{c}$ is purely singular. \\
		Having this we can finally compare the two absolutely continuous components of eq. \eqref{eq:abs:kernel:1} and eq. \eqref{eq:abs:kernel:2} so that for $\mu_C(E\times F)$ we get the existence of a set $\Lambda\in\mathcal{B}(\mathbb{I}^\ell)$ with $\lambda_\ell(\Lambda) = 1$ such that for all $\mathbf{x}\in\Lambda$ we have
		\begin{align*}
			\int_F c^{1:d}(\mathbf{x},\mathbf{y}) \ \mathrm{d}\lambda_{d-\ell}(\mathbf{y}) = K_C^\text{abs}(\mathbf{x}, F) c^{1:\ell}(\mathbf{x})
		\end{align*}
		from which the assertion follows.
	\end{proof}
	
	The subsequent lemma concerns the approximation of $\zeta_1^d$. Recall that the total variation metric of two probability measures $\mu,\nu$ on $(\Omega, \mathcal{A})$ is given by $d_{TV}(\mu, \nu) = \sup_{A\in\mathcal{A}}|\mu(A) - \nu(A)|$ and that it is well-known that for absolutely continuous probability measures, convergence w.r.t. $d_{TV}$ is equivalent to a.e.-convergence of the corresponding densities (see, e.g., \cite{total_var_lecture_notes}).
	\begin{Lemma}\label{lem:conv:cond:zeta}
		Suppose $C,C_1,C_2,\ldots$ are $d$-dimensional copulas with $(d-1)$-Markov kernels $K_C, K_{C_1}, K_{C_2},\ldots$, respectively. If $(C_n)_{n\in\mathbb{N}}$ converges $(d-1)$-weakly conditional to $C$ and $(\mu_{C_n^{1:d-1}})_{n\in\mathbb{N}}$ converges in total variation to $\mu_{C^{1:d-1}}$ then
		\begin{align*}
			\lim_{n\to\infty}\zeta_1^d(C_n) = \zeta_1^d(C).
		\end{align*}
	\end{Lemma}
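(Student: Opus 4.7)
The plan is to decompose $|\zeta_1^d(C_n) - \zeta_1^d(C)|$ via the triangle inequality into one piece that still integrates against the \emph{same} marginal measure $\mu_{C^{1:d-1}}$ and a second piece measuring the cost of swapping $d\mu_{C_n^{1:d-1}}$ for $d\mu_{C^{1:d-1}}$, and then to drive each part to zero separately. Setting $f_n(\mathbf{s},y) := |K_{C_n}(\mathbf{s},[0,y]) - y|$ and $f(\mathbf{s},y) := |K_C(\mathbf{s},[0,y]) - y|$, both bounded by $1$, I would start from
\begin{align*}
\tfrac{1}{3}\bigl|\zeta_1^d(C_n) - \zeta_1^d(C)\bigr| &\leq \int_\mathbb{I} \left| \int_{\mathbb{I}^{d-1}} f_n(\mathbf{s},y) \, d(\mu_{C_n^{1:d-1}} - \mu_{C^{1:d-1}})(\mathbf{s}) \right| d\lambda(y) \\
&\quad + \int_\mathbb{I} \int_{\mathbb{I}^{d-1}} |f_n(\mathbf{s},y) - f(\mathbf{s},y)| \, d\mu_{C^{1:d-1}}(\mathbf{s}) \, d\lambda(y).
\end{align*}

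For the first summand I would invoke the dual characterization of total variation: for a signed measure $\mu - \nu$ and any bounded measurable $g$ one has $\bigl|\int g \, d(\mu - \nu)\bigr| \leq 2 \|g\|_\infty \, d_{TV}(\mu, \nu)$. Applied with $g = f_n(\cdot, y)$ (bounded by $1$ uniformly in $y$) and the hypothesis that $d_{TV}(\mu_{C_n^{1:d-1}}, \mu_{C^{1:d-1}}) \to 0$, the inner integral is bounded uniformly in $y$ by $2\, d_{TV}(\mu_{C_n^{1:d-1}}, \mu_{C^{1:d-1}}) \to 0$, so bounded convergence on $y \in \mathbb{I}$ forces the whole first summand to vanish.

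For the second summand I would exploit $(d-1)$-weak conditional convergence: there exists a set $\Lambda \in \mathcal{B}(\mathbb{I}^{d-1})$ with $\mu_{C^{1:d-1}}(\Lambda) = 1$ such that $K_{C_n}(\mathbf{s},\cdot)$ converges weakly to $K_C(\mathbf{s},\cdot)$ for every $\mathbf{s} \in \Lambda$. By the Portmanteau theorem the associated one-dimensional distribution functions satisfy $K_{C_n}(\mathbf{s},[0,y]) \to K_C(\mathbf{s},[0,y])$ at every continuity point $y$ of the limit, and since such a distribution function has at most countably many jump discontinuities, convergence holds for $\lambda$-a.e. $y$. Hence $f_n \to f$ pointwise on a set of full $(\mu_{C^{1:d-1}} \otimes \lambda)$-measure, and since $|f_n - f| \leq 2$ is integrable against the product probability measure, dominated convergence drives the second summand to $0$ as well, completing the argument.

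The main obstacle I anticipate is precisely the $y$-dependence in the first summand: one needs a bound on the inner integral that does \emph{not} deteriorate with $y$ so that the outer $\lambda$-integration can be handled uniformly. The total-variation characterization supplies exactly such a $y$-free bound, which is the reason the hypothesis is phrased in terms of $d_{TV}$ rather than mere weak convergence of the marginals; everything else is routine application of Portmanteau plus dominated convergence.
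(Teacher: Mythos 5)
Your proposal is correct and takes essentially the same approach as the paper: the paper likewise inserts the intermediate term $\int_\mathbb{I}\int_{\mathbb{I}^{d-1}}|\beta_n|\,d\mu_{C^{1:d-1}}\,d\lambda$ (with $\beta_n$ your $f_n$ before taking absolute values), controls the measure-swap summand via the duality $d_{TV}(\mu,\nu)=\tfrac12\sup_{\|g\|_\infty\le 1}|\int g\,d\mu-\int g\,d\nu|$, and handles the kernel-swap summand by $(d-1)$-weak conditional convergence at a.e.\ continuity point combined with dominated convergence. The only cosmetic difference is that you spell out the Portmanteau step and the a.e.\ product-measure convergence explicitly, whereas the paper routes the second summand through a Fubini swap of the $y$- and $\mathbf{x}$-integrals before invoking dominated convergence; both variants are equivalent.
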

	\begin{proof}
		Defining $\beta_n(\mathbf{x},y) := K_{C_n}(\mathbf{x},[0,y])-y$, $\mathbf{x}\in\mathbb{I}^{d-1}, y\in\mathbb{I}$ (and $\beta$ analogously), we get
		\begin{align*}
			&\left|  \int_\mathbb{I}\int_{\mathbb{I}^{d-1}} \left| \beta_n \right| \ \mathrm{d}\mu_{C^{1:d-1}}\mathrm{d}\lambda - \int_\mathbb{I}\int_{\mathbb{I}^{d-1}} \left| \beta \right| \ \mathrm{d}\mu_{C^{1:d-1}}\mathrm{d}\lambda \right| \\
			&\leq \int_\mathbb{I} \left| \int_{\mathbb{I}^{d-1}} |\beta_n| \ \mathrm{d}\mu_{C_n^{1:d-1}} - \int_{\mathbb{I}^{d-1}} |\beta_n| \ \mathrm{d}\mu_{C^{1:d-1}} \right| \mathrm{d}\lambda\\
			& \qquad + \int_\mathbb{I} \left| \int_{\mathbb{I}^{d-1}} |\beta_n| \ \mathrm{d}\mu_{C^{1:d-1}} - \int_{\mathbb{I}^{d-1}} |\beta| \ \mathrm{d}\mu_{C^{1:d-1}} \right| \mathrm{d}\lambda
		\end{align*}
		For the first summand (I) we use the well-known fact (see, e.g., \cite{total_var_lecture_notes}) that $d_{TV}(C_n^{1:d-1}, C^{1:d-1})\xrightarrow{n\to\infty} 0$ if, and only if, 
		\begin{align*}
			\lim_{n\to\infty} \frac{1}{2}\sup_{\|g\|_\infty\leq 1} \left| \int_{\mathbb{I}^{d-1}} g \ \mathrm{d}\mu_{C_n^{1:d-1}} - \int_{\mathbb{I}^{d-1}} g \ \mathrm{d}\mu_{C^{1:d-1}} \right| = 0
		\end{align*}
		and estimate
		\begin{align*}
			(I) \leq \int_\mathbb{I} 2\cdot\frac{1}{2} \sup_{\|g\|_\infty\leq 1} \left| \int_{\mathbb{I}^{d-1}} g \ \mathrm{d}\mu_{C_n^{1:d-1}} - \int_{\mathbb{I}^{d-1}} g \ \mathrm{d}\mu_{C^{1:d-1}} \right| \xrightarrow{n\to\infty} 0.
		\end{align*}
		For the second summand (II) we compute
		\begin{align*}
			(II) &= \int_\mathbb{I} \left| \int_{\mathbb{I}^{d-1}} |\beta_n| - |\beta| \ \mathrm{d}\mu_{C^{1:d-1}} \right| \ \mathrm{d}\lambda \leq \int_\mathbb{I} \int_{\mathbb{I}^{d-1}} \left| |\beta_n| - |\beta| \right| \ \mathrm{d}\mu_{C^{1:d-1}} \mathrm{d}\lambda \\
			&\leq \int_\mathbb{I} \int_{\mathbb{I}^{d-1}} \left| \beta_n - \beta \right| \ \mathrm{d}\mu_{C^{1:d-1}} \mathrm{d}\lambda = \int_{\mathbb{I}^{d-1}} \int_\mathbb{I}\left| \beta_n - \beta \right| \ \mathrm{d}\lambda\mathrm{d}\mu_{C^{1:d-1}} \\
			&= \int_{\mathbb{I}^{d-1}} \int_\mathbb{I}\left| K_{C_n}(\mathbf{x},[0,y]) - K_C(\mathbf{x},[0,y]) \right| \ \mathrm{d}\lambda(y)\mathrm{d}\mu_{C^{1:d-1}}(\mathbf{x}).
		\end{align*}
		By $(d-1)$-weak conditional convergence there exists $\Lambda\in\mathcal{B}(\mathbb{I}^{d-1})$ with $\mu_{C^{1:d-1}}(\Lambda) = 1$ such that for all $\mathbf{x}\in\Lambda$ we have that $
		|K_{C_n}(\mathbf{x},[0,y]) - K_C(\mathbf{x},[0,y])| \to 0
		$ as $n\to\infty$ for every continuity point $y$ of $K_C(\mathbf{x},\cdot)$ so using Dominated Convergence it follows that (II) $\to 0$ as $n\to\infty$ which completes the proof.
	\end{proof}


	\newpage
	\bibliographystyle{Abbrv}

\end{document}